\documentclass[a4paper]{amsart}

\usepackage{setspace}
\usepackage[totalwidth=14cm,totalheight=22cm]{geometry}
\usepackage[T1]{fontenc}
\usepackage[dvips]{graphicx}
\usepackage{epsfig}
\usepackage{amsmath,amssymb,amscd,amsthm,nicefrac}
\usepackage{subfigure}
\usepackage[latin1]{inputenc}
\usepackage{latexsym}
\usepackage{graphics}
\usepackage{colortbl}
\usepackage{color}
\usepackage{ae}
\usepackage{enumitem}
\usepackage[all]{xy}
\usepackage[bf,small]{caption}
\usepackage{floatflt}
\usepackage{float}

\usepackage{chngcntr}
\counterwithin{figure}{section}

\DeclareMathOperator\arctanh{arctanh}

\makeatletter
\newtheorem*{rep@theorem}{\rep@title}
\newcommand{\newreptheorem}[2]{%
\newenvironment{rep#1}[1]{%
 \def\rep@title{#2 \ref{##1}}%
 \begin{rep@theorem}}%
 {\end{rep@theorem}}}
\makeatother

\makeatletter
\newtheorem*{rep@cor}{\rep@title}
\newcommand{\newrepcor}[2]{%
\newenvironment{rep#1}[1]{%
 \def\rep@title{#2 \ref{##1}}%
 \begin{rep@cor}}%
 {\end{rep@cor}}}
\makeatother

\makeatletter
\newtheorem*{rep@prop}{\rep@title}
\newcommand{\newrepprop}[2]{%
\newenvironment{rep#1}[1]{%
 \def\rep@title{#2 \ref{##1}}%
 \begin{rep@prop}}%
 {\end{rep@prop}}}
\makeatother

\newtheorem{cor}{Corollary}[section]
\newtheorem*{cor*}{Corollary}
\newtheorem{corx}{Corollary}

\newtheorem{theorem}[cor]{Theorem}
\newtheorem{thmx}{Theorem}

\newtheorem{prop}[cor]{Proposition}

\newrepcor{cor}{Corollary}
\newreptheorem{theorem}{Theorem}
\newrepprop{prop}{Proposition}

\newtheorem{lemma}[cor]{Lemma}

\theoremstyle{definition}
\newtheorem{defi}[cor]{Definition}
\theoremstyle{remark}
\newtheorem{remark}[cor]{Remark}

\newcommand*{\bullets}{\raisebox{-0.25ex}{\scalebox{1.5}{$\cdot$}}}

\newcommand{\C}{{\mathbb C}}

\newcommand{\R}{{\mathbb R}}

\newcommand{\Hyp}{\mathbb{H}}
\newcommand{\dS}{\mathrm{d}\mathbb{S}}

\newcommand{\grad}{\operatorname{grad}}
\newcommand{\isom}{\mathrm{Isom}}

\newcommand{\II}{I\hspace{-0.1cm}I}
\newcommand{\III}{I\hspace{-0.1cm}I\hspace{-0.1cm}I}

\newcommand{\rar}{\rightarrow}

\newcommand{\SO}{\mathrm{SO}}

\newcommand{\D}{\mathbb{D}}

\def\Hess{\mathrm{Hess}}

\def\Teich{\mathcal{T}}


\begin{document}

\setcounter{secnumdepth}{3}
\setcounter{tocdepth}{2}

\title{Minimal discs in hyperbolic space bounded by a quasicircle at infinity}

\author{Andrea Seppi} 
\address{A. Seppi: Dipartimento di Matematica ``Felice Casorati", Universit\`{a} degli Studi di Pavia, Via Ferrata 1, 27100, Pavia, Italy.} \email{andrea.seppi01@ateneopv.it}

\maketitle



\begin{abstract}
We prove that the supremum of principal curvatures of a minimal embedded disc in hyperbolic three-space spanning a quasicircle in the boundary at infinity is estimated in a sublinear way by the norm of the quasicircle in the sense of universal Teichm\"uller space, if the quasicircle is sufficiently close to being the boundary of a totally geodesic plane. As a by-product we prove that there is a universal constant C independent of the genus such that if the Teichm\"uller distance between the ends of a quasi-Fuchsian manifold $M$ is at most C, then $M$ is almost-Fuchsian. 
The main ingredients of the proofs are estimates on the convex hull of a minimal surface and Schauder-type estimates to control principal curvatures.
\end{abstract}


\section{Introduction}

Let $\Hyp^3$ be hyperbolic three-space and $\partial_\infty \Hyp^3$ be its boundary at infinity. A surface $S$ in hyperbolic space is minimal if its principal curvatures at every point $x$ have opposite values. We will denote the principal curvatures by $\lambda$ and $-\lambda$, where $\lambda=\lambda(x)$ is a nonnegative function on $S$. It was proved by Anderson (\cite[Theorem 4.1]{anderson}) that for every Jordan curve $\Gamma$ in $\partial_\infty \Hyp^3$ there exists a minimal embedded disc $S$ whose boundary at infinity coincides with $\Gamma$. 
It can be proved that if the supremum $||\lambda||_\infty$ of the principal curvatures of $S$ is in $(-1,1)$, then $\Gamma=\partial_\infty S$ is a quasicircle, namely $\Gamma$ is the image of a round circle under a quasiconformal map of the sphere at infinity.

However, uniqueness does not hold in general. 
Anderson proved the existence of a Jordan curve $\Gamma\subset\partial_\infty \Hyp^3$ invariant under the action of a quasi-Fuchsian group $G$ spanning several distinct minimal embedded discs, see \cite[Theorem 5.3]{anderson}. In this case, $\Gamma$ is a quasicircle and coincides with the limit set of $G$. More recently in \cite{zeno} invariant curves spanning an arbitrarily large number of minimal discs were constructed.
On the other hand, if the supremum of the principal curvatures of a minimal embedded disc $S$ satisfies $||\lambda||_\infty\in(-1,1)$ then, by an application of the maximum principle, $S$ is the unique minimal disc asymptotic to the quasicircle $\Gamma=\partial_\infty S$.

The aim of this paper is to study the supremum $||\lambda||_\infty$ of the principal curvatures of a minimal embedded disc, in relation with the norm of the quasicircle at infinity, in the sense of universal Teichm\"uller space. The relations we obtain are interesting for ``small'' quasicircles, that are close in universal Teichm\"uller space to a round circle. The main result of this paper is the following:

\begin{thmx} \label{hyperbolic close to identity}
There exist universal constants $K_0>1$ and $C>0$ such that every minimal embedded disc in $\Hyp^3$ with boundary at infinity a $K$-quasicircle $\Gamma\subset\partial_\infty\Hyp^3$, with $1\leq K\leq K_0$, has principal curvatures bounded by $$||\lambda||_\infty\leq C\log K\,.$$
\end{thmx}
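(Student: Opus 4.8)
The plan is to control the principal curvatures of a minimal disc $S$ by a combination of convex-hull geometry and interior Schauder estimates for the minimal surface equation, tracking carefully the dependence on the quasicircle norm. First I would recall that when $\Gamma$ is a $K$-quasicircle with $K$ close to $1$, the convex hull $CH(\Gamma)$ in $\Hyp^3$ is thin: its width (the maximal distance between the two boundary components, or equivalently the bending of the two pleated boundary surfaces) is bounded by a function $w(K)$ with $w(K)\to 0$ as $K\to 1$, and in fact $w(K)\leq C'\log K$ for $K$ near $1$. Since the minimal disc $S$ is contained in $CH(\Gamma)$, this gives a quantitative ``almost totally geodesic'' control: $S$ lies in a thin slab around a totally geodesic plane $P_0$, and one can write $S$ as a graph of a function $u$ over (a large subdomain of) $P_0$ with $\|u\|_{C^0}$ small, controlled by $w(K)$.

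Next I would set up the minimal surface equation for the graphing function $u$ over $P_0\cong\Hyp^2$ in suitable coordinates (e.g. the upper half-space model with $P_0$ a vertical plane, or normal coordinates based at a point of $S$). Minimality makes this a quasilinear uniformly elliptic second-order PDE whose coefficients depend on $u$ and $\nabla u$; the $C^0$ bound on $u$ from the convex hull estimate, together with a gradient estimate (obtained either from the maximum principle applied to the angle function between $S$ and $P_0$, or directly from the barrier given by the two boundary planes of the convex hull), yields interior $C^{1,\alpha}$ bounds. The principal curvatures are the eigenvalues of the shape operator, which is built from the second derivatives of $u$ (suitably normalized by the metric and the gradient), so it suffices to get an interior $C^2$ — really $C^{1,\alpha}$ on $\nabla u$, i.e. $C^{2,\alpha}$ on $u$ — estimate. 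Applying interior Schauder estimates to the minimal surface equation on a ball of fixed hyperbolic radius centered at an arbitrary point of $S$, one bounds $\|\nabla^2 u\|_{C^\alpha}$ in terms of $\|u\|_{C^{1,\alpha}}$ on a slightly larger ball and the $C^\alpha$ norm of the (inhomogeneous, in general $0$) right-hand side; the upshot is $\|\lambda\|_\infty\leq C''\,\|u\|_{C^0}\leq C''' w(K)\leq C\log K$. The point of passing through $u$ rather than estimating curvature directly is that curvature depends linearly (to leading order) on $u$ once we know $u$ and $\nabla u$ are small, so the smallness of $\|u\|_{C^0}$ transfers to smallness of $\|\lambda\|_\infty$.

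I expect the main obstacle to be making the Schauder step genuinely \emph{universal} and \emph{scale-invariant}: the minimal disc is noncompact (an infinite disc asymptotic to $\Gamma$), so the interior estimates must be applied at every point with constants independent of the point, which requires the coefficients of the PDE — hence the background geometry and the size of the coordinate chart — to be uniformly controlled along all of $S$. This is where the convex-hull thinness is used a second time, not just for the $C^0$ bound but to guarantee that around each point of $S$ there is a definite-size region over which $S$ is a graph with uniformly bounded $C^{1,\alpha}$ data, so that one genuinely extracts a uniform bound $\|\lambda\|_\infty\leq C\log K$ rather than merely local finiteness. A secondary technical point is the gradient estimate: one must rule out that $S$ becomes nearly vertical over $P_0$ somewhere in the interior, which again follows from the barrier provided by the two pleated boundary components of $CH(\Gamma)$ being $C^0$-close to $P_0$, but the argument needs the maximum principle applied to an appropriate geometric quantity (the ``verticality'' or the inner product of the two normals) that is subharmonic on the minimal surface.
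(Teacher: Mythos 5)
Your overall architecture (thin convex hull, pointwise elliptic estimates with constants independent of the point, curvature read off from second derivatives of a small function) is the same as the paper's, but two steps that you treat as available are in fact the substance of the proof. First, the inequality ``width of $\mathcal{CH}(\Gamma)\leq C'\log K$ for $K$ near $1$'' cannot simply be recalled: in the paper it is the output of the entire ``description from infinity'' step --- the foliation by equidistant (Epstein-type) surfaces with asymptotic boundary $\Gamma$, the identification of the traceless part of $\II^*$ with the real part of the Schwarzian derivative of the uniformizing map, the resulting estimate that the convex hull sits between two planes at distance $\arctanh(2\|\Psi\|_{\mathcal B})$, and only then the conversion from the Bers norm to $\log K$ via the locally bi-Lipschitz property of the Bers embedding. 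Unless you can point to a reference containing exactly this quantitative statement, your very first inequality is an unproved key input, not a recollection.

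Second, and more seriously, your graph representation and gradient estimate have a gap. Confinement in a slab of width $w$ gives only a $C^0$ bound on the graphing function; it does not prevent $S$ from turning vertical over $P_0$, and the maximum-principle argument you propose for the ``verticality'' is circular: for a Killing field $X$ of $\Hyp^3$ the function $v=\langle N,X\rangle$ satisfies a Jacobi-type equation $\Delta_S v+(\|B\|^2-2)v=0$, and the zeroth-order coefficient has the sign required by the maximum principle only when $|\lambda|<1$, which is essentially the conclusion you are trying to reach. Likewise, the claim that thinness guarantees ``a definite-size region over which $S$ is a graph with uniformly bounded $C^{1,\alpha}$ data'' is precisely the missing uniformity, not a consequence of thinness. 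The paper sidesteps both issues by never writing $S$ as a graph: it works intrinsically with $u=\sinh d_{\Hyp^3}(\cdot,P_-)$ restricted to $S$, which satisfies the linear equation $\Delta_S u-2u=0$ with the favorable sign regardless of the size of $B$, and it obtains uniform ellipticity and uniformly bounded coefficients of $\Delta_S$ in normal coordinates (together with a uniform curvature bound) through a compactness argument for conformal harmonic parametrizations of minimal discs spanning $K$-quasicircles with $K\leq K_0$ --- this is exactly where the hypothesis $K\leq K_0$ enters. To salvage your quasilinear-graph route you would need either that compactness argument or an independent a priori curvature/gradient estimate (stability-based estimates are not available, since the theorem concerns arbitrary minimal embedded discs, not only area-minimizing ones).
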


Recall that the minimal disc with prescribed quasicircle at infinity is unique if $||\lambda||_\infty<1$. Hence we can draw the following consequence, by choosing $K_0'<\min\{K_0,e^{1/C}\}$:

\begin{thmx} \label{theorem uniqueness}
There exists a universal constant $K'_0$ such that every $K$-quasicircle $\Gamma\subset\partial_\infty\Hyp^3$ with $K\leq K'_0$ is the boundary at infinity of a unique minimal embedded disc.
\end{thmx}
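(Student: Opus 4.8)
The plan is to derive Theorem \ref{theorem uniqueness} as an immediate consequence of Theorem \ref{hyperbolic close to identity}, combined with the uniqueness statement recalled in the introduction: if a minimal embedded disc $S$ in $\Hyp^3$ spanning a quasicircle $\Gamma$ has $\|\lambda\|_\infty < 1$, then $S$ is the unique minimal embedded disc asymptotic to $\Gamma$ (this follows from the maximum principle argument mentioned above, applied to two hypothetical competing minimal discs). So the only thing to arrange is that \emph{some} minimal disc with the required curvature bound exists for a given small quasicircle, and that \emph{every} such disc automatically satisfies $\|\lambda\|_\infty < 1$.

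\medskip

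First I would invoke Anderson's existence theorem (\cite[Theorem 4.1]{anderson}): given a $K$-quasicircle $\Gamma \subset \partial_\infty \Hyp^3$, which is in particular a Jordan curve, there exists at least one minimal embedded disc $S$ with $\partial_\infty S = \Gamma$. Second, I apply Theorem \ref{hyperbolic close to identity}: there are universal constants $K_0 > 1$ and $C > 0$ such that if $1 \le K \le K_0$, then \emph{any} minimal embedded disc with boundary at infinity a $K$-quasicircle has $\|\lambda\|_\infty \le C \log K$. Third, I choose the threshold. Set
\[
K'_0 := \min\{K_0,\ e^{1/C}\},
\]
or any value strictly smaller than this minimum if one wants strict inequality in what follows. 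If $1 \le K \le K'_0$, then $K \le K_0$ so Theorem \ref{hyperbolic close to identity} applies, and $C \log K \le C \log K'_0 \le C \cdot (1/C) = 1$; with the strict choice $K'_0 < e^{1/C}$ we get $C\log K < 1$, hence $\|\lambda\|_\infty < 1$ for every minimal embedded disc asymptotic to $\Gamma$. Fourth, by the uniqueness criterion recalled above, the disc produced by Anderson's theorem is the \emph{unique} minimal embedded disc with boundary at infinity $\Gamma$: any other such disc would also be forced to have $\|\lambda\|_\infty < 1$, and two distinct minimal discs both with principal curvatures of modulus less than $1$ contradict the maximum principle.

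\medskip

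There is essentially no obstacle here: the statement is a packaging of Theorem \ref{hyperbolic close to identity} together with two facts already stated in the introduction (Anderson's existence result and the maximum-principle uniqueness for $\|\lambda\|_\infty < 1$). The only point requiring a word of care is that the curvature bound in Theorem \ref{hyperbolic close to identity} must hold for \emph{every} minimal embedded disc with the given asymptotic boundary, not merely for the one Anderson constructs — but this is exactly how Theorem \ref{hyperbolic close to identity} is phrased, so the uniqueness conclusion follows without needing to know in advance that the minimal disc is unique. One should also note that $e^{1/C} > 1$ since $C > 0$, so the interval $[1, K'_0]$ is non-degenerate and the statement is non-vacuous.
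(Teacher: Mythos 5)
Your proposal is correct and coincides with the paper's own derivation: the author obtains Theorem \ref{theorem uniqueness} exactly by combining Theorem \ref{hyperbolic close to identity} with Anderson's existence theorem and the maximum-principle uniqueness criterion for $\|\lambda\|_\infty<1$, choosing $K_0'<\min\{K_0,e^{1/C}\}$ just as you do. Nothing further is needed.
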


\subsection*{Applications to quasi-Fuchsian manifolds}

Theorem \ref{hyperbolic close to identity} has a direct application to quasi-Fuchsian manifolds. Recall that a quasi-Fuchsian manifold $M$ is isometric to the quotient of $\Hyp^3$ by a quasi-Fuchsian group $G$, isomorphic to the fundamental group of a closed surface $\Sigma$, whose limit set is a Jordan curve $\Gamma$ in $\partial_\infty\Hyp^3$. The topology of $M$ is $\Sigma\times\R$. We denote by $\Omega_+$ and $\Omega_-$ the two connected components of $\partial_\infty\Hyp^3\setminus \Gamma$. Then $\Omega_{+}/G$ and $\Omega_{-}/G$ inherit natural structures of Riemann surfaces on $\Sigma$ and therefore determine two points of $\Teich(\Sigma)$, the Teichm\"uller space of $\Sigma$. Let $d_{\Teich(\Sigma)}$ denote the Teichm\"uller distance on $\Teich(\Sigma)$.

\begin{corx} \label{cor teich dist}
There exist universal constants $C>0$ and $d_0>0$ such that, for every quasi-Fuchsian manifold $M=\Hyp^3/G$ with $d_{\Teich(\Sigma)}(\Omega_+/G,\Omega_-/G)<d_0$ and every minimal surface $S$ in $M$ homotopic to $\Sigma\times\{0\}$, the supremum of the principal curvatures of $S$ satisfies:
$$||\lambda||_\infty\leq C d_{\Teich(\Sigma)}(\Omega_+/G,\Omega_-/G)\,.$$
\end{corx}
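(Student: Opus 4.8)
The plan is to deduce Corollary \ref{cor teich dist} from Theorem \ref{hyperbolic close to identity} by translating the Teichm\"uller distance between the two conformal ends of $M$ into a bound on the quasiconformal dilatation $K$ of the limit set $\Gamma$, and then lifting the minimal surface $S$ to a minimal disc in $\Hyp^3$.

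\begin{proof}[Proof proposal]
First I would lift everything to the universal cover. The minimal surface $S \subset M$ homotopic to $\Sigma \times \{0\}$ lifts to a $G$-invariant minimal embedded disc $\wti S$ in $\Hyp^3$, whose principal curvatures at corresponding points agree with those of $S$; hence $||\lambda||_\infty$ is the same computed in $M$ or in $\Hyp^3$. Since $S$ is homotopic to the convex core boundary, $\wti S$ is properly embedded and its boundary at infinity is exactly the limit set $\Gamma$ of $G$, which is a Jordan curve in $\partial_\infty \Hyp^3$. Thus, once we know $\Gamma$ is a $K$-quasicircle with $K \leq K_0$, Theorem \ref{hyperbolic close to identity} applies verbatim and gives $||\lambda||_\infty \leq C \log K$.

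The substantive step is to bound $K$, or rather $\log K$, by the Teichm\"uller distance $d := d_{\Teich(\Sigma)}(\Omega_+/G,\Omega_-/G)$. Here is where I would use the standard picture of quasi-Fuchsian space. The group $G$ is quasiconformally conjugate to a Fuchsian group $G_0$ uniformizing one of the two ends, say $\Omega_-/G$; equivalently, there is a quasiconformal map $f$ of $\partial_\infty \Hyp^3 \cong \cp$ conjugating $G_0$ to $G$, holomorphic on the lower hemisphere, and whose Beltrami coefficient on the upper hemisphere descends to a Beltrami differential on $\Omega_-/G \cong \Sigma$ representing the point $\Omega_+/G$ of $\Teich(\Sigma)$. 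By the definition of Teichm\"uller distance and the existence of the Teichm\"uller map between the two Riemann surface structures, one can choose $f$ (using the measurable Riemann mapping theorem applied to the Teichm\"uller extremal Beltrami differential, extended by reflection) so that $||\mu_f||_\infty = \tgh(d)$, i.e.\ the maximal dilatation of $f$ equals $e^{2d}$. Then $\Gamma = f(\rp)$, where $\rp$ is the equator, is by definition a $K$-quasicircle with $K \leq e^{2d}$. Consequently $\log K \leq 2d$.

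Combining the two steps: choose $d_0 := (\log K_0)/2$ with $K_0$ the constant of Theorem \ref{hyperbolic close to identity}. If $d < d_0$ then $K \leq e^{2d} < K_0$, so Theorem \ref{hyperbolic close to identity} yields $||\lambda||_\infty \leq C \log K \leq 2C\, d$; renaming $2C$ as the universal constant in the statement finishes the proof.

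The main obstacle is the second step, and more precisely the care needed in the quasiconformal bookkeeping: one must exhibit a quasiconformal map of the sphere, equivariant in the appropriate sense, whose restriction to the equator parametrizes $\Gamma$ and whose dilatation is controlled by the Teichm\"uller distance between the ends rather than by, say, the distance from $\Gamma$ to a round circle. The cleanest route is to invoke the Bers simultaneous uniformization picture together with the fact that the extremal quasiconformal map between two points of $\Teich(\Sigma)$ at distance $d$ has dilatation $e^{2d}$, and to note that reflecting a Beltrami differential across the equator at most doubles nothing --- it keeps the same $L^\infty$ norm --- so the resulting curve is an $e^{2d}$-quasicircle. Everything else is a direct citation of Theorem \ref{hyperbolic close to identity} applied to the lifted minimal disc.
\end{proof}
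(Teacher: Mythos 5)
Your overall route is the same as the paper's: lift the closed minimal surface to a $G$-invariant minimal embedded disc whose boundary at infinity is the limit set $\Gamma$, bound the quasicircle constant of $\Gamma$ by $e^{2d}$ using the quasiconformal deformation realizing the Teichm\"uller distance $d$ between the two ends, and then apply Theorem \ref{hyperbolic close to identity} with $d_0=(1/2)\log K_0$ and the factor $2$ absorbed into $C$. The paper's proof is exactly this, stated in a few lines after the statement of the corollary.

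One step, as written, would fail if taken literally: you propose to obtain $f$ by applying the measurable Riemann mapping theorem to the extremal Beltrami differential ``extended by reflection'', and you repeat this in your closing sentence. Extension by reflection, $\hat\mu(z)=\overline{\mu(1/\overline z)}$, produces a quasiconformal map that preserves $S^1$; it conjugates the Fuchsian group $G_0$ to another Fuchsian group and sends $S^1$ to the round circle, not to $\Gamma$, and it is incompatible with your (correct) requirement that $f$ be conformal on one hemisphere. What you need is the Bers--Ahlfors construction: extend the $G_0$-invariant lift of the extremal Beltrami differential by \emph{zero} on the other hemisphere. Then $f$ is conformal there, $K(f)=\frac{1+\tanh d}{1-\tanh d}=e^{2d}$, by simultaneous uniformization the group $fG_0f^{-1}$ is conjugate in $\PSL(2,\C)$ to $G$ (and conjugating by a M\"obius map does not change the quasicircle constant), so $\Gamma$ is a $K$-quasicircle with $K\leq e^{2d}$ in the paper's sense, which requires conformality on one side of $S^1$. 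With this correction your argument coincides with the paper's.
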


Indeed, under the hypothesis of Corollary \ref{cor teich dist}, the Teichm\"uller map from one hyperbolic end of $M$ to the other is $K$-quasiconformal for $K\leq e^{2d_0}$. Hence the lift to the universal cover $\Hyp^3$ of any closed minimal surface in $M$ is a minimal embedded disc with boundary at infinity a $K$-quasicircle, namely the limit set of the corresponding quasi-Fuchsian group. Choosing $d_0=(1/2)\log K_0$, where $K_0$ is the constant of Theorem \ref{hyperbolic close to identity}, and choosing $C$ as in Theorem \ref{hyperbolic close to identity} (up to a factor $2$ which arises from the definition of Teichm\"uller distance), the statement of Corollary \ref{cor teich dist} follows.

We remark here that the constant $C$ of Corollary \ref{cor teich dist} is independent of the genus of $\Sigma$.

A quasi-Fuchsian manifold contaning a closed minimal surface with principal curvatures in $(-1,1)$ is called almost-Fuchsian, according to the definition given in \cite{Schlenker-Krasnov}. The minimal surface in an almost-Fuchsian manifold is unique, by the above discussion, as first observed by Uhlenbeck (\cite{uhlenbeck}). Hence, applying Theorem \ref{theorem uniqueness} to the case of quasi-Fuchsian manifolds, the following corollary is proved.

\begin{corx} \label{corollary teichmuller almost fuchsian}
If the Teichm\"uller distance between the conformal metrics at infinity of a quasi-Fuchsian manifold $M$ is smaller than a universal constant $d_0'$, then $M$ is almost-Fuchsian.
\end{corx}

Indeed, it suffices as above to pick $d_0'=(1/2)\log K_0'$, which is again independent on the genus of $\Sigma$. By Bers' Simultaneous Uniformization Theorem, the Riemann surfaces $\Omega_{\pm}/G$ determine the manifold $M$. Hence the space $\mathcal{QF}(\Sigma)$ of quasi-Fuchsian manifolds homeomorphic to $\Sigma\times\R$, considered up to isometry isotopic to the identity, can be identified to $\Teich(\Sigma)\times \Teich(\Sigma)$. Under this identification, the subset of $\mathcal{QF}(\Sigma)$ composed of Fuchsian manifolds, which we denote by $\mathcal{F}(\Sigma)$, coincides with the diagonal in $\Teich(\Sigma)\times \Teich(\Sigma)$. Let us denote by $\mathcal{AF}(\Sigma)$ the subset of $\mathcal{QF}(\Sigma)$ composed of almost-Fuchsian manifolds. Corollary \ref{corollary teichmuller almost fuchsian} can be restated in the following way:

\begin{corx} \label{corollary neighborhood}
There exists a uniform neighborhood $N(\mathcal{F}(\Sigma))$ of the Fuchsian locus $\mathcal{F}(\Sigma)$ in $\mathcal{QF}(\Sigma)\cong \Teich(\Sigma)\times \Teich(\Sigma)$ such that $N(\mathcal{F}(\Sigma))\subset \mathcal{AF}(\Sigma)$.
\end{corx}
 



We remark that Corollary \ref{cor teich dist} is a partial converse of results presented in \cite{ghw}, giving a bound on the Teichm\"uller distance between the hyperbolic ends of an almost-Fuchsian manifold in terms of the maximum of the principal curvatures.
Another invariant which has been studied in relation with the properties of minimal surfaces in hyperbolic space is the Hausdorff dimension of the limit set. 
Corollary \ref{cor teich dist} and Corollary \ref{corollary teichmuller almost fuchsian} can be compared with the following theorem given in \cite{sanders2}: for every $\epsilon$ and $\epsilon_0$ there exists a constant $\delta=\delta(\epsilon,\epsilon_0)$ such that any stable minimal surface with injectivity radius bounded by $\epsilon_0$ in a quasi-Fuchsian manifold $M$ are in $(-\epsilon,\epsilon)$ provided the Hausdorff dimension of the limit set of $M$ is at most $1+\delta$. In particular, $M$ is almost Fuchsian if one chooses $\epsilon<1$.
Conversely, in \cite{zenobiao2} the authors give an estimate of the Hausdorff dimension of the limit set in an almost-Fuchsian manifold $M$ in terms of the maximum of the principal curvatures of the (unique) minimal surface. The degeneration of almost-Fuchsian manifolds is also studied in \cite{sanders1}.

\subsection*{The main steps of the proof}
The proof of Theorem \ref{hyperbolic close to identity} is composed of several steps. 

By using the technique of ``description from infinity'' (see \cite{epstein} and \cite{schkraRenVol}), we construct a foliation $\mathcal{F}$ of $\Hyp^3$ by equidistant surfaces, such that all the leaves of the foliation have the same boundary at infinity, a quasicircle $\Gamma$. By using a theorem proved in \cite{zograf} and \cite[Appendix]{schkraRenVol}, which relates the curvatures of the leaves of the foliation with the Schwarzian derivative of the map which uniformizes the conformal structure of one component of $\partial_\infty \Hyp^3\setminus\Gamma$, we obtain an explicit bound for the distance between two surfaces $F_+$ and $F_-$ of $\mathcal{F}$, where $F_+$ is concave and $F_-$ is convex, in terms of the Bers norm of $\Gamma$. The distance $d_{\Hyp^3}(F_-,F_+)$ goes to $0$ when $\Gamma$ approaches a circle in $\partial_\infty \Hyp^3$.

A fundamental property of a minimal surface $S$ with boundary at infinity a curve $\Gamma$ is that $S$ is contained in the convex hull of $\Gamma$. The surfaces $F_-$ and $F_+$ of the previous step lie outside the convex hull of $\Gamma$, on the two different sides. Hence every point $x$ of $S$ lies on a geodesic segment orthogonal to two planes $P_-$ and $P_+$ (tangent to $F_-$ and $F_+$ respectively) such that $S$ is contained in the region bounded by $P_-$ and $P_+$. The length of such geodesic segment is bounded by the Bers norm of the quasicircle at infinity, in a way which does not depend on the chosen point $x\in S$. 

The next step in the proof is then a Schauder-type estimate. Considering the function $u$, defined on $S$, which is the hyperbolic sine of the distance from the plane $P_-$, it turns out that $u$ solves the equation 
\begin{equation}
\Delta_S u-2u=0\,, \tag{\ref{lap 2u hyp}}
\end{equation} where $\Delta_S$ is the Laplace-Beltrami operator of $S$. We then apply classical theory of linear PDEs, in particular Schauder estimates, to the equation $\eqref{lap 2u hyp}$ in order to prove that
$$||u||_{C^2(\Omega')}\leq C||u||_{C^0(\Omega)}\,,$$
where $\Omega'\subset\!\subset\Omega$ and $u$ is expressed in normal coordinates centered at $x$. Recall that $\Delta_S$ is the Laplace-Beltrami operator, which depends on the surface $S$. In order to have this kind of inequality, it is then necessary to control the coefficients of $\Delta_S$. This is obtained by a compactness argument for conformal harmonic mappings, adapted from \cite{cuschieri}, recalling that minimal discs in $\Hyp^3$ are precisely the image of conformal harmonic mapping from the disc to $\Hyp^3$. However, to ensure that compact sets in the conformal parametrization are comparable to compact sets in normal coordinates, we will first need to prove a uniform bound of the curvature. For this reason we will assume (as in the statement of Theorem \ref{hyperbolic close to identity}) that the minimal discs we consider have boundary at infinity a $K$-quasicircle, with $K\leq K_0$.

The final step is then an explicit estimate of the principal curvatures at $x\in S$, by observing that the shape operator can be expressed in terms of $u$ and the first and second derivatives of $u$. The Schauder estimate above then gives a bound on the principal curvatures just in terms of the supremum of $u$ in a geodesic ball of fixed radius centered at $x$. By using the first step, since $S$ is contained between $P_-$ and the nearby plane $P_+$, we finally get an estimate of the principal curvatures of a minimal embedded disc only in terms of the Bers norm of the quasicircle at infinity. 

All the previous estimates do not depend on the choice of $x\in S$. Hence the following theorem is actually proved.


\begin{thmx} \label{hyperbolic close to identity quantitative}
There exist constants $K_0>1$ and $C>4$ such that
the principal curvatures $\pm\lambda$ of every minimal surface $S$ in $\Hyp^3$ with $\partial_\infty S=\Gamma$ a $K$-quasicircle, with $K\leq K_0$, are bounded by:
\begin{equation} \label{stima finale hyp}
||\lambda||_\infty\leq\frac{C||\Psi||_\mathcal{B}}{\sqrt{1-C||\Psi||_\mathcal{B}^2}}\,,
\end{equation}
\noindent where $\Gamma=\Psi(S^1)$, $\Psi:\widehat\C\to\widehat\C$ is a quasiconformal map, conformal on $\widehat\C\setminus \D$, and $||\Psi||_\mathcal{B}$ denotes the Bers norm of $\Psi$. 
\end{thmx}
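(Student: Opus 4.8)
The plan is to carry out, and make quantitative, the four-step strategy sketched above, checking throughout that every constant is universal — in particular independent of the chosen point $x\in S$ (and, for the quasi-Fuchsian corollaries, of the genus). First I would construct the equidistant foliation ``from infinity'': uniformizing the conformal structure of $\Omega_+=\widehat\C\setminus\overline\D$ by $\Psi$ and applying the description from infinity of Epstein (\cite{epstein}), one obtains a one-parameter family $\{F_t\}$ of mutually equidistant surfaces in $\Hyp^3$, all with boundary at infinity $\Gamma$, foliating a collar of $\Omega_+$, and symmetrically on the $\Omega_-$ side. The curvature formula of Zograf and of the Appendix to \cite{schkraRenVol} expresses the principal curvatures of $F_t$ through the Schwarzian derivative of $\Psi$, hence through $||\Psi||_\mathcal{B}$; when $||\Psi||_\mathcal{B}$ is small one may then select a convex leaf $F_-$ (on the $\Omega_+$ side) and a concave leaf $F_+$ (on the $\Omega_-$ side), each with principal curvatures bounded away from $1$, together with an explicit bound $d_{\Hyp^3}(F_-,F_+)\le h(||\Psi||_\mathcal{B})$ for an explicit function $h$ with $h(0)=0$ (of $\arctanh$-type). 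The point of this bookkeeping is that after applying the hyperbolic sine in the final step one recovers precisely a bound of the form on the right-hand side of \eqref{stima finale hyp}.

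The second step traps the minimal surface. Since $S$ is minimal with $\partial_\infty S=\Gamma$, the maximum principle against totally geodesic planes shows that $S$ lies in the convex hull of $\Gamma$; as $F_-$ and $F_+$ lie outside this convex hull on opposite sides, $S$ is caught between them. For each $x\in S$, the geodesic through $x$ orthogonal to $F_-$ meets $F_+$, and the totally geodesic planes $P_-,P_+$ tangent to $F_-,F_+$ at the two feet bound a slab of width at most $d_{\Hyp^3}(F_-,F_+)$ containing all of $S$. Setting $u:=\sh\big(d(\cdot,P_-)\big)$ on $S$, a computation using minimality of $S$ together with the Riccati-type equation satisfied by the distance to a totally geodesic plane of $\Hyp^3$ shows that $\Delta_S u-2u=0$, and the slab bound gives $0\le u\le \sh\big(d_{\Hyp^3}(F_-,F_+)\big)$, uniformly in $x$.

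The heart of the proof — and the step I expect to be the main obstacle — is to upgrade this $C^0$ bound on $u$ to a $C^2$ bound via interior Schauder estimates for $\Delta_S u-2u=0$, written in geodesic normal coordinates centred at $x$. This requires uniform $C^{1,\alpha}$ control of the coefficients of $\Delta_S$, hence of the induced metric of $S$ and its curvature, on a coordinate ball of definite radius around $x$. I would obtain such control from a compactness argument for conformal harmonic parametrizations $\D\to\Hyp^3$, adapted from \cite{cuschieri} (recalling that minimal discs in $\Hyp^3$ are precisely images of conformal harmonic maps), preceded by an \emph{a priori} uniform bound $||\lambda||_\infty\le c_0$ valid for minimal discs spanning $K$-quasicircles with $K$ close to $1$; this preliminary bound is what allows one to compare compact subsets of the conformal disc with compact subsets in normal coordinates, and it is exactly here that the hypothesis $K\le K_0$ is used. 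One then obtains $||u||_{C^2(\Omega')}\le C\,||u||_{C^0(\Omega)}$ on concentric coordinate balls $\Omega'\subset\!\subset\Omega$.

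Finally, the shape operator of $S$ at $x$ — and in particular $\lambda(x)$ — can be expressed algebraically in terms of $u$, $\nabla u$ and $\hess u$, since $d(\cdot,P_-)$ plays the role of a support function of $S$ relative to the totally geodesic plane $P_-$. Combining this with the Schauder estimate of the previous step gives $\lambda(x)\le C\,||u||_{C^0(\Omega)}\le C\,\sh\big(d_{\Hyp^3}(F_-,F_+)\big)$, and the first step then converts the right-hand side into the claimed function of $||\Psi||_\mathcal{B}$. Since no constant has depended on the choice of $x\in S$, taking the supremum over $S$ yields the bound \eqref{stima finale hyp}.
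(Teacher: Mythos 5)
Your proposal follows essentially the same route as the paper: the description from infinity with the Bers-norm bound on the width of the slab ($\arctanh$-type, as in Proposition \ref{computation width bers norm}), trapping $S$ in the convex hull via the maximum principle, the equation $\Delta_S u-2u=0$ with Schauder estimates whose uniformity rests on the a priori curvature bound and the compactness of conformal harmonic parametrizations, and finally the expression of the shape operator through $u$, $\grad u$ and $\Hess\,u$. The only small imprecision is the claim $u\leq \sh\bigl(d_{\Hyp^3}(F_-,F_+)\bigr)$ uniformly on a coordinate ball: since the distance between the two planes $P_\pm$ grows away from their common perpendicular, the $C^0$ bound on a geodesic ball of radius $R$ carries an extra factor of order $\cosh R$ (handled in the paper by Lemma \ref{lemma distance formulae}), which is harmlessly absorbed into the universal constant $C$.
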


Observe that the estimate holds in a neighborhood of the identity (which represents circles in $\partial_\infty \Hyp^3$), in the sense of universal Teichm\"uller space.
Theorem \ref{hyperbolic close to identity} is then a consequence of Theorem \ref{hyperbolic close to identity quantitative}, using the well-known fact that the Bers embedding is locally bi-Lipschitz.

\subsection*{Organization of the paper}

The structure of the paper is as follows. In Section \ref{subsection Hyp AdS}, we introduce the necessary notions on hyperbolic space and some properties of minimal surfaces and convex hulls. In Section \ref{section universal} we introduce the theory of quasiconformal maps and universal Teichm\"uller space.
In Section \ref{section minimal} we prove Theorem \ref{hyperbolic close to identity}. The Section is split in several subsections, containing the steps of the proof. In Section \ref{application} we discuss how Theorem \ref{theorem uniqueness}, Corollary \ref{cor teich dist}, Corollary \ref{corollary teichmuller almost fuchsian} and Corollary \ref{corollary neighborhood} follow from Theorem \ref{hyperbolic close to identity}.

\subsection*{Acknowledgements}

I am very grateful to Jean-Marc Schlenker for his guidance and patience. Most of this work was done during my (very pleasent) stay at University of Luxembourg; I would like to thank the Institution for the hospitality. I am very thankful to my advisor Francesco Bonsante and to Zeno Huang for many interesting discussions and suggestions. 
I would like to thank an anonymous referee for many observations and advices which highly improved the presentation of the paper.


\section{Minimal surfaces in hyperbolic space} \label{subsection Hyp AdS}


We consider (3+1)-dimensional Minkowski space $\R^{3,1}$ as $\R^4$ endowed with the bilinear form 
\begin{equation} \label{bil form 31}
\langle x,y\rangle=x^1 y^1+x^2 y^2+x^3 y^3-x^4 y^4\,.
\end{equation}
 The \emph{hyperboloid model} of hyperbolic 3-space is $$\Hyp^3=\left\{x\in\R^{3,1}:\langle x,x\rangle=-1,x^4>0\right\}.$$ 
The induced metric from $\R^{3,1}$ gives $\Hyp^3$ a Riemannian metric of constant curvature -1. The group of orientation-preserving isometries of $\Hyp^3$ is $\isom(\Hyp^3)\cong \SO_+(3,1)$, namely the group of linear isometries of $\R^{3,1}$ which preserve orientation and do not switch the two connected components of the quadric $\left\{\langle x,x\rangle=-1\right\}$. Geodesics in hyperbolic space are the intersection of $\Hyp^3$ with linear planes $X$ of $\R^{3,1}$ (when nonempty); totally geodesic planes are the intersections with linear hyperplanes and are isometric copies of hyperbolic plane $\Hyp^2$. 


We denote by $d_{\Hyp^3}(\cdot,\cdot)$ the metric on $\Hyp^3$ induced by the Riemannian metric. It is easy to show that \begin{equation} \label{formula distanza iperbolico}
\cosh(d_{\Hyp^3}(p,q))=|\langle p,q\rangle|
\end{equation}
\noindent and other similar formulae which will be used in the paper.

Note that $\Hyp^3$ can also be regarded as the projective domain $$P(\left\{\langle x,x\rangle<0\right\})\subset\R P^3.$$
Let us denote by $\widehat {\dS^3}$ the region
$$\widehat {\dS^3}=\left\{x\in\R^{3,1}:\langle x,x\rangle=1\right\}$$
and we call \emph{de Sitter space} the projectivization of $\widehat {\dS^3}$, 
$$\dS^3=P(\left\{\langle x,x\rangle>0\right\})\subset \R P^3.$$
Totally geodesic planes in hyperbolic space, of the form $P=X\cap\Hyp^3$, are parametrized by the dual points $X^\perp$ in $\dS^3\subset\R P^3$.

In an affine chart $\left\{x_4\neq 0\right\}$ for the projective model of $\Hyp^3$, hyperbolic space is represented as the unit ball $\left\{(x,y,z):x^2+y^2+z^2<1\right\}$, using the affine coordinates $(x,y,z)=(x^1/x^4,x^2/x^4,x^3/x^4)$. This is called the \emph{Klein model}; although in this model the metric of $\Hyp^3$ is not conformal to the Euclidean metric of $\R^3$, the Klein model has the good property that geodesics are straight lines, and totally geodesic planes are intersections of the unit ball with planes of $\R^3$. It is well-known that $\Hyp^3$ has a natural boundary at infinity, $\partial_\infty\Hyp^3 =P(\left\{\langle x,x\rangle=0\right\})$, which is a 2-sphere and is endowed with a natural complex projective structure - and therefore also with a conformal structure.

Given an embedded surface $S$ in $\Hyp^3$, we denote by $\partial_\infty S$ its \emph{asymptotic boundary}, namely, the intersection of the topological closure of $S$ with $\partial_\infty\Hyp^3$.

\subsection{Minimal surfaces}

This paper is mostly concerned with smoothly embedded surfaces in hyperbolic space. Let $\sigma:S\rar\Hyp^3$ be a smooth embedding and let $N$ be a unit normal vector field to the embedded surface $\sigma(S)$. We denote again by $\langle {\cdot},{\cdot}\rangle$ the Riemannian metric of $\Hyp^3$, which is the restriction to the hyperboloid of the bilinear form \eqref{bil form 31} of $\R^{3,1}$; $\nabla$ and $\nabla^S$ are the ambient connection and the Levi-Civita connection of the surface $S$, respectively. The \emph{second fundamental form} of $S$ is defined as
$$\nabla_{\tilde v}\tilde w=\nabla^S_{\tilde v}\tilde w+\II(v,w)N$$
if $\tilde v$ and $\tilde w$ are vector fields extending $v$ and $w$. The \emph{shape operator} is the $(1,1)$-tensor defined as $B(v)=-\nabla_v N$. It satisfies the property
$$\II(v,w)=\langle B(v),w\rangle\,.$$
\begin{defi}
An embedded surface $S$ in $\Hyp^3$ is minimal if $\mathrm{tr}(B)=0$.
\end{defi}

The shape operator is symmetric with respect to the first fundamental form of the surface $S$; hence the condition of minimality amounts to the fact that the principal curvatures (namely, the eigenvalues of $B$) are opposite at every point.

An embedded disc in $\Hyp^3$ is said to be \emph{area minimizing} if any compact subdisc is locally the smallest area surface among all surfaces with the same boundary. It is well-known that area minimizing surfaces are minimal. The problem of existence for minimal surfaces with prescribed curve at infinity was solved by Anderson; see \cite{anderson} for the original source and \cite{survey} for a survey on this topic.

\begin{theorem}[\cite{anderson}]
Given a simple closed curve $\Gamma$ in $\partial_\infty \Hyp^3$, there exists a complete area minimizing embedded disc $S$ with $\partial_\infty S=\Gamma$.
\end{theorem}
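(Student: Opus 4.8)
The statement is Anderson's classical existence theorem, and I would prove it by producing $S$ as a smooth limit of solutions of Plateau problems for finite curves approximating $\Gamma$, keeping control near the boundary at infinity by means of convexity barriers.

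\textbf{Step 1: finite approximation and Plateau problems.} Work in the Klein model, so that $\Hyp^3$ is the open unit ball, geodesics and totally geodesic planes are affine, and $\partial_\infty\Hyp^3$ is the unit sphere. Choose a sequence of smooth Jordan curves $\Gamma_n\subset\Hyp^3$ with $\Gamma_n\to\Gamma$, for instance obtained by pushing $\Gamma$ slightly into the ball. For each $n$ the Plateau problem for $\Gamma_n$ has a solution in the class of integral $2$-currents: there is an area-minimizing integral current $T_n$ with $\partial T_n=\Gamma_n$. Since $\dim\Hyp^3=3$, interior regularity for area-minimizing hypersurfaces (no singularities below dimension $7$) together with boundary regularity shows that $T_n$ is a smoothly embedded minimal surface $S_n$ with $\partial S_n=\Gamma_n$; one may take $S_n$ connected (since $\Gamma_n$ is) and a topological disc, because a surface carrying an essential loop or a handle can be cut and reglued to produce a competitor of strictly smaller area, using that $\Gamma_n$ already bounds an embedded disc of finite area.

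\textbf{Step 2: confinement and compactness.} An area-minimizing surface whose boundary lies in a closed convex set is contained in that set: using the duality between totally geodesic planes and points of $\dS^3$ recalled above, through every boundary point of the convex hull $\mathrm{CH}(\Gamma_n)$ there passes a supporting geodesic plane, and since the nearest-point projection onto a convex set of $\Hyp^3$ is distance- (hence area-) nonincreasing, projecting $S_n$ onto $\mathrm{CH}(\Gamma_n)$ gives a competitor of no larger area with the same boundary, forcing $S_n\subset\mathrm{CH}(\Gamma_n)$. As $\mathrm{CH}(\Gamma_n)\to\mathrm{CH}(\Gamma)$, the $S_n$ eventually lie in any prescribed neighborhood of $\mathrm{CH}(\Gamma)$. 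On compact subsets of $\Hyp^3$ one then has uniform area bounds (compare with geodesic cones, via the monotonicity formula) and, since minimizers are stable, uniform curvature estimates of Schoen type; a diagonal argument extracts a subsequence converging smoothly on compact subsets of $\Hyp^3$ to a complete, embedded, area-minimizing surface $S$.

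\textbf{Step 3: asymptotic boundary, and the main obstacle.} It remains to identify $\partial_\infty S$ with $\Gamma$, and in particular to see that $S\neq\emptyset$. The inclusion $\partial_\infty S\subset\Gamma$ is immediate from Step 2, since $S$ lies in arbitrarily small neighborhoods of $\mathrm{CH}(\Gamma)$ and, $\partial_\infty\Hyp^3$ consisting of extreme points of the closed Klein ball, $\overline{\mathrm{CH}(\Gamma)}\cap\partial_\infty\Hyp^3=\Gamma$. The delicate part is the reverse inclusion together with non-degeneracy: fix $p\in\Gamma$ and place two disjoint totally geodesic planes near $p$ cutting off a region $R$ such that $\Gamma$, and hence $\Gamma_n$ for large $n$, has points in each of the two components of $\partial_\infty\Hyp^3$ separated by these planes; a linking argument then forces any surface spanning $\Gamma_n$ to meet $R$, and a non-collapsing estimate (again from the monotonicity formula, uniform in $n$) shows the intersection points stay at bounded distance and do not escape to infinity, so that in the limit $S$ accumulates at $p$. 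Letting $p$ range over $\Gamma$ gives $\Gamma\subset\partial_\infty S$ and $S\neq\emptyset$; finally $S$ is a disc, being properly embedded, simply connected (an essential loop would contradict area-minimization/stability as in Step 1), and having the single circle $\Gamma$ as boundary at infinity. The main obstacle is this control at infinity: constructing the supporting-plane barriers that confine $S_n$ to the convex hull, and especially the lower barrier argument ensuring the limit is nonempty and that \emph{every} point of $\Gamma$ belongs to $\partial_\infty S$ rather than a proper subset, which needs a topological linking argument combined with a quantitative non-collapsing estimate uniform in $n$.
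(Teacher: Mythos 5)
This statement is not proved in the paper at all: it is imported verbatim from Anderson, with a pointer to \cite{anderson} (and a survey) for the proof. So the only meaningful comparison is with Anderson's original argument, and your sketch follows exactly that route: exhaust $\Gamma$ by curves $\Gamma_n$ in $\Hyp^3$, solve the finite Plateau problems by area-minimizing integral currents with interior regularity in dimension $3$, confine the solutions to the convex hull via the $1$-Lipschitz nearest-point projection, pass to a limit using local mass bounds and compactness, and then use barriers/separating planes near each point of $\Gamma$ to show the limit is nonempty and has asymptotic boundary exactly $\Gamma$. That is the correct skeleton, and you correctly identify the control at infinity (nonemptiness and $\Gamma\subset\partial_\infty S$) as the crux.

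There is, however, a genuine gap in the topological part of your write-up. In Step 1 you claim the minimizing current $S_n$ "may be taken to be a topological disc, because a surface carrying an essential loop or a handle can be cut and reglued to produce a competitor of strictly smaller area." This is not an argument that works for area-minimizing integral currents: minimizers with prescribed boundary curve can a priori have positive genus, and cut-and-paste surgery on an embedded minimal surface does not in general produce an admissible competitor with the same boundary and strictly smaller area (controlling the genus of Plateau solutions is a genuinely different problem, in the spirit of Douglas--Rad\'o or Meeks--Yau, not a one-line consequence of minimality). The same issue reappears at the end of Step 3, where simple connectedness of the limit $S$ is asserted by reference to the same flawed argument. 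Note that for the theorem as stated you do not need the $S_n$ to be discs at all (and you also do not need boundary regularity for them, since only interior estimates enter the limit); what you do need is a separate argument that the complete limit surface is a disc --- e.g.\ exploiting that it is properly embedded, separates $\Hyp^3$ into two regions whose ideal boundaries are the two components of $\partial_\infty\Hyp^3\setminus\Gamma$, and that an essential embedded loop could then be capped off and a genuine cut-and-paste comparison run on the \emph{complete minimizing} surface, or invoking Anderson's own topological argument. A smaller point: the identity $\overline{\mathcal{CH}(\Gamma)}\cap\partial_\infty\Hyp^3=\Gamma$ that you use for $\partial_\infty S\subset\Gamma$ is true but deserves the one-line justification (any $q\notin\Gamma$ is cut off from $\Gamma$ by a totally geodesic plane whose ideal circle separates $q$ from $\Gamma$). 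With the disc issue repaired and the linking/non-collapsing estimate of Step 3 actually carried out, your outline is Anderson's proof.
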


 

A key property used in this paper is that minimal surfaces with boundary at infinity a Jordan curve $\Gamma$ are contained in the convex hull of $\Gamma$. Although this fact is known, we prove it here by applying maximum principle to a simple linear PDE describing minimal surfaces.

\begin{defi}
Given a curve $\Gamma$ in $\partial_\infty \Hyp^3$, the convex hull of $\Gamma$, which we denote by $\mathcal{CH}(\Gamma)$, is the intersection of half-spaces bounded by totally geodesic planes $P$ such that $\partial_\infty P$ does not intersect $\Gamma$, and the half-space is taken on the side of $P$ containing $\Gamma$.
\end{defi}

Hereafter $\Hess \,u$ denotes the Hessian of a smooth function $u$ on the surface $S$, i.e. the (1,1) tensor 
$$\Hess \,u(v)=\nabla^S_v \grad u\,.$$
Sometimes the Hessian is also considered as a (2,0) tensor, which we denote (in the rare occurrences) with
$$\nabla^2 u(v,w)=\langle \Hess \,u(v),w\rangle\,.$$
Finally, $\Delta_S$ denotes the Laplace-Beltrami operator of $S$, which can be defined as $$\Delta_S u=\mathrm{tr}(\Hess \,u)\,.$$
Observe that, with this definition, $\Delta_S$ is a negative definite operator.

\begin{prop} \label{formule hess lap hyp}
Given a minimal surface $S\subset\Hyp^3$ and a plane $P$, let $u:S\rar\R$ be the function $u(x)=\sinh d_{\Hyp^3}(x,P)$. Here $d_{\Hyp^3}(x,P)$ is considered as a signed distance from the plane $P$. Let $N$ be the unit normal to $S$, $B=-\nabla N$ the shape operator, and $E$ the identity operatior. Then
\begin{equation}\label{hessian hyp}
\Hess\, u-u\,E=\sqrt{1+u^2-||\grad u||^2}B
\end{equation} 
as a consequence, $u$ satisfies
\begin{equation}\label{lap 2u hyp}
\Delta_S u-2u=0\,. \tag{$\star$}
\end{equation} 
\end{prop}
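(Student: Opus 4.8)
The plan is to work in the hyperboloid model and exploit the dual description of a totally geodesic plane $P$ by its dual point $p\in\dS^3\subset\R^{3,1}$, with $\langle p,p\rangle=1$. The function $u(x)=\sinh d_{\Hyp^3}(x,P)$ is then, up to sign, simply the restriction to $S$ of the linear function $x\mapsto\langle x,p\rangle$ on $\R^{3,1}$ (this is the analogue of formula \eqref{formula distanza iperbolico}, with a sinh appearing because $P$ is spacelike-dual rather than a point). So the very first step is to verify the ambient identity $\sinh d_{\Hyp^3}(x,P)=\langle x,p\rangle$ for $x\in\Hyp^3$, with the appropriate orientation of $P$ fixing the sign, and then to compute covariant derivatives of the \emph{ambient} restriction of a linear function.

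Next I would compute $\grad u$ and $\Hess u$ intrinsically on $S$. For the linear function $\ell_p(x)=\langle x,p\rangle$ on $\Hyp^3$, one has $\nabla^{\Hyp^3}\ell_p(x)=p+\langle x,p\rangle x=p+u\,x$ (the tangential projection of the constant field $p$ onto $T_x\Hyp^3$), and the ambient Hessian satisfies $\hess^{\Hyp^3}\ell_p=\ell_p\cdot E=u\,E$, since linear functions on the hyperboloid satisfy $\hess^{\Hyp^3}\ell_p=-\langle x,p\rangle\,\langle\cdot,\cdot\rangle$ in (2,0) form — this is the standard fact that coordinate functions are eigenfunctions of the Hessian on space forms. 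Restricting to $S$, the Gauss formula gives, for tangent fields $v,w$ on $S$,
\begin{equation*}
\nabla^2 u(v,w)=\hess^{\Hyp^3}\ell_p(v,w)+\langle \nabla^{\Hyp^3}\ell_p,N\rangle\,\II(v,w)=u\,\langle v,w\rangle+\langle p+u\,x,N\rangle\,\II(v,w).
\end{equation*}
Since $\langle x,N\rangle=0$, the coefficient of $\II$ is $\langle p,N\rangle$, so $\Hess u-u\,E=\langle p,N\rangle\,B$. The remaining point is to identify $\langle p,N\rangle$ with $\sqrt{1+u^2-\|\grad u\|^2}$: decompose the constant vector $p$ in the orthonormal-ish frame $\{x,N,\text{tangent to }S\}$ using the Minkowski product. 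Writing $p=-\langle p,x\rangle x+\langle p,N\rangle N+(\text{tangential part})$ and taking $\langle p,p\rangle=1$ yields $1=-u^2+\langle p,N\rangle^2+\|\grad u\|^2$, hence $\langle p,N\rangle^2=1+u^2-\|\grad u\|^2$; choosing $N$ consistently with the sign convention gives \eqref{hessian hyp}.

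Finally, equation \eqref{lap 2u hyp} is obtained by tracing \eqref{hessian hyp}: $\Delta_S u-2u=\tr(\Hess u-u\,E)=\sqrt{1+u^2-\|\grad u\|^2}\,\tr(B)$, and $\tr(B)=0$ because $S$ is minimal. The main obstacle, such as it is, is purely bookkeeping: getting all the signs right (the orientation of $P$, the choice of $N$, and the fact that $\Delta_S=\tr\Hess$ is \emph{negative} definite so that the ``$-2u$'' rather than ``$+2u$'' appears), together with correctly invoking the Gauss formula for the Hessian of a restricted function — I expect no genuine analytic difficulty here, only care with the Lorentzian inner product and the second fundamental form conventions fixed earlier in this section.
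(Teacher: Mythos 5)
Your proposal is correct and follows essentially the same route as the paper: realize $u$ as the restriction of the linear function $\langle\cdot,p\rangle$ for the dual point $p\in\dS^3$, compute $\grad u$ and $\Hess u$ on $S$ by projecting from $\R^{3,1}$ (equivalently via the Gauss formula with $\langle p,N\rangle\,\II$), identify $\langle p,N\rangle^2=1+u^2-||\grad u||^2$ by decomposing $p$ in the frame $\{x,N,\text{tangent}\}$, and conclude by tracing and using $\tr B=0$. The only blemish is the parenthetical justification $\hess^{\Hyp^3}\ell_p=-\langle x,p\rangle\,\langle\cdot,\cdot\rangle$, whose sign is wrong in the hyperboloid model (it should be $+\langle x,p\rangle\,\langle\cdot,\cdot\rangle$, consistent with the formula $\hess^{\Hyp^3}\ell_p=u\,E$ that you actually use), so nothing downstream is affected.
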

\begin{proof}
Consider the hyperboloid model for $\Hyp^3$. Let us assume $P$ is the plane dual to the point $p\in\dS^3$, meaning that $P=p^\perp \cap \Hyp^3$. Then $u$ is the restriction to $S$ of the function $U$ defined on $\Hyp^3$:
\begin{equation} \label{definition linear distance function}
U(x)=\sinh d_{\Hyp^3}(x,P)=\langle x,p\rangle\,.
\end{equation}
 Let $N$ be the unit normal vector field to $S$; we compute $\grad u$ by projecting the gradient $\nabla U$ of $U$ to the tangent plane to $S$:
\begin{gather}
\nabla U=p+\langle p,x \rangle x \\
\grad u(x)=p+\langle p,x \rangle x - \langle p,N\rangle N
\end{gather}

\noindent Now $\Hess u(v)=\nabla^{S}_v \grad u$, where $\nabla^{S}$ is the Levi-Civita connection of $S$, namely the projection of the flat connection of $\R^{3,1}$, and so
$$\Hess \,u(x)(v)=\langle p,x\rangle v - \langle p,N\rangle \nabla^S_v N=u(x)v+\langle \nabla U,N\rangle B(v)\,.$$
Moreover, $\nabla U=\grad u+\langle \nabla U,N\rangle N$ and thus
$$\langle \nabla U,N\rangle^2=\langle\nabla U,\nabla U\rangle-||\grad u||^2=1+u^2-||\grad u||^2$$
which proves (\ref{hessian hyp}). By taking the trace, (\ref{lap 2u hyp}) follows.
\end{proof}

\begin{cor} \label{minimal surface contained convex hull}
Let $S$ be a minimal surface in $\Hyp^3$, with $\partial_\infty(S)=\Gamma$ a Jordan curve. Then $S$ is contained in the convex hull $\mathcal{CH}(\Gamma)$.
\end{cor}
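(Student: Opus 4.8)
The plan is to use Proposition \ref{formule hess lap hyp} together with the maximum principle, applied to each totally geodesic plane $P$ whose boundary circle $\partial_\infty P$ is disjoint from $\Gamma$. Fix such a plane $P$, and let $H_P$ be the closed half-space bounded by $P$ on the side containing $\Gamma$; by definition $\mathcal{CH}(\Gamma)=\bigcap_P H_P$, so it suffices to show $S\subset H_P$ for every such $P$. Choose the dual point $p\in\dS^3$ so that the signed distance function $U(x)=\langle x,p\rangle$ is negative precisely on the open complementary half-space $\Hyp^3\setminus H_P$, and set $u=U|_S:S\to\R$. By Proposition \ref{formule hess lap hyp}, $u$ satisfies the elliptic equation $\Delta_S u-2u=0$ on $S$. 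I want to conclude $u\geq 0$ everywhere on $S$, which is exactly the statement $S\subset H_P$.

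The argument I would give is the following. Suppose for contradiction that $u<0$ somewhere on $S$. First I need to understand the boundary behaviour: since $\partial_\infty S=\Gamma$ lies in $\partial_\infty H_P$ and $\partial_\infty P\cap\Gamma=\emptyset$, the asymptotic boundary of $S$ is contained in the interior of $\partial_\infty H_P$ (as a subset of the sphere at infinity), hence stays at positive distance from $\partial_\infty P$. From this one sees that $u$ extends continuously to the closure with $u\geq 0$ (in fact $\to+\infty$) on $\partial_\infty S$, so the set $\{u<0\}$ is relatively compact in the open surface $S$; more precisely $\liminf u\ge 0$ as one exits every compact set of $S$ toward $\Gamma$. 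Thus $u$ attains a negative minimum at an interior point $x_0\in S$. At $x_0$ we have $\grad u(x_0)=0$ and $\Hess\,u(x_0)$ positive semidefinite, hence $\Delta_S u(x_0)=\tr(\Hess\,u(x_0))\geq 0$. But the equation gives $\Delta_S u(x_0)=2u(x_0)<0$, a contradiction. Therefore $u\geq 0$ on $S$, i.e. $S\subset H_P$.

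Running this over all admissible planes $P$ yields $S\subseteq\bigcap_P H_P=\mathcal{CH}(\Gamma)$, which is the claim.

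I expect the only delicate point to be the asymptotic boundary control that makes the minimum of $u$ be attained at an interior point: one must rule out the infimum of $u$ being a negative value approached only ``at infinity'' along $S$. This is handled by the separation $\partial_\infty S\cap\partial_\infty P=\emptyset$: near infinity $S$ is confined to a region of $\Hyp^3$ whose closure misses $\partial_\infty P$, forcing $u$ to be bounded below there, in fact to tend to $+\infty$; then on the compact complement the minimum is genuinely attained. (If one prefers to avoid any subtlety about the behaviour of $S$ near $\Gamma$, one can instead exhaust $S$ by compact subsurfaces $S_n$ with smooth boundary $\partial S_n$ converging to $\Gamma$, note $u\to+\infty$ on $\partial S_n$ uniformly as $n\to\infty$, apply the above interior-minimum argument on each $S_n$, and pass to the limit; the strong maximum principle then even gives $u>0$ in the interior unless $u\equiv 0$, i.e. $S=P$.)
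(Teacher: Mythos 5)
Your proof is correct and follows essentially the same route as the paper: for each plane $P$ with $\partial_\infty P\cap\Gamma=\emptyset$ you apply the maximum principle to $u=\sinh d_{\Hyp^3}(\cdot,P)$, which solves $\Delta_S u-2u=0$ by Proposition \ref{formule hess lap hyp}, ruling out an interior negative minimum and using the asymptotic separation of $\Gamma$ from $\partial_\infty P$ to confine $\{u<0\}$ to a compact set. Your justification of the boundary behaviour (and the optional exhaustion argument) is in fact slightly more detailed than the paper's one-line assertion that $u\geq 0$ outside a large compact set.
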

\begin{proof}
If $\Gamma$ is a circle, then $S$ is a totally geodesic plane which coincides with the convex hull of $\Gamma$. Hence we can suppose $\Gamma$ is not a circle. Consider a plane $P_-$ which does not intersect $\Gamma$ and the function $u$ defined as in Equation \eqref{definition linear distance function} in Proposition \ref{formule hess lap hyp}, with respect to $P_-$. Suppose their mutual position is such that $u\geq 0$ in the region of $S$ close to the boundary at infinity (i.e. in the complement of a large compact set). If there exists some point where $u<0$, then at a minimum point $\Delta_S u=2u<0$, which gives a contradiction. The proof is analogous for a plane $P_+$ on the other side of $\Gamma$, by switching the signs. Therefore every convex set containing $\Gamma$ contains also $S$.
\end{proof}


\section{Universal Teichm\"uller space} \label{section universal}

The aim of this section is to introduce the theory of quasiconformal mappings and universal Teichm\"uller space. We will give a brief account of the very rich and developed theory. Useful references are \cite{gardiner, gardiner2, ahlforsbook, fletchermarkoviclibro} and the nice survey \cite{sugawa}.

\subsection{Quasiconformal mappings and universal Teichm\"uller space} \label{subsection universal}

We recall the definition of quasiconformal map.

\begin{defi} \label{definitions quasiconformal map}
Given a domain $\Omega\subset\C$, an orientation-preserving homeomorphism $$f:\Omega\to f(\Omega)\subset\C$$ is \emph{quasiconformal} if $f$ is absolutely continuous on lines and there exists a constant $k<1$ such that $$|\partial_{\overline z}f|\leq k |\partial_{z}f|\,.$$
\end{defi}

Let us denote $\mu_f=\partial_{\overline z}f/\partial_{z}f$, which is called \emph{complex dilatation} of $f$. This is well-defined almost everywhere, hence it makes sense to take the $L_\infty$ norm. Thus a homeomorphism $f:\Omega\to f(\Omega)\subset\C$ is quasiconformal if $||\mu_f||_\infty<1$. Moreover, a quasiconformal map as in Definition \ref{definitions quasiconformal map} is called $K$-\emph{quasiconformal}, where
$$K=\frac{1+k}{1-k}\,.$$
It turns out that the best such constant $K\in[1,+\infty)$ represents the \emph{maximal dilatation} of $f$, i.e. the supremum over all $z\in\Omega$ of the ratio between the major axis and the minor axis of the ellipse which is the image of a unit circle under the differential $d_z f$. 

It is known that a $1$-quasiconformal map is conformal, and that the composition of a $K_1$-quasiconformal map and a $K_2$-quasiconformal map is $K_1 K_2$-quasiconformal. Hence composing with conformal maps does not change the maximal dilatation. 

Actually, there is an explicit formula for the complex dilatation of the composition of two quasiconformal maps $f,g$ on $\Omega$:
\begin{equation} \label{composition belt differentials}
\mu_{g\circ f^{-1}}=\frac{\partial_z f}{\overline{\partial_{ z} f}}\frac{\mu_g-\mu_f}{1-\overline{\mu_f}\mu_g}\,.
\end{equation}
Using Equation \eqref{composition belt differentials}, one can see that $f$ and $g$ differ by post-composition with a conformal map if and only if $\mu_f=\mu_g$ almost everywhere. We now mention the classical and important result of existence of quasiconformal maps with given complex dilatation.
\\

\noindent {\bf{Measurable Riemann mapping Theorem}}. Given any measurable function $\mu$ on $\C$ there exists a unique quasiconformal map $f:\C\to\C$ such that $f(0)=0$, $f(1)=1$ and $\mu_f=\mu$ almost everywhere in $\C$.
\\

The uniqueness part of Measurable Riemann mapping Theorem means that every two solutions (which can be thought as maps on the Riemann sphere $\widehat\C$) of the equation $$ (\partial_{z}f) \mu=\partial_{\overline z}f$$
differ by post-composition with a M\"obius transformation of $\widehat \C$. 

Given any fixed $K\geq 1$, $K$-quasiconformal mappings have an important compactness property. See \cite{gardiner} or \cite{lehto}. 

\begin{theorem} \label{Compactness property of quasicircles}
Let $K>1$ and $f_n:\widehat \C\to\widehat\C$ be a sequence of $K$-quasiconformal mappings such that, for three fixed points $z_1,z_2,z_3\in\widehat\C$, the mutual spherical distances are bounded from below: there exists a constant $C_0>0$ such that
$$d_{\mathbb{S}^2}(f_n(z_i),f_n(z_j))>C_0$$
for every $n$ and for every choice of $i,j=1,2,3$, $i\neq j$. Then there exists a subsequence $f_{n_k}$ which converges uniformly to a $K$-quasiconformal map $f_\infty:\widehat\C\to\widehat\C$.
\end{theorem}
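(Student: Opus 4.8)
The statement to prove is the compactness property of $K$-quasiconformal mappings (Theorem \ref{Compactness property of quasicircles}), so let me sketch a proof.

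\medskip

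The plan is to establish the result in two stages: first extract a subsequence converging uniformly to \emph{some} continuous map $f_\infty:\widehat\C\to\widehat\C$ by an equicontinuity/Arzel\`a--Ascoli argument, and then upgrade the limit to being a genuine $K$-quasiconformal homeomorphism. The key analytic input is the uniform modulus of continuity enjoyed by normalized $K$-quasiconformal self-maps of $\widehat\C$: there is a standard distortion estimate (H\"older continuity in the spherical metric, sometimes attributed to Mori in the Euclidean disc setting) asserting that a $K$-quasiconformal map $f:\widehat\C\to\widehat\C$ fixing three points whose images remain $C_0$-separated in the spherical metric satisfies $d_{\mathbb{S}^2}(f(z),f(w)) \leq \eta(d_{\mathbb{S}^2}(z,w))$ for a modulus $\eta$ depending only on $K$ and $C_0$. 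The normalization hypothesis on the three points $z_1,z_2,z_3$ is precisely what prevents the family from degenerating (e.g. converging to a constant), so it is exactly what is needed to get such a uniform bound.

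\medskip

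So the first step is: invoke the uniform equicontinuity of the family $\{f_n\}$ just described — equicontinuity with respect to the spherical metric on both source and target — together with the fact that $\widehat\C$ is compact, and apply the Arzel\`a--Ascoli theorem to extract a subsequence $f_{n_k}$ converging uniformly to a continuous map $f_\infty:\widehat\C\to\widehat\C$. The second step is to show $f_\infty$ is $K$-quasiconformal. Here I would use one of the equivalent characterizations of $K$-quasiconformality that passes to uniform limits: either the quadrilateral/modulus definition (uniform limits of $K$-qc maps do not increase the modulus of a quadrilateral by more than a factor $K$, hence the limit is $K$-qc provided it is a homeomorphism), or the fact (Weyl's lemma / standard qc theory) that a uniform limit of $K$-qc maps which is a homeomorphism is automatically $K$-qc. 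The nontrivial point inside this step is injectivity of $f_\infty$: a priori a uniform limit of homeomorphisms need only be a monotone (degree one, possibly degenerate) map. Non-degeneracy is again rescued by the three-point normalization — since $f_\infty(z_i)$ are distinct (the separation $C_0$ survives in the limit), $f_\infty$ is non-constant; and one argues, e.g. by applying the same compactness to the inverse maps $f_n^{-1}$ (which are also $K$-qc and whose values on the three points $f_n(z_i)$ stay separated by hypothesis, after passing to a further subsequence so that $f_{n_k}(z_i)$ converge), that the limit of the inverses is a continuous inverse to $f_\infty$, forcing $f_\infty$ to be a homeomorphism. Once $f_\infty$ is known to be a homeomorphism, the modulus-estimate passes to the limit and gives $K$-quasiconformality.

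\medskip

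The main obstacle is therefore the second step — promoting the Arzel\`a--Ascoli limit from merely continuous to a $K$-quasiconformal homeomorphism — and within it, ruling out collapse of the limit map. The cleanest route is the symmetric argument: run the compactness simultaneously for $\{f_{n_k}\}$ and $\{f_{n_k}^{-1}\}$ (legitimate because $f_n^{-1}$ is again $K$-qc, and after a further diagonal subsequence the triples $f_{n_k}(z_i)$ converge to distinct points $w_i$, so the inverses satisfy the hypotheses of the theorem with the roles of the $z_i$ and $w_i$ swapped), obtain continuous limits $f_\infty$ and $g_\infty$, check $g_\infty \circ f_\infty = \mathrm{id}$ and $f_\infty \circ g_\infty = \mathrm{id}$ by passing to the limit in $f_{n_k}^{-1}\circ f_{n_k} = \mathrm{id}$ using uniform convergence on the compact space $\widehat\C$, and conclude $f_\infty$ is a homeomorphism. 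The remaining verification that a homeomorphic uniform limit of $K$-qc maps is $K$-qc is then a citable standard fact from the references (\cite{lehto}, \cite{gardiner}), or can be seen directly from the quadrilateral-modulus definition since moduli are continuous under uniform convergence of the defining homeomorphisms.
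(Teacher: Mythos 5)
The paper does not actually prove this statement: it is quoted as a standard compactness property with a pointer to \cite{gardiner} and \cite{lehto}, and your sketch reproduces exactly the classical argument found in those references (uniform equicontinuity of the three-point-normalized $K$-quasiconformal family, Arzel\`a--Ascoli, the same compactness applied to the inverse maps --- after passing to a further subsequence so that the image triples converge to distinct points --- to rule out a degenerate limit, and the standard fact that a homeomorphic uniform limit of $K$-quasiconformal maps is itself $K$-quasiconformal). Your proposal is correct and follows essentially the same (cited) route as the paper, so nothing further needs to be compared.
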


\subsection{Quasiconformal deformations of the disc} \label{subsec: quasiconformal model disc}

It turns out that every quasiconformal homeomorphisms of $\D$ to itself extends to the boundary $\partial\D=S^1$. Let us consider the space:
$$QC(\D)=\left\{\Phi:\D\to\D\text { quasiconformal}\right\}/\sim$$
where $\Phi\sim \Phi'$ if and only if $\Phi|_{S^1}=\Phi'|_{S^1}$. Universal Teichm\"uller space is then defined as 
$$\Teich(\D)=QC(\D)/\mbox{M\"ob}(\D)\,,$$
where $\mbox{M\"ob}(\D)$ is the subgroup of M\"obius transformations of $\D$.
Equivalently, $\Teich(\D)$ is the space of quasiconformal homeomorphisms $\Phi:\D\to\D$ which fix $1$, $i$ and $-1$ up to the same relation $\sim$.

Such quasiconformal homeomorphisms of the disc can be obtained in the following way. Given a domain $\Omega$, elements in the unit ball of the (complex-valued) Banach space $L^\infty(\D)$ are called \emph{Beltrami differentials} on $\Omega$. Let us denote this unit ball by:
$$\mathrm{Belt}(\D)=\{\mu\in L^\infty(\D)|\;||\mu||_\infty<1\}\,.$$ Given any $\mu$ in $\mathrm{Belt}(\D)$, let us define  $\hat\mu$ on $\C$ by extending $\mu$ on $\C\setminus \D$ so that
$$\hat\mu(z)=\overline{\mu(1/\overline{z})}\,.$$
The quasiconformal map $f^\mu:\C\to\C$ such that $\mu_{f^\mu}=\hat\mu$ fixing $1$, $i$ and $-1$, whose existence is provided by Measurable Riemann mapping Theorem, maps $\partial\D$ to itself by the uniqueness part. Therefore $f^\mu$ restricts to a quasiconformal homeomorphism of $\D$ to itself.

The Teichm\"uller distance on $\Teich(\D)$ is defined as
$$d_{\Teich(\D)}([\Phi],[\Phi'])=\frac{1}{2}\inf \log K(\Phi_1^{-1}\circ\Phi_1')\,,$$
where the infimum is taken over all quasiconformal maps $\Phi_1\in[\Phi]$ and $\Phi_1'\in[\Phi']$.
It can be shown that $d_{\Teich(\D)}$ is a well-defined distance on  Teichm\"uller space, and $(\Teich(\D),d_{\Teich(\D)})$ is a complete metric space.

\subsection{Quasicircles and Bers embedding} \label{subsec: quasidiscs}

We now want to discuss another interpretation of  Teichm\"uller space, as the space of quasidiscs, and the relation with the Schwartzian derivative and the Bers embedding.

\begin{defi} \label{defi quasicircle}
A \emph{quasicircle} is a simple closed curve $\Gamma$ in $\widehat\C$ such that $\Gamma=\Psi(S^1)$ for a quasiconformal map $\Psi$. Analogously, a \emph{quasidisc} is a domain $\Omega$ in $\widehat\C$ such that $\Omega=\Psi(\D)$ for a quasiconformal map $\Psi:\widehat\C\to\widehat\C$. 
\end{defi}

Let us denote $\D^*=\{z\in\widehat\C:|z|>1\}=\{z\in\C:|z|>1\}\cup\{\infty\}$. We remark that in the definition of quasicircle, it would be equivalent to say that $\Gamma$ is the image of $S^1$ by a $K'$-quasiconformal map of $\widehat \C$ (not necessarily conformal on $\D^*$). However, the maximal dilatation $K'$ might be different, with $K\leq K'\leq 2K$.
Hence we consider the space of quasidiscs:
 $$QD(\D)=\{\Psi:\widehat\C\to\widehat\C:\Psi|_\D\text{ is quasiconformal and }\Psi|_{\D^*}\text{ is conformal}\}/\sim\,,$$
 where the equivalence relation is $\Psi\sim \Psi'$ if and only if $\Psi|_{\D^*}=\Psi'|_{\D^*}$. We will again consider the quotient of $QD(\D)$ by M\"obius transformation. 

Given a Beltrami differential $\mu\in\mathrm{Belt}(\D)$, one can construct a quasiconformal map on $\widehat \C$, by applying Measurable Riemann mapping Theorem to the Beltrami differential obtained by extending $\mu$ to $0$ on 
 $\D^*$. The quasiconformal map obtained in this way (fixing the three points $0$,$1$ and $\infty$) is denoted by $f_\mu$. A well-known lemma (see \cite[§5.4, Lemma 3]{gardiner}) shows that, given two Beltrami differentials $\mu,\mu'\in\mathrm{Belt}(\D)$, $f^\mu|_{S^1}=f^{\mu'}|_{S^1}$ if and only if $f_\mu|_{\D^*}=f_{\mu'}|_{\D^*}$. Using this fact it can be shown that $\Teich(\D)$ is identified to $QD(\D)/\mbox{M\"ob}(\widehat\C)$, or equivalently to the subset of $ QD(\D)$ which fix $0$, $1$ and $\infty$.
  
We will say that a quasicircle $\Gamma$ is a $K$-quasicircle if
$$K=\inf_{\Gamma=\Psi(S^1) \atop \Psi\in Q\!D(\D)} K(\Psi)\,.$$
It is easily seen that the condition that $\Gamma=\Psi(S^1)$ is a $K$-quasicircle is equivalent to the fact that the element $[\Phi]$ of the first model $\Teich(\D)=QC(\D)/\mbox{M\"ob}(\D)$ which corresponds to $[\Psi]$ has Teichm\"uller distance from the identity
$d_{\Teich(\D)}([\Phi],[\mathrm{id}])=(\log K)/2$.

By using the model of quasidiscs for 
 Teichm\"uller space, we now introduce the Bers norm on $\Teich(\D)$.
Recall that, given a holomorphic function  $f:\Omega\to\C$ with $f'\neq 0$ in $\Omega$, the \emph{Schwarzian derivative} of $f$ is the holomorphic function
$$S_f=\left(\frac{f''}{f'}\right)'-\frac{1}{2} \left(\frac{f''}{f'}\right)^2\,.$$
It can be easily checked that $S_{1/f}=S_f$, hence the Schwarzian derivative can be defined also for meromorphic functions at simple poles. The Schwarzian derivative vanishes precisely on M\"obius transformations. 

   Let us now consider the space of holomorphic quadratic differentials on $\D$. We will consider the following norm, for a holomorphic quadratic differential $q=h(z)dz^2$:
$$||q||_{\infty}=\sup_{z\in\D}e^{-2\eta(z)}|h(z)|\,,$$ 
where $e^{2\eta(z)}|dz|^2$ is the Poincar\'e metric of constant curvature $-1$ on $\D$. Observe that $||q||_\infty$ behaves like a function, in the sense that it is invariant by pre-composition with M\"obius transformations of $\D$, which are isometries for the Poincar\'e metric. 

We now define the \emph{Bers embedding} of universal Teichm\"uller space.
This is the map
$\beta_\D$ which associates to $[\Psi]\in\Teich(\D)=QD(\D)/\mbox{M\"ob}(\widehat\C)$ the Schwarzian derivative $S_\Psi$. Let us denote by $||\bullets||_{\mathcal{Q}(\D^*)}$ the norm on holomorphic quadratic differentials on $\D^*$ obtained from the $||\bullets||_\infty$ norm on $\D$, by  identifying $\D$ with $\D^*$ by an inversion in $S^1$. Then
$$\beta_\D:\Teich(\D)\rightarrow \mathcal{Q}(\D^*)$$
 is an embedding of $\Teich(\D)$
 in the Banach space $(\mathcal{Q}(\D^*),||\bullets||_{\mathcal{Q}(\D^*)})$ of bounded holomorphic quadratic differentials (i.e. for which $||q||_{\mathcal{Q}(\D^*)}<+\infty$). Finally, the Bers norm of en element $\Psi\in\Teich(\D)$ is 
 $$||\Psi||_{\mathcal B}=||\beta_\D[\Psi]||_\infty=||S_\Psi||_{\mathcal{Q}(\D^*)}\,.$$

The fact that the Bers embedding is locally bi-Lipschitz will be used in the following. See for instance \cite[Theorem 4.3]{fletmarkkahn}. In the statement, we again implicitly identify the models of universal Teichm\"uller space by quasiconformal homeomorphisms of the disc (denoted by $[\Phi]$) and by quasicircles (denoted by $[\Psi]$).

\begin{theorem} \label{thm bers emb locally bilipschitz}
Let $r>0$. There exist constants $b_1$ and $b_2=b_2(r)$ such that, for every $[\Psi],[\Psi']$ in the ball of radius $r$ for the Teichm\"uller distance centered at the origin 
(i.e. $d_{\Teich(\D)}([\Psi],[\mathrm{id}]),d_{\Teich(\D)}([\Psi'],[\mathrm{id}])<r$),
$$b_1 ||\beta_\D[\Psi]-\beta_\D[\Psi]||_\infty\leq d_{\Teich(\D)}([\Psi],[\Psi'])\leq b_2 ||\beta_\D[\Psi]-\beta_\D[\Psi]||_\infty\,.$$
\end{theorem}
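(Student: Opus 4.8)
The final statement I need to address is Theorem \ref{thm bers emb locally bilipschitz}, the local bi-Lipschitz property of the Bers embedding.

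\medskip

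The plan is to prove that the Bers embedding $\beta_\D:\Teich(\D)\to\mathcal{Q}(\D^*)$ is locally bi-Lipschitz with respect to the Teichm\"uller distance, by exploiting the fact that $\beta_\D$ is a biholomorphism onto its image — an open bounded subset of the Banach space $(\mathcal{Q}(\D^*),||\bullets||_{\mathcal{Q}(\D^*)})$ — together with comparison estimates between the Teichm\"uller metric and the standard $L^\infty$ norm on Beltrami differentials. First I would recall the two basic facts about the upper bound: (i) the Teichm\"uller distance $d_{\Teich(\D)}([\Psi],[\Psi'])$ is bounded above by $\tfrac12\log\frac{1+\|\nu\|_\infty}{1-\|\nu\|_\infty}$, where $\nu$ is the Beltrami differential of $f^{\mu'}\circ(f^\mu)^{-1}$ computed via Equation \eqref{composition belt differentials}; and (ii) the map $\mu\mapsto S_{f_\mu}$ from $\mathrm{Belt}(\D)$ to $\mathcal{Q}(\D^*)$ is holomorphic and is a submersion at every point, with the derivative at the origin being the classical Ahlfors--Weill type operator, which is bounded and bounded below. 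Combining (i) with the holomorphy, a standard argument using the Cauchy integral formula on the Banach space — estimating $\|\beta_\D[\Psi]-\beta_\D[\Psi']\|_\infty$ in terms of $\|\nu\|_\infty$ on a ball of radius $r$, where the radius of the ambient ball on which $\beta_\D$ is defined controls the constant — yields the inequality $d_{\Teich(\D)}([\Psi],[\Psi'])\geq b_1\|\beta_\D[\Psi]-\beta_\D[\Psi']\|_\infty$ with $b_1$ universal.

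\medskip

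For the reverse inequality $d_{\Teich(\D)}([\Psi],[\Psi'])\leq b_2(r)\|\beta_\D[\Psi]-\beta_\D[\Psi']\|_\infty$, the key input is the Ahlfors--Weill section: if $\|S_\Psi\|_{\mathcal{Q}(\D^*)}<1/2$, then there is an explicit quasiconformal reflection producing a Beltrami differential $\mu$ with $\|\mu\|_\infty\leq C\|S_\Psi\|_{\mathcal{Q}(\D^*)}$ whose associated map has the prescribed Schwarzian. More precisely, I would work locally: fix a basepoint $[\Psi_0]$ in the ball of radius $r$, translate the Bers embedding so that $[\Psi_0]$ goes to the origin (using the fact that $\Teich(\D)$ is a homogeneous space under the change-of-basepoint biholomorphisms, which act by pre-composition and are isometries in both metrics), and then apply the Ahlfors--Weill estimate near the new origin. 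Since the translated Bers image of $[\Psi']$ has small $\mathcal{Q}$-norm (controlled by $r$ and the bi-Lipschitz constant of the translation, which depends on $r$), the reflection gives a quasiconformal map realizing $[\Psi']$ relative to $[\Psi_0]$ with dilatation controlled linearly by $\|\beta_\D[\Psi_0]-\beta_\D[\Psi']\|_\infty$; taking $\tfrac12\log K$ of that dilatation and using $\log\frac{1+t}{1-t}\leq C t$ for $t$ bounded away from $1$ gives the claim with $b_2$ depending on $r$.

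\medskip

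The main obstacle is keeping careful track of how the constants depend on $r$ in the change-of-basepoint step: the biholomorphisms of $\Teich(\D)$ are isometries for $d_{\Teich(\D)}$ but only locally bi-Lipschitz (not isometric) for the Banach norm $\|\bullets\|_\infty$ on $\mathcal{Q}(\D^*)$, so one must verify that the translation distortion is uniformly bounded on the $r$-ball — this is where the boundedness of the Bers image and interior estimates for holomorphic maps between Banach spaces come in. A secondary technical point is that the Ahlfors--Weill section is only defined (and only gives the linear estimate) when the Schwarzian norm is $<1/2$; restricting to a sufficiently small ball, or equivalently shrinking $r$ and then rescaling, handles this, and one checks that the final statement as phrased (with $b_1$ universal and $b_2=b_2(r)$) is exactly what this argument produces. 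Since this theorem is quoted from \cite{fletmarkkahn}, I would in fact simply cite it, but the sketch above is the argument behind it.
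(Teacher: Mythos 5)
The paper gives no proof of this statement: it is quoted as background from \cite[Theorem 4.3]{fletmarkkahn}, which is exactly what you propose to do, and your sketch (the Ahlfors--Weill section giving $d_{\Teich(\D)}\leq b_2\,\|\beta_\D[\Psi]-\beta_\D[\Psi']\|_\infty$ after a change of basepoint, and holomorphy of $\beta_\D$ together with the Nehari bound giving the reverse inequality with a universal $b_1$, with the translation distortion accounting for the $r$-dependence of $b_2$) is the standard argument behind that reference. One small internal inconsistency to fix: the change-of-basepoint biholomorphisms are isometries for $d_{\Teich(\D)}$ but, as you yourself correctly note later, only locally bi-Lipschitz for the Bers norm, so the earlier phrase ``isometries in both metrics'' should be dropped.
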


We conclude this preliminary part by mentioning a theorem by Nehari, see for instance \cite{lehto} or \cite{fletchermarkoviclibro}.
\\

\noindent {\bf{Nehari Theorem}}.
The image of the Bers embedding is contained in the ball of radius $3/2$ in $(\mathcal{Q}(\D^*),||\bullets||_{\mathcal{Q}(\D^*)})$, and contains the ball of radius $1/2$.


\section{Minimal surfaces in $\Hyp^3$} \label{section minimal}

The goal of this section is to prove Theorem \ref{hyperbolic close to identity}. The proof is divided into several steps, whose general idea is the following:
\begin{enumerate}
\item \label{step1} Given $\Psi\in QD(\D)$, if $||\Psi||_{\mathcal B}$ is small, then there is a foliation $\mathcal{F}$ of a convex subset $\mathcal C$ of $\Hyp^3$ by equidistant surfaces. All the surfaces $F$ of $\mathcal{F}$ have asymptotic boundary the quasicircle $\Gamma=\Psi(S^1)$. Hence the convex hull of $\Gamma$ is trapped between two parallel surfaces, whose distance is estimated in terms of $||\Psi||_{\mathcal B}$.
\item \label{step2} As a consequence of point (\ref{step1}), given a minimal surface $S$ in $\Hyp^3$ with $\partial_\infty(S)=\Gamma$, for every point $x\in S$ there is a geodesic segment through $x$ of small length orthogonal at the endpoints to two planes $P_-$,$P_+$ which do not intersect $\mathcal{C}$. Moreover $S$ is contained between $P_-$ and $P_+$.
\item Since $S$ is contained between two parallel planes close to $x$, the principal curvatures of $S$ in a neighborhood of $x$ cannot be too large. In particular, we use Schauder theory to show that the principal curvatures of $S$ at a point $x$ are uniformly bounded in terms of the distance from $P_-$ of points in a neighborhood of $x$.
\item Finally, the distance from $P_-$ of points of $S$ in a neighborhood of $x$  is estimated in terms of the distance of points in $P_+$ from $P_-$, hence is bounded in terms of the Bers norm $||\Psi||_{\mathcal B}$.
\end{enumerate}
It is important to remark that the estimates we give are uniform, in the sense that they do not depend on the point $x$ or on the surface $S$, but just on the Bers norm of the quasicircle at infinity. The above heuristic arguments are formalized in the following subsections.

\subsection{Description from infinity} \label{infinity}
The main result of this part is the following. See Figure \ref{parallel_planes}.

\begin{prop} \label{computation width bers norm}
Let $A<1/2$. Given an embedded minimal disc $S$ in $\Hyp^3$ with boundary at infinity a quasicircle $\partial_\infty S=\Psi(S^1)$ with $||\Psi||_{\mathcal{B}}\leq A$, every point of $S$ lies on a geodesic segment of length at most
$\arctanh(2A)$ orthogonal at the endpoints to two planes $P_-$ and $P_+$, such that the convex hull $\mathcal{CH}(\Gamma)$ is contained between $P_-$ and $P_+$.
\end{prop}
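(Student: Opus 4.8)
The plan is to use the technique of ``description from infinity'' (following \cite{epstein} and \cite{schkraRenVol}) to build a foliation of a convex region of $\Hyp^3$ by equidistant surfaces all having the same asymptotic boundary $\Gamma = \Psi(S^1)$, and then to use the relation between the shape operator of the leaves and the Schwarzian derivative $S_\Psi$ to control distances. Concretely, starting from the conformal metric at infinity on $\D^*$ (the component of $\partial_\infty\Hyp^3\setminus\Gamma$ on which $\Psi$ is conformal), the Epstein construction assigns to each $t$ an equidistant surface $F_t$; a theorem of \cite{zograf} and \cite[Appendix]{schkraRenVol} expresses the principal curvatures of these surfaces in terms of the hyperbolic metric and the holomorphic quadratic differential $S_\Psi\, dz^2$. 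The hypothesis $\|\Psi\|_{\mathcal B}\leq A < 1/2$ bounds $\|S_\Psi\|_\infty$, which via these formulae guarantees that for a suitable range of the equidistant parameter the leaves $F_t$ are smooth embedded surfaces, one of which, call it $F_+$, is concave (more precisely has a definite sign of principal curvatures), and another, $F_-$, is convex, with $F_\pm$ lying on opposite sides of the convex hull $\mathcal{CH}(\Gamma)$. The key quantitative output of this step is an explicit estimate $d_{\Hyp^3}(F_-,F_+)\leq \arctanh(2A)$ for the distance between these two leaves, measured along the orthogonal geodesics of the foliation; since all leaves are equidistant, through every point of $\Hyp^3$ lying between $F_-$ and $F_+$ there passes such an orthogonal geodesic segment of length at most $\arctanh(2A)$.

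Next I would invoke Corollary \ref{minimal surface contained convex hull}: the minimal disc $S$ is contained in $\mathcal{CH}(\Gamma)$, hence between $F_-$ and $F_+$. Therefore every point $x\in S$ lies on one of the orthogonal geodesic segments of the foliation, of length at most $\arctanh(2A)$, joining a point $x_-\in F_-$ to a point $x_+\in F_+$. Taking $P_-$ to be the totally geodesic plane tangent to $F_-$ at $x_-$ and $P_+$ the one tangent to $F_+$ at $x_+$: because $F_-$ is convex and lies outside $\mathcal{CH}(\Gamma)$ on one side, the supporting plane $P_-$ does not meet the convex hull and leaves $\mathcal{CH}(\Gamma)$ — hence $S$ — entirely on one side; symmetrically for $P_+$ on the other side. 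The geodesic through $x$ is orthogonal to $F_\pm$ at $x_\pm$, hence orthogonal to the tangent planes $P_\pm$ at those same points, and $\mathcal{CH}(\Gamma)$ is squeezed between $P_-$ and $P_+$, which is exactly the assertion. One should check that orthogonality of the foliation's geodesic to the leaf passes to the tangent plane (immediate, same tangent space) and that the ``outside the convex hull on opposite sides'' property of $F_\pm$ really forces $\partial_\infty P_\pm$ to be disjoint from $\Gamma$ — this follows from the construction since $F_\pm$ are Epstein surfaces of metrics in the two ends, lying in the respective complementary regions of the convex core.

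The main obstacle I anticipate is the first step: extracting the clean numerical bound $\arctanh(2A)$ from the Epstein/Schwarzian machinery, and verifying that for $\|S_\Psi\|_\infty$ controlled by $A<1/2$ the relevant equidistant leaves are genuinely embedded with the required convexity/concavity (this is where Nehari's bound and the $A<1/2$ threshold enter, matching the radius-$1/2$ ball in the image of the Bers embedding). The differential-geometric identities for equidistant surfaces — how principal curvatures evolve along the equidistant flow, and the formula linking the second fundamental form at infinity to $\mathrm{Re}(S_\Psi)$ plus the trace term — are standard but must be combined carefully to see that the width of the region foliated by ``good'' leaves is exactly $\arctanh$ of twice the norm; I would set this up by normalizing so that the convex hull boundary corresponds to the equidistant parameter where a principal curvature changes sign, and integrate the resulting ODE for the distance. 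Everything after that is a soft convexity/support-plane argument together with the already-proved containment of $S$ in $\mathcal{CH}(\Gamma)$.
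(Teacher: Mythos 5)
Your outline follows the paper's route: Epstein-type foliation from infinity, Schwarzian control of the data at infinity, a convex and a concave leaf sandwiching $\mathcal{CH}(\Gamma)$, containment of $S$ via Corollary \ref{minimal surface contained convex hull}, and tangent planes to the two leaves as $P_\pm$. Those soft steps are all sound. However, the step you defer as ``the main obstacle'' --- extracting the bound $\arctanh(2A)$ and verifying that leaves with the required convexity/concavity exist at all --- is the entire quantitative content of the proposition, and it is missing from your argument; moreover the mechanism you sketch for it is not the right one. The paper's derivation is purely algebraic, with no ODE to integrate: take $I^*$ to be the hyperbolic metric in the conformal class of $\Omega$, so that $\mathrm{tr}\,B^*=-K_{I^*}=1$ by Lemma \ref{gauss codazzi infinity}; write $B^*=B_0^*+\tfrac12 E$ and use Theorem \ref{teorema real part schwartzian} together with Lemma \ref{determinant and bers norm}, which gives $\sup_\Omega|\det B_0^*|=\|\Psi\|^2_{\mathcal B}$. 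Hence the eigenvalues of $B^*$ are $\tfrac12\pm a$ with $\sup a=\|\Psi\|_{\mathcal B}\leq A<\tfrac12$, so $B^*$ is positive definite; it is this positivity (not Nehari's radius-$1/2$ ball, which is only the right heuristic) that makes Theorem \ref{fundamental theorem infinity} applicable and produces the equidistant foliation in the first place. Then the closed formula \eqref{shape operator formula} gives the eigenvalues of $B_\rho$ explicitly, namely $\lambda_\rho=\frac{-2e^{2\rho}+(1+2a)}{2e^{2\rho}+(1+2a)}$ and $\lambda'_\rho=\frac{-2e^{2\rho}+(1-2a)}{2e^{2\rho}+(1-2a)}$, which change sign at $e^{2\rho}=\tfrac12+a$ and $e^{2\rho}=\tfrac12-a$; taking the extremal values over the surface, the concave and convex thresholds differ by $\rho_1-\rho_2=\tfrac12\log\frac{1+2A}{1-2A}=\arctanh(2A)$.

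Note in particular that the two principal curvatures of $S_\rho$ change sign at \emph{different} values of $\rho$ (unless $a=0$), and the width $\arctanh(2A)$ is precisely the gap between these two sign-change parameters; the boundary of the convex hull is not a leaf of the foliation, so your plan to ``normalize so that the convex hull boundary corresponds to the equidistant parameter where a principal curvature changes sign'' does not quite make sense as stated. Once the computation above is in place, the rest of your argument (orthogonal geodesics of the normal flow through every point between the two leaves, tangent planes of the convex/concave leaves disjoint from $\mathcal{CH}(\Gamma)$, hence $S$ trapped between $P_-$ and $P_+$) matches the paper's conclusion.
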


\begin{figure}[htbp]
\centering
\includegraphics[height=6cm]{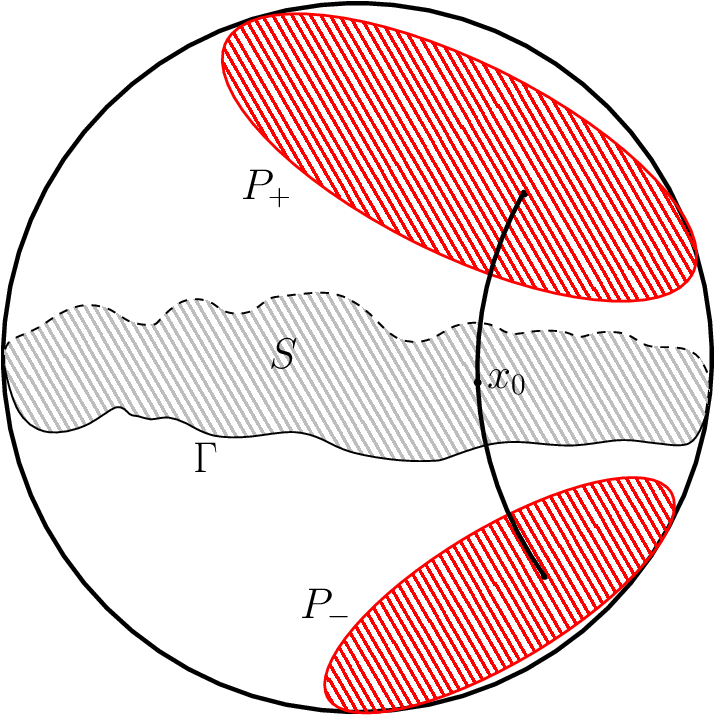}
\caption{The statement of Proposition \ref{computation width bers norm}. The geodesic segment through $x_0$ has length $\leq w$, for $w=\arctanh(2||\Psi||_{\mathcal{B}})$, and this does not depend on $x_0\in S$.} \label{parallel_planes}
\end{figure}

\begin{remark} \label{remark width}
A consequence of Proposition \ref{computation width bers norm} is that the Hausdorff distance between the two boundary components of $\mathcal{CH}(\Gamma)$ is bounded by $\arctanh(2||\Psi||_{\mathcal{B}})$. Hence it would be natural to try to define in such a way a notion of \emph{thickness} or \emph{width} of the convex hull:
$$w(\Gamma)=\max\{\inf_{x\in\partial_- \mathcal{CH}(\Gamma)}d(x,\partial_+ \mathcal{CH}(\Gamma)), \inf_{x\in\partial_+ \mathcal{CH}(\Gamma)}d(x,\partial_- \mathcal{CH}(\Gamma))\}$$
 However, a bound on the Hausdorff distance is not sufficient for the purpose of this paper. It will become clear in the proof of Theorem \ref{hyperbolic close to identity quantitative} and Theorem \ref{hyperbolic close to identity}, and in particular for the application of Lemma \ref{lemma distance formulae}, that the necessary property is the existence of two support planes which are \emph{both} orthogonal to a geodesic segment of short length through any point $x_0\in S$.
\end{remark}

We review here some important facts on the so-called description from infinity of surfaces in hyperbolic space. For details, see \cite{epstein} and \cite{schkraRenVol}. Given an embedded surface $S$ in $\Hyp^3$ with bounded principal curvatures, let $I$ be its first fundamental form and $\II$ the second fundamental form. Recall we defined $B=-\nabla N$ its shape operator, for $N$ the oriented unit normal vector field (we fix the convention that $N$ points towards the $x_4>0$ direction in $\R^{3,1}$), so that $\II=I(B\cdot,\cdot)$. Denote by $E$ the identity operator. Let $S_\rho$ be the $\rho$-equidistant surface from $S$ (where the sign of $\rho$ agrees with the choice of unit normal vector field to $S$). For small $\rho$, there is a map from $S$ to $S_\rho$ obtained following the geodesics orthogonal to $S$ at every point.

\begin{lemma} \label{lemma equidistant surfaces hyp}
Given a smooth surface $S$ in $\Hyp^3$, let $S_\rho$ be the surface at distance $\rho$ from $S$, obtained by following the normal flow at time $\rho$. Then the pull-back to $S$ of the induced metric on the surface $S_\rho$ is given by:
\begin{equation}
I_\rho=I((\cosh(\rho) E-\sinh(\rho) B)\cdot,(\cosh(\rho) E-\sinh(\rho) B)\cdot)\,.
\end{equation}
\noindent The second fundamental form and the shape operator of $S_\rho$ are given by
\begin{gather}
\II_\rho=I((-\sinh(\rho) E+\cosh(\rho) B)\cdot,(\cosh(\rho) E-\sinh(\rho) B)\cdot) \\
B_\rho=(\cosh(\rho) E-\sinh(\rho) B)^{-1}(-\sinh(\rho) E+\cosh(\rho) B)\,.
\end{gather}
\end{lemma}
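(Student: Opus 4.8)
\textbf{Proof proposal for Lemma \ref{lemma equidistant surfaces hyp}.}

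The plan is to compute directly in the hyperboloid model, parametrizing the normal flow and differentiating. Fix a local parametrization $\sigma: U \subset \R^2 \to S \subset \Hyp^3$ and let $N$ be the chosen unit normal field along $\sigma$, viewed as an $\R^{3,1}$-valued map with $\langle N, N\rangle = 1$, $\langle N, \sigma\rangle = 0$, and $\langle N, \partial_i \sigma\rangle = 0$. The geodesic in $\Hyp^3$ leaving $\sigma(p)$ in direction $N(p)$ is $\gamma_p(\rho) = \ch(\rho)\,\sigma(p) + \sh(\rho)\,N(p)$, as one checks from the geodesic equation in the hyperboloid model (this is the analogue of the formula $\ch d = |\langle p, q\rangle|$). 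So the map realizing the normal flow at time $\rho$ is
$$\sigma_\rho(p) = \ch(\rho)\,\sigma(p) + \sh(\rho)\,N(p)\,,$$
and $S_\rho$ is its image for $\rho$ small enough that $\sigma_\rho$ is an immersion.

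First I would compute $I_\rho$. Differentiating in the surface directions gives $\partial_i \sigma_\rho = \ch(\rho)\,\partial_i\sigma + \sh(\rho)\,\partial_i N$. By definition of the shape operator $B = -\nabla N$, and since the tangential projection of $\partial_i N$ equals $-B(\partial_i\sigma)$ while the normal ($N$) and radial ($\sigma$) components of $\partial_i N$ vanish (because $\langle N,N\rangle$ and $\langle N,\sigma\rangle$ are constant, so their derivatives vanish), we get $\partial_i N = -B(\partial_i\sigma)$ as $\R^{3,1}$-vectors along the surface. Hence $\partial_i\sigma_\rho = (\ch(\rho)E - \sh(\rho)B)(\partial_i\sigma)$, and taking inner products yields exactly $I_\rho = I((\ch(\rho)E - \sh(\rho)B)\cdot, (\ch(\rho)E - \sh(\rho)B)\cdot)$. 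This also shows that the pulled-back tangent frame of $S_\rho$ is the image of the tangent frame of $S$ under the operator $T_\rho := \ch(\rho)E - \sh(\rho)B$, which is invertible for small $\rho$ since $B$ has bounded principal curvatures.

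Next I would identify the unit normal $N_\rho$ to $S_\rho$. The natural candidate is $N_\rho(p) = \sh(\rho)\,\sigma(p) + \ch(\rho)\,N(p)$: it satisfies $\langle N_\rho, N_\rho\rangle = 1$, and $\langle N_\rho, \partial_i\sigma_\rho\rangle = 0$ by a direct expansion using $\langle\sigma,\partial_i\sigma\rangle = \langle N,\partial_i N\rangle = 0$ and $\langle\sigma, \partial_i N\rangle = -\langle\partial_i\sigma, N\rangle = 0$; moreover $\langle N_\rho, \sigma_\rho\rangle = 0$, so $N_\rho$ is tangent to $\Hyp^3$ along $S_\rho$ and normal to $S_\rho$. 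Then $\II_\rho$ is obtained from $-\langle \partial_i N_\rho, \partial_j\sigma_\rho\rangle$ (the second fundamental form as a $(2,0)$-tensor). Computing $\partial_i N_\rho = \sh(\rho)\,\partial_i\sigma + \ch(\rho)\,\partial_i N = (\ch(\rho)(-B) + \sh(\rho)E)(\partial_i\sigma) = -(\ch(\rho)B - \sh(\rho)E)(\partial_i\sigma)$ — wait, more carefully: $\partial_i N_\rho = \sh(\rho)\partial_i\sigma - \ch(\rho)B(\partial_i\sigma) = -(-\sh(\rho)E + \ch(\rho)B)(\partial_i\sigma)$. Pairing with $\partial_j\sigma_\rho = T_\rho(\partial_j\sigma)$ and using the symmetry of $B$ with respect to $I$ gives $\II_\rho = I((-\sh(\rho)E + \ch(\rho)B)\cdot, (\ch(\rho)E - \sh(\rho)B)\cdot)$, as claimed. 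Finally, the shape operator $B_\rho$ is characterized by $\II_\rho = I_\rho(B_\rho\cdot,\cdot)$; writing $I_\rho = I(T_\rho\cdot, T_\rho\cdot)$ and $\II_\rho = I((-\sh(\rho)E+\ch(\rho)B)\cdot, T_\rho\cdot)$ and using that $T_\rho$, $B$ (hence $-\sh(\rho)E+\ch(\rho)B$) are all $I$-self-adjoint and commute, one solves $T_\rho^* B_\rho = -\sh(\rho)E + \ch(\rho)B$, i.e. $B_\rho = T_\rho^{-1}(-\sh(\rho)E + \ch(\rho)B) = (\ch(\rho)E - \sh(\rho)B)^{-1}(-\sh(\rho)E + \ch(\rho)B)$.

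The only genuinely delicate point is bookkeeping the decomposition of $\partial_i N$ into its components along $\sigma$, $N$, and $T_pS$, and likewise checking that $N_\rho$ as defined is genuinely the unit normal of $S_\rho$ (sign and normalization); once the ambient $\R^{3,1}$ identities $\langle\sigma,\sigma\rangle=-1$, $\langle N,N\rangle=1$, $\langle N,\sigma\rangle=0$ and their derivatives are in hand, everything else is linear algebra with the commuting self-adjoint operators $E$, $B$, $T_\rho$. I would present the computation of $I_\rho$ in full and then indicate the analogous (shorter) computations for $\II_\rho$ and $B_\rho$.
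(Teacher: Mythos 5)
Your proposal is correct and follows essentially the same route as the paper: both work in the hyperboloid model with the normal flow $\sigma_\rho=\cosh(\rho)\,\sigma+\sinh(\rho)\,N$ and obtain $I_\rho$ by differentiating and using $\partial_i N=-B(\partial_i\sigma)$. The only (harmless) difference is that the paper deduces the second fundamental form from the variation formula $\II_\rho=-\tfrac12\frac{dI_\rho}{d\rho}$, whereas you differentiate the explicitly constructed unit normal $N_\rho=\sinh(\rho)\,\sigma+\cosh(\rho)\,N=\gamma'(\rho)$; both yield the stated formulas for $\II_\rho$ and $B_\rho$.
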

\begin{proof}
In the hyperboloid model, let $\sigma:\D\rar\Hyp^2$ be the minimal embedding of the surface $S$, with oriented unit normal $N$. The geodesics orthogonal to $S$ at a point $x$ can be written as
$$\gamma_x(\rho)=\cosh(\rho)\sigma(x)+\sinh(\rho)N(x)\,.$$
Then we compute
\begin{align*}
I_\rho(v,w)=&\langle d\gamma_x(\rho)(v),d\gamma_x(\rho)(w) \rangle \\
=&\langle \cosh(\rho)d\sigma_x(v)+\sinh(\rho)dN_x(v),\cosh(r)d\sigma_x(w)+\sinh(\rho)dN_x(w)\rangle \\
=&I(\cosh(\rho)v-\sinh(\rho)B(v),\cosh(\rho)w-\sinh(\rho)B(w))\,.
\end{align*}
The formula for the second fundamental form follows from the fact that $\II_\rho=-\frac{1}{2}\frac{dI_\rho}{d\rho}$.
\end{proof}

It follows that, if the principal curvatures of a minimal surface $S$ are $\lambda$ and $-\lambda$, then the principal curvatures of $S_\rho$ are 
\begin{equation} \label{formulae prin curv equi} 
\lambda_\rho=\frac{\lambda-\tanh(\rho)}{1-\lambda\tanh(\rho)}\qquad\lambda'_\rho=\frac{-\lambda-\tanh(\rho)}{1+\lambda\tanh(\rho)}\,.
\end{equation}
 In particular, if $-1\leq\lambda\leq 1$, then $I_\rho$ is a non-singular metric for every $\rho$. The surfaces $S_\rho$ foliate $\Hyp^3$ and they all have asymptotic boundary $\partial_\infty S_\rho=\partial_\infty S$.

We now define the first, second and third fundamental form at infinity associated to $S$. Recall the second and third fundamental form of $S$ are $\II=I(B\cdot,\cdot)$ and $\III=I(B\cdot,B\cdot)$.
\begin{gather}
I^*=\lim_{\rho\rar\infty}2e^{-2\rho}I_\rho=\frac{1}{2}I((E-B)\cdot,(E-B)\cdot)=\frac{1}{2}(I-2\II+\III) \\
B^*=(E-B)^{-1}(E+B) \\
\II^*=\frac{1}{2}I((E+B)\cdot,(E-B)\cdot)=I^*(B^*\cdot,\cdot) \\
\III^*=I^*(B^*\cdot,B^*\cdot)
\end{gather}
We observe that the metric $I_\rho$ and the second fundamental form can be recovered as
\begin{gather}
I_\rho=\frac{1}{2}e^{2\rho}I^*+\II^*+\frac{1}{2}e^{-2\rho}\III^* \label{first fundamental form formula}\\
\II_\rho=-\frac{1}{2}\frac{dI_\rho}{d\rho}=\frac{1}{2}I^*((e^\rho E+e^{-\rho} B^*)\cdot,(-e^\rho E+e^{-\rho} B^*)\cdot) \\
B_\rho=(e^\rho E+e^{-\rho} B^*)^{-1} (-e^\rho E+e^{-\rho} B^*) \label{shape operator formula}
\end{gather}
The following relation can be proved by some easy computation:
\begin{lemma}[{\cite[Remark 5.4 and 5.5]{schkraRenVol}}] \label{gauss codazzi infinity}
The embedding data at infinity $(I^*,B^*)$ associated to an embedded surface $S$ in $\Hyp^3$ satisfy the equation
\begin{equation} \label{gauss infinity}
tr(B^*)=-K_{I^*}\,,
\end{equation}
 where $K_{I^*}$ is the curvature of $I^*$. Moreover, $B^*$ satisfies the Codazzi equation with respect to $I^*$:
 \begin{equation} \label{codazzi infinity}
 d^{\nabla_{\! I^*}}\!B^*=0\,.
 \end{equation}
\end{lemma}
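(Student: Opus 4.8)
The plan is to reduce both identities to the Gauss and Codazzi equations satisfied by $S$ as a surface of $\Hyp^3$, by writing down explicitly the Levi-Civita connection of $I^*$ in terms of that of $I$.

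Set $A=E-B$; since $B$ is $I$-self-adjoint so is $A$, and from $E+B=2E-A$ one reads off $B^*=(E-B)^{-1}(E+B)=2A^{-1}-E$ and $I^*=\tfrac12\,I(A\cdot,A\cdot)$. Because $\Hyp^3$ has constant curvature, $S$ satisfies the Codazzi equation $d^{\nabla_I}B=0$, where $\nabla_I$ is the Levi-Civita connection of $I$; equivalently, $A$ is a Codazzi tensor for $I$. The first step is to verify the elementary fact that the Levi-Civita connection $\nabla_{I^*}$ of $I^*$ is given by
\[
(\nabla_{I^*})_X Y=A^{-1}(\nabla_I)_X(AY).
\]
Metric compatibility with $I^*=\tfrac12 I(A\cdot,A\cdot)$ is a one-line computation, and torsion-freeness is exactly the statement that $A$ is Codazzi, so this formula characterizes $\nabla_{I^*}$.

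Granting this, the Codazzi equation at infinity is a direct computation: plugging the formula into $(\nabla_{I^*})_X B^*(Y)=(\nabla_{I^*})_X(B^*Y)-B^*(\nabla_{I^*})_X Y$ and using only the identities $AB^*=E+B$, $A^{-1}(E+B)=B^*$ and $E+B^*=2A^{-1}$, all the terms involving $(\nabla_I)_X Y$ cancel and one is left with
\[
(\nabla_{I^*})_X B^*(Y)=2A^{-2}\,(\nabla_I)_X B(Y),
\]
so that $d^{\nabla_{I^*}}B^*(X,Y)=2A^{-2}\,(d^{\nabla_I}B)(X,Y)=0$. For the Gauss equation at infinity, the same connection formula gives, for the curvature endomorphisms, $R_{I^*}(X,Y)=A^{-1}\circ R_I(X,Y)\circ A$; evaluating this on an $I$-orthonormal eigenbasis of $A$ (i.e. the principal directions of $S$) and using the two-dimensional identity $R_I(X,Y)Z=K_I\bigl(I(Y,Z)X-I(X,Z)Y\bigr)$ yields $K_{I^*}=2K_I/\det A$. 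The Gauss equation of $S$ in $\Hyp^3$ reads $K_I=-1+\det B$, while the $2\times2$ identities $\tr(A^{-1})=\tr(A)/\det A$, $\tr A=2-\tr B$, $\det A=1-\tr B+\det B$ give $\tr(B^*)=\tr(2A^{-1}-E)=(2-2\det B)/\det A$; comparing the two expressions shows $\tr(B^*)=-2K_I/\det A=-K_{I^*}$. (Alternatively, the Gauss identity also drops out by comparing leading terms as $\rho\to\infty$ in $K_{I_\rho}=-1+\det B_\rho$, using \eqref{first fundamental form formula} and \eqref{shape operator formula}.)

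None of this is deep — as the cited reference says, it is an easy computation — so I do not expect a genuine obstacle. The one place I would be careful is the Codazzi computation: one must keep straight which objects are functions of $B$ (hence mutually commuting: $B$, $B^*$, $A^{\pm1}$) and which are not (the tensor $\nabla_I B$ does not commute with them), and also keep track of the factor $\tfrac12$ in $I^*=\tfrac12 I(A\cdot,A\cdot)$, which is what produces the $2$ in $K_{I^*}=2K_I/\det A$. For that reason I would carry the Codazzi identity out symbol by symbol rather than quote it.
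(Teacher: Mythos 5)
Your proof is correct, and since the paper does not actually prove this lemma (it only cites Remarks 5.4 and 5.5 of Krasnov--Schlenker and calls it an easy computation), what you have written is a genuinely self-contained verification rather than a reproduction of an argument in the text. The key device — that for the $I$-self-adjoint, invertible Codazzi operator $A=E-B$ the Levi-Civita connection of $I^*=\tfrac12 I(A\cdot,A\cdot)$ is the conjugated connection $(\nabla_{I^*})_XY=A^{-1}(\nabla_I)_X(AY)$ — is exactly right: metric compatibility is immediate, torsion-freeness is equivalent to $d^{\nabla_I}A=0$, and this is where the Codazzi equation of $S$ in the constant-curvature ambient space enters. I checked the two computations that follow: the cancellation giving $(\nabla_{I^*})_XB^*(Y)=2A^{-2}(\nabla_I)_XB(Y)$ is correct (using $B^*A=E+B$ and $E+B^*=2A^{-1}$, and the fact that $A^{\pm1},B,B^*$ mutually commute), and the curvature identity $R_{I^*}(X,Y)=A^{-1}\circ R_I(X,Y)\circ A$ together with the factor $\tfrac12$ in $I^*$ gives $K_{I^*}=2K_I/\det A$, which combined with the Gauss equation $K_I=-1+\det B$ and the $2\times2$ trace identities yields $\tr(B^*)=(2-2\det B)/\det A=-K_{I^*}$. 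The cited reference reaches the same identities by working with the equidistant foliation and the expansion $I_\rho=\tfrac12 e^{2\rho}I^*+\II^*+\tfrac12 e^{-2\rho}\III^*$ (essentially your parenthetical alternative of comparing leading terms in $K_{I_\rho}=-1+\det B_\rho$ as $\rho\to\infty$); your intrinsic route buys a cleaner conceptual statement (conjugation of the connection by a Codazzi operator) at the cost of having to assume, as the definition of the data at infinity already implicitly does, that $E-B$ is invertible so that $I^*$ is a genuine metric.
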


A partial converse of this fact, which can be regarded as a fundamental theorem from infinity, is the following theorem. This follows again by the results in \cite{schkraRenVol}, although it is not stated in full generality here.
\begin{theorem} \label{fundamental theorem infinity}
Given a Jordan curve $\Gamma\subset\partial_\infty \Hyp^3$, let $(I^*,B^*)$ be a pair of a metric in the conformal class of a connected component of $\partial_\infty \Hyp^3\setminus\Gamma$ and a self-adjoint $(1,1)$-tensor, satisfiying the conditions \eqref{gauss infinity} and \eqref{codazzi infinity} as in Lemma \ref{gauss codazzi infinity}. Assume the eigenvalues of $B^*$ are positive at every point. Then there exists a foliation of $\Hyp^3$ by equidistant surfaces $S_\rho$, for which the first fundamental form at infinity (with respect to $S=S_0$) is $I^*$ and the shape operator at infinity is $B^*$.
\end{theorem}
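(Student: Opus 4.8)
The plan is to reverse the computations of the description-from-infinity formalism: starting from the data $(I^*,B^*)$ on one component $\Omega$ of $\partial_\infty\Hyp^3\setminus\Gamma$, I would construct, for each real parameter $\rho$, a Riemannian metric $I_\rho$ and a symmetric $(1,1)$-tensor $B_\rho$ on $\Omega$ by the explicit formulas \eqref{first fundamental form formula} and \eqref{shape operator formula}, and then show that the family $\{(I_\rho,B_\rho)\}$ is realized by a smooth equidistant foliation of (a neighborhood in) $\Hyp^3$. Concretely, the first step is to verify that for each fixed $\rho$ the pair $(I_\rho,B_\rho)$ satisfies the Gauss and Codazzi equations of $\Hyp^3$, i.e. $K_{I_\rho}=-1+\det B_\rho$ and $d^{\nabla_{I_\rho}}B_\rho=0$. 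This is a direct algebraic consequence of \eqref{gauss infinity} and \eqref{codazzi infinity}: substituting $B_\rho=(e^\rho E+e^{-\rho}B^*)^{-1}(-e^\rho E+e^{-\rho}B^*)$ and expanding, the trace condition $\tr(B^*)=-K_{I^*}$ together with $I_\rho=\tfrac12 e^{2\rho}I^*+\II^*+\tfrac12 e^{-2\rho}\III^*$ yields the hyperbolic Gauss equation, and the Codazzi equation at infinity propagates to Codazzi for $B_\rho$ because the connections $\nabla_{I_\rho}$ and $\nabla_{I^*}$ are intertwined by the endomorphism $e^\rho E+e^{-\rho}B^*$, which is itself Codazzi.

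Granting the Gauss--Codazzi equations for each $\rho$, the fundamental theorem of surface theory in $\Hyp^3$ gives, for each $\rho$ and each simply connected piece, an isometric immersion $\sigma_\rho\colon(\Omega,I_\rho)\to\Hyp^3$ with shape operator $B_\rho$, unique up to post-composition with an isometry of $\Hyp^3$. The point is to choose these immersions coherently in $\rho$ so that they fit together into a foliation. The natural way is to fix $\sigma_0$ realizing $(I_0,B_0)$ and then \emph{define} the foliation by the normal exponential map $(x,\rho)\mapsto \exp_{\sigma_0(x)}(\rho N(x))$; Lemma \ref{lemma equidistant surfaces hyp} then shows that the induced data on the $\rho$-level set are precisely $(I_\rho,B_\rho)$, so the construction is consistent, and the positivity hypothesis on the eigenvalues of $B^*$ guarantees that $e^\rho E+e^{-\rho}B^*$ is invertible for all $\rho\in\R$, so that $I_\rho$ is a genuine (nondegenerate, positive-definite) metric for every $\rho$ and the normal flow never develops focal points — hence the map is a diffeomorphism onto an open subset of $\Hyp^3$ and the level sets foliate it. One then checks, using formula \eqref{first fundamental form formula}, that $\lim_{\rho\to\infty}2e^{-2\rho}I_\rho=I^*$, recovering the prescribed conformal structure on $\Omega$, and that the asymptotic boundary of every leaf is the curve $\Gamma=\partial\Omega$: this uses that $I^*$ lies in the conformal class of $\Omega$, so the conformal boundary of the leaves is identified with $\overline\Omega\cap\partial_\infty\Hyp^3=\Gamma$.

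The main obstacle I expect is \emph{global} rather than local: the surface-theory argument produces the immersion only on simply connected pieces and a priori only for $\rho$ in a small interval, so the real work is (i) showing the normal flow stays an embedding for all $\rho\in\R$ — which is where the strict positivity of the eigenvalues of $B^*$ is essential, since it prevents $\cosh\rho\,E-\sinh\rho\,B$ (equivalently $e^\rho E+e^{-\rho}B^*$ up to conjugation) from becoming singular — and (ii) verifying that the union of the leaves is all of $\Hyp^3$ and not merely an open subset, equivalently that both ``ends'' $\rho\to\pm\infty$ of the foliation exhaust the two sides of $\Gamma$; this is the content of the cited results in \cite{schkraRenVol} and for the purposes of this paper (where one only needs two leaves $F_\pm$ trapping the convex hull) one could even get away with the weaker statement that the flow is defined and embedded on a large interval. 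I would organize the write-up so that the algebraic Gauss--Codazzi verification is a short lemma, the local existence is quoted from the fundamental theorem of submanifolds, and the global extension and asymptotic-boundary identification are deduced from Lemma \ref{lemma equidistant surfaces hyp} and \eqref{first fundamental form formula} together with the eigenvalue positivity, referring to \cite{epstein,schkraRenVol} for the parts that are standard in that framework.
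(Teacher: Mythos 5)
The paper itself gives no proof of Theorem \ref{fundamental theorem infinity}: it is quoted from \cite{schkraRenVol} (with \cite{epstein} in the background), so there is no in-paper argument to match your proposal against. Your outline --- verify that the pairs $(I_\rho,B_\rho)$ of \eqref{first fundamental form formula} and \eqref{shape operator formula} satisfy the Gauss and Codazzi equations of $\Hyp^3$ as a consequence of \eqref{gauss infinity} and \eqref{codazzi infinity}, integrate the data at $\rho=0$ by the fundamental theorem of surface theory, propagate by the normal flow using Lemma \ref{lemma equidistant surfaces hyp}, and use positivity of the eigenvalues of $B^*$ to keep $e^\rho E+e^{-\rho}B^*$ invertible for all $\rho$ (so $I_\rho$ stays nondegenerate and every leaf has principal curvatures in $(-1,1)$) --- is the right circle of ideas, and is essentially the computation of \cite{schkraRenVol}, who package it as: the metric $d\rho^2+I_\rho$ on $\Omega\times\R$ has constant curvature $-1$ exactly when the equations at infinity hold.

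The genuine gap is the asymptotic-boundary identification. The fundamental theorem of surfaces produces an immersion of the abstract disc only up to post-composition with isometries of $\Hyp^3$, and your justification that the leaves limit to $\Gamma$ --- ``$I^*$ lies in the conformal class of $\Omega$, so the conformal boundary of the leaves is $\overline{\Omega}\cap\partial_\infty\Hyp^3=\Gamma$'' --- does not work: every simply connected domain is conformally a disc, and two conformally equivalent domains of $\widehat{\C}$ need not be M\"obius-equivalent, so the conformal class of the data at infinity cannot single out which quasidisc of $\partial_\infty\Hyp^3$ the constructed end limits to. What does determine $\Omega$ up to M\"obius transformation is the complex projective structure at infinity, which is encoded in the pair $(I^*,\II^*)$ via the Schwarzian derivative (this is precisely the content of Theorem \ref{teorema real part schwartzian}); alternatively --- and this is what \cite{epstein} and \cite{schkraRenVol} actually do --- one works directly on the given domain $\Omega\subset\partial_\infty\Hyp^3$ and builds the leaves by the Epstein construction, as envelopes of horospheres associated with the conformal metrics $\tfrac12 e^{2\rho}I^*$ on $\Omega$, so that $\partial_\infty S_\rho=\Gamma$ holds by construction and no abstract integration step is needed. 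A more minor omission: to obtain a foliation of all of $\Hyp^3$, rather than of an open end, one needs the leaves to be complete and properly embedded, which requires completeness of $I^*$ (as in the paper's application, where $I^*$ is the complete hyperbolic metric of $\Omega$) combined with the fact that the normal flow from a complete surface with principal curvatures in $(-1,1)$ is a diffeomorphism onto $\Hyp^3$; your sketch raises the exhaustion issue but does not identify completeness as the hypothesis that resolves it.
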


We want to give a relation between the Bers norm of the quasicircle $\Gamma$ and the existence of a foliation of $\Hyp^3$ by equidistant surfaces with boundary $\Gamma$, containing both convex and concave surfaces. We identify $\partial_\infty \Hyp^3$ to $\widehat\C$ by means of the stereographic projection, so that $\D$ correponds to the lower hemisphere of the sphere at infinity. The following property will be used, see \cite{zograf} or \cite[Appendix A]{schkraRenVol}.

\begin{theorem} \label{teorema real part schwartzian}
Let $\Gamma\subset \partial_\infty\Hyp^3$ be a Jordan curve. If $I^*$ is the complete hyperbolic metric in the conformal class of a connected component $\Omega$ of $\partial_\infty\Hyp^3\setminus\Gamma$, and $\II_0^*$ is the traceless part of the second fundamental form at infinity $\II^*$, then $-\II_0^*$ is the real part of the Schwarzian derivative of the isometry $\Psi:\D^* \rar\Omega$, namely the map $\Psi$ which uniformizes the conformal structure of $\Omega$:
\begin{equation} \label{thm TZ SK}
\II_0^*=-Re(S_\Psi)\,.
\end{equation}

\end{theorem}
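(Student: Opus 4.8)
The plan is to reduce the identity to a local computation in a conformal coordinate on $\Omega$ and to carry it out using the explicit description from infinity of the equidistant foliation determined by the complete hyperbolic metric $I^*$, recognizing the outcome as the Schwarzian of $\Psi$ via the cocycle rule for Schwarzian derivatives. The statement is local on $\Omega$ and equivariant under the M\"obius group of $\widehat\C=\partial_\infty\Hyp^3$ --- the description from infinity commutes with isometries of $\Hyp^3$, the complete hyperbolic metric in a conformal class is canonical, and the Schwarzian of a M\"obius map vanishes --- so we may freely work in a single chart.

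\emph{Step 1: the traceless part of $\II^*$ in terms of the conformal factor.} In a conformal coordinate $z$ on a piece of $\Omega$, write $I^*=e^{2\varphi(z)}|dz|^2$; having curvature $-1$ is Liouville's equation $4\varphi_{z\bar z}=e^{2\varphi}$. By the Epstein construction (cf.\ \cite{epstein} and \cite[Appendix]{schkraRenVol}), the foliation $\{S_\rho\}$, hence the pair $(I^*,\II^*)$ defined above, is explicitly determined by $\varphi$; decomposing $\II^*$ into $I^*$-trace and $I^*$-traceless parts, the trace part is fixed by the Gauss equation \eqref{gauss infinity}, $\mathrm{tr}(B^*)=-K_{I^*}=1$, while a short computation of the Epstein map identifies the traceless part as
\[
\II_0^*=2\,\mathrm{Re}\big((\varphi_{zz}-\varphi_z^2)\,dz^2\big),
\]
up to the overall sign dictated by the orientation conventions of this section (the choice of $N$ and of $B=-\nabla N$). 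The fact that $(\varphi_{zz}-\varphi_z^2)\,dz^2$ is a bona fide holomorphic quadratic differential is automatic: $B^*-\tfrac12 E$ is a traceless, self-adjoint, Codazzi tensor by Lemma \ref{gauss codazzi infinity}, and such tensors on a Riemann surface are precisely the real parts of holomorphic quadratic differentials.

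\emph{Step 2: inserting the uniformizing map.} In the standard coordinate $w$ on $\D^*=\{|w|>1\}$ the Poincar\'e metric is $e^{2\psi}|dw|^2$ with $e^{2\psi}=4(|w|^2-1)^{-2}$, and a one-line computation gives $\psi_{ww}-\psi_w^2=0$; that is, $w$ is a projective (``developing'') coordinate for the hyperbolic structure of $\D^*$. Since $\Psi$ is an isometry, $\varphi\circ\Psi=\psi-\log|\Psi'|$, i.e.\ $\varphi(z)=\psi(\Psi^{-1}(z))+\log|(\Psi^{-1})'(z)|$; substituting this into $\varphi_{zz}-\varphi_z^2$, using the elementary identity that the holomorphic part of $\partial^2_{zz}\log|f'|-(\partial_z\log|f'|)^2$ equals $\tfrac12 S_f$, and using $\psi_{ww}-\psi_w^2=0$, one obtains $(\varphi_{zz}-\varphi_z^2)\,dz^2=\tfrac12\,S_{\Psi^{-1}}$. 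Finally the Schwarzian cocycle $0=S_{\Psi^{-1}\circ\Psi}=(S_{\Psi^{-1}}\circ\Psi)(\Psi')^2+S_\Psi$ shows that, transported to $\D^*$ via $\Psi$, the differential $S_{\Psi^{-1}}$ equals $-S_\Psi$. Combining with Step 1 gives $\II_0^*=-\mathrm{Re}(S_\Psi)$ once the sign in Step 1 is fixed correctly.

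\emph{Expected obstacle.} The real work is that sign-and-normalization bookkeeping in Step 1: writing down the precise Epstein formula for $\II^*$ in terms of $\varphi$ (not merely its qualitative form) and checking it against the conventions for $N$, $B$, $I^*$, $\II^*$, $\III^*$ in force --- a careless computation just as easily produces $+\mathrm{Re}(S_\Psi)$. If one prefers not to re-derive the Epstein map, the ambiguous overall sign can be pinned down differently: by the last remark of Step 1, $\Psi^*\II_0^*$ and $-S_\Psi$ are both holomorphic quadratic differentials on $\D^*$ depending equivariantly on $\Psi$ and vanishing when $\Psi$ is M\"obius, so it suffices to match their first variations along a one-parameter family of quasidiscs degenerating to a round circle, where the linearization of the Bers embedding is known explicitly.
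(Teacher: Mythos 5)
A preliminary remark on the comparison: the paper does not prove Theorem \ref{teorema real part schwartzian} at all --- it is quoted from \cite{zograf} and \cite[Appendix A]{schkraRenVol} --- so your proposal can only be measured against those references, not against an in-paper argument. Your skeleton is indeed the standard one there: work in a conformal coordinate, express the data at infinity in terms of the conformal factor of $I^*=e^{2\varphi}|dz|^2$, note that the Poincar\'e factor of $\D^*$ satisfies $\psi_{ww}-\psi_w^2=0$ so that $\varphi_{zz}-\varphi_z^2=\tfrac12 S_{\Psi^{-1}}$, and convert $S_{\Psi^{-1}}$ into $-S_\Psi$ by the cocycle relation. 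Your Step 2 is correct, including the care about whether the identity lives on $\Omega$ or is pulled back to $\D^*$ (a point the paper itself glosses over in Lemma \ref{determinant and bers norm}).

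The gap is Step 1, and it is not mere bookkeeping: the identity $\II_0^*=2\,\mathrm{Re}\bigl((\varphi_{zz}-\varphi_z^2)\,dz^2\bigr)$ --- even ``up to sign'' --- is the entire analytic content of the theorem, and it is nowhere derived. Establishing it requires actually writing down the equidistant (Epstein) foliation attached to $e^{2\varphi}|dz|^2$ and computing $\II^*=\tfrac12 I((E+B)\cdot,(E-B)\cdot)$ in the paper's normalization $I^*=\lim_\rho 2e^{-2\rho}I_\rho$ with $B=-\nabla N$; the ``short computation of the Epstein map'' is asserted, not performed, and both the sign and the overall factor are exactly what that computation must deliver. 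Your fallback does not repair this: equivariance under M\"obius transformations, holomorphicity of the differential (which does follow from traceless plus Codazzi, Lemma \ref{gauss codazzi infinity}), and vanishing at round circles do not force $\Psi^*\II_0^*$ to be a \emph{universal constant} multiple of $S_\Psi$ --- a priori it could be $S_\Psi$ times a M\"obius-invariant scalar functional of the domain --- so matching first variations at the round circle can fix an unknown sign only after the proportionality, i.e.\ Step 1, has already been proved. As written, the proposal is a correct reduction of the theorem to the Epstein computation of \cite{epstein} and \cite[Appendix A]{schkraRenVol}, not a proof of it; either carry out that computation explicitly in the paper's conventions, or simply cite the result as the paper does.
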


We now derive, by straightforward computation, a useful relation. 

\begin{lemma} \label{determinant and bers norm} 
Let $\Gamma=\Psi(S^1)$ be a quasicircle, for $\Psi\in QD(\D)$. If $I^*$ is the complete hyperbolic metric in the conformal class of a connected component $\Omega$ of $\partial_\infty \Hyp^3\setminus\Gamma$, and $B_0^*$ is the traceless part of the shape operator at infinity $B^*$, then
\begin{equation} \label{relation det B0 bers norm}
\sup_{z\in\Omega}|\det B_0^*(z)|=||\Psi||^2_{\mathcal{B}}\,.
\end{equation} 
\end{lemma}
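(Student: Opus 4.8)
The plan is to compute $\det B_0^*$ directly in terms of the traceless part $\II_0^*$ of the second fundamental form at infinity, then invoke Theorem \ref{teorema real part schwartzian} to express everything via the Schwarzian $S_\Psi$, and finally unwind the definition of the Bers norm. The starting point is the relation $\II^* = I^*(B^*\cdot,\cdot)$ from the description from infinity, which says $B^*$ is the $I^*$-self-adjoint operator representing the bilinear form $\II^*$. Passing to traceless parts commutes with this identification (since $I^*$ itself corresponds to the identity operator $E$, whose traceless part is zero), so $\II_0^* = I^*(B_0^*\cdot,\cdot)$, i.e. $B_0^*$ is the $I^*$-self-adjoint traceless operator associated to $\II_0^*$.

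The key linear-algebra observation is that a traceless self-adjoint $(1,1)$-tensor $B_0^*$ on a $2$-dimensional space has eigenvalues $\pm\mu$ for some $\mu\geq 0$, hence $\det B_0^* = -\mu^2 = -\tfrac12\,\mathrm{tr}\big((B_0^*)^2\big) = -\tfrac12\,|\II_0^*|^2_{I^*}$, where $|\cdot|_{I^*}$ denotes the pointwise norm of a symmetric $2$-tensor with respect to $I^*$. So $|\det B_0^*(z)| = \tfrac12 |\II_0^*(z)|^2_{I^*}$. By Theorem \ref{teorema real part schwartzian}, $\II_0^* = -\mathrm{Re}(S_\Psi)$, where $S_\Psi = h(z)\,dz^2$ is the holomorphic quadratic differential given by the Schwarzian derivative of the uniformizing map $\Psi$. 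Writing $I^* = e^{2\eta(z)}|dz|^2$ for the Poincar\'e metric, one computes that for a holomorphic quadratic differential $q = h\,dz^2$, the $I^*$-norm of $\mathrm{Re}(q)$ (as a real symmetric $2$-tensor) equals $\sqrt{2}\,e^{-2\eta(z)}|h(z)|$ — the factor $\sqrt 2$ coming from $\mathrm{Re}(dz^2) = dx^2 - dy^2$ having $I^*$-norm $\sqrt2 \cdot e^{-2\eta}$ pointwise when normalized against $e^{2\eta}(dx^2+dy^2)$. Therefore $|\det B_0^*(z)| = \tfrac12 \cdot 2\, e^{-4\eta(z)}|h(z)|^2 = \big(e^{-2\eta(z)}|h(z)|\big)^2$.

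Taking the supremum over $z \in \Omega$ and transporting via the inversion in $S^1$ that identifies $\Omega$-quantities with $\D^*$-quantities (as in the definition of $||\bullet||_{\mathcal{Q}(\D^*)}$), the right-hand side becomes $\big(\sup_{z}e^{-2\eta(z)}|h(z)|\big)^2 = ||S_\Psi||^2_{\mathcal{Q}(\D^*)} = ||\Psi||^2_{\mathcal B}$, which is exactly \eqref{relation det B0 bers norm}. One small point to be careful about: the map $\Psi \in QD(\D)$ of the statement is conformal on $\D^*$ and maps $\D^*$ to $\Omega$ (possibly post-composed with a M\"obius map to fix normalization), and $||\Psi||_{\mathcal B}$ is defined via the Schwarzian on $\D^*$; since the Schwarzian of a M\"obius map vanishes and the Schwarzian transforms by the cocycle rule, $||\Psi||_{\mathcal B}$ is precisely the $||\bullet||_{\mathcal Q(\D^*)}$-norm of the Schwarzian of the uniformizing isometry appearing in Theorem \ref{teorema real part schwartzian}, so the two notions agree.

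The main obstacle — really the only nontrivial content beyond bookkeeping — is getting the constants right in the chain $\det B_0^* \rightsquigarrow |\II_0^*|^2_{I^*} \rightsquigarrow e^{-4\eta}|h|^2$: one must pin down the precise normalization conventions (the $\tfrac12$ in $\det = -\tfrac12\mathrm{tr}((B_0^*)^2)$ for traceless $2\times 2$ operators, and the $\sqrt2$ relating $|\mathrm{Re}(h\,dz^2)|_{I^*}$ to $e^{-2\eta}|h|$) so that the two square-roots-worth of factors cancel and produce exactly $||\Psi||^2_{\mathcal B}$ with no stray constant. This is why the lemma is phrased as following ``by straightforward computation'': the result is clean precisely because these conventions conspire, but the computation must be done carefully rather than quoted.
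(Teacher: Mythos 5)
Your proof is correct and follows essentially the same route as the paper: invoke Theorem \ref{teorema real part schwartzian} to write $\II_0^*=-\mathrm{Re}(S_\Psi)$, compute pointwise that $|\det B_0^*|=e^{-4\eta}|h|^2$, and conclude from the definition of the Bers norm. The only difference is bookkeeping — the paper does the pointwise step by writing $I^*$, $\II_0^*$, $B_0^*$ as explicit matrices in the $dz,d\bar z$ frame and taking the determinant, while you use the invariant identity $\det B_0^*=-\tfrac12\mathrm{tr}\bigl((B_0^*)^2\bigr)=-\tfrac12|\II_0^*|^2_{I^*}$; both give the same constant-free result.
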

\begin{proof}
From Theorem \ref{teorema real part schwartzian}, $B^*_0$ is the real part of the holomorphic quadratic differential $-S_\Psi$. In complex conformal coordinates, we can assume that
$$I^*=e^{2\eta}|dz|^2=\begin{pmatrix} 0 & \frac{1}{2}e^{2\eta} \\ \frac{1}{2}e^{2\eta} & 0 \end{pmatrix}$$
and $S_\Psi=h(z)dz^2$, so that
$$\II_0^*=-\frac{1}{2}(h(z)dz^2+\overline {h(z)}d\bar z^2)=-\begin{pmatrix} \frac{1}{2}h & 0 \\ 0 & \frac{1}{2}\bar h \end{pmatrix}$$
and finally
$$B_0^*=(I^*)^{-1}\II_0^*=-\begin{pmatrix} 0 & e^{-2\eta}\bar h \\ e^{-2\eta} h & 0 \end{pmatrix}.$$
Therefore $|\det B_0^*(z)|=e^{-4\eta(z)}|h(z)|^2$. Moreover, by definition of Bers embedding, $\mathcal{B}([{\Psi}])=S_\Psi$, because $\Psi$ is a holomorphic map from $\D^*$ which maps $S^1=\partial\D$ to $\Gamma$. Since 
$$||\Psi||_{\mathcal{B}}^2=\sup_{z\in\Omega}(e^{-4\eta(z)}|h(z)|^2)\,,$$
 this concludes the proof.
\end{proof}

We are finally ready to prove Proposition \ref{computation width bers norm}.
\begin{proof}[Proof of Proposition \ref{computation width bers norm}]
Suppose again $I^*$ is a hyperbolic metric in the conformal class of $\Omega$. Since $tr(B^*)=1$ by Lemma \ref{gauss codazzi infinity}, we can write $B^*=B^*_0+(1/2)E$, where $B^*_0$ is the traceless part of $B^*$. 
The symmetric operator $B^*$ is diagonalizable; therefore we can suppose its eigenvalues at every point are $(a+1/2)$ and $(-a+1/2)$, where $a$ is a positive number depending on the point. Hence $\pm a$ are the eigenvalues of the traceless part $B_0^*$.

By using Equation \eqref{relation det B0 bers norm} of Lemma \ref{determinant and bers norm}, and observing that $|\det B_0^*|=a^2$, one obtains $||\Psi||_{\mathcal{B}}=||a||_\infty$. Since this quantity is less than $A<1/2$ by hypothesis, at every point $a<1/2$, and therefore the eigenvalues of $B^*$ are positive at every point.

By Theorem \ref{fundamental theorem infinity} there exists a smooth foliation $\mathcal{F}$ of $\Hyp^3$ by equidistant surfaces $S_\rho$, whose first fundamental form and shape operator are as in equations (\ref{first fundamental form formula}) and (\ref{shape operator formula}) above. We are going to compute  
$$\rho_1=\inf\left\{\rho:B_\rho\text{ is non-singular and negative definite}\right\}$$ and $$\rho_2=\sup\left\{\rho:B_\rho\text{ is non-singular and positive definite}\right\}.$$ Hence $S_{\rho_1}$ is concave and $S_{\rho_2}$ is convex. By Corollary \ref{minimal surface contained convex hull}, $S$ is contained in the region bounded by $S_{\rho_1}$ and $S_{\rho_2}$. We are therefore going to compute $\rho_1-\rho_2$. From the expression (\ref{shape operator formula}), the eigenvalues of $B_\rho$ are
$$\lambda_\rho=\frac{-2e^{2\rho}+(2a+1)}{2e^{2\rho}+(2a+1)}$$
and 
$$\lambda'_\rho=\frac{-2e^{2\rho}+(1-2a)}{2e^{2\rho}+(1-2a)}.$$
Since $a<1/2$, the denominators of $\lambda_\rho$ and $\lambda'_\rho$ are always positive; one has $\lambda_\rho<0$ if and only if $e^{2\rho}>a+1/2$, whereas $\lambda'_\rho<0$ if and only if $e^{2\rho}>-a+1/2$.  Therefore $$\rho_1-\rho_2=\frac{1}{2}\left(\log\left(A+\frac{1}{2}\right)-\log\left(-A+\frac{1}{2}\right)\right)=\frac{1}{2}\log\left(\frac{1+2A}{1-2A}\right)=\arctanh(2A)\,.$$
This shows that every point $x$ on $S$ lies on a geodesic orthogonal to the leaves of the foliation, and the distance between the concave surface $S_{\rho_1}$ and the convex surface $S_{\rho_2}$, on the two sides of $x$, is less than $\arctanh(2A)$. Taking $P_-$ and $P_+$ the planes tangent to $S_{\rho_1}$ and $S_{\rho_2}$, the claim is proved.
\end{proof}

\begin{remark}
The proof relies on the observation - given in \cite{schkraRenVol} and expressed here implicitly in Theorem \ref{fundamental theorem infinity} - that if the shape operator at infinity $B^*$ is positive definite, then one reconstructs the shape operator $B_{\rho}$ as in Equation \eqref{shape operator formula}, and for $\rho=0$ the principal curvatures are in $(-1,1)$. Hence from our argument it follows that, if the Bers norm $||\Psi||_{\mathcal{B}}$ is less than $1/2$, then one finds a surface $S$ with $\partial_\infty S=\Psi(S^1)$, with principal curvatures in $(-1,1)$. This is a special case of the results in \cite{epsteingauss}, where the existence of such surface is used to prove (using techniques of hyperbolic geometry) a generalization of the univalence criterion of Nehari.
\end{remark}



\subsection{Boundedness of curvature} \label{subsec boundedness}

Recall that the curvature of a minimal surface $S$ is given by $K_{S}=-1-\lambda^2$, where $\pm\lambda$ are the principal curvatures of $S$. 
We will need to show that the curvature of a complete minimal surface $S$ is also bounded below in a uniform way, depending only on the complexity of $\partial_\infty S$. This is the content of Lemma \ref{curvature bounded below}.

We will use a conformal identification of $S$ with $\D$. Under this identification the metric takes the form $g_S=e^{2f}|dz|^2$, $|dz|^2$ being the Euclidean metric on $\D$. The following uniform bounds on $f$ are known (see \cite{ahlfors}).
\begin{lemma} \label{bounded conformal factor}
Let $g=e^{2f}|dz|^2$ be a conformal metric on $\D$. Suppose the curvature of $g$ is bounded above, $K_g<-\epsilon^2<0$. Then
\begin{equation} \label{comparison conformal factors}
e^{2f}<\frac{4}{\epsilon^2(1-|z|^2)^2}\,.
\end{equation}
Analogously, if $-\delta^2<K_g$, then
\begin{equation} \label{comparison conformal factors below}
e^{2f}>\frac{4}{\delta^2(1-|z|^2)^2}\,.
\end{equation}
\end{lemma}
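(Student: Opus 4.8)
The statement to prove is Lemma \ref{bounded conformal factor}: the Ahlfors–Schwarz type comparison, giving two-sided bounds on the conformal factor of a metric on $\D$ whose curvature is pinched.

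\textbf{Plan of proof.} The statement is a classical consequence of the Ahlfors–Schwarz lemma together with the maximum principle, so the plan is to reduce both inequalities to a comparison between $g=e^{2f}|dz|^2$ and a suitable multiple of the Poincar\'e metric $g_0=4(1-|z|^2)^{-2}|dz|^2$, which has constant curvature $-1$. First I would recall that for a conformal metric $e^{2w}|dz|^2$ one has the formula $K=-e^{-2w}\Delta w$, where $\Delta=4\partial_z\partial_{\bar z}$ is the flat Laplacian. Thus writing $g=e^{2f}|dz|^2$ and the scaled Poincar\'e metric $g_\epsilon=\epsilon^{-2}g_0=e^{2h}|dz|^2$ with $e^{2h}=4\epsilon^{-2}(1-|z|^2)^{-2}$, we have $\Delta f=-K_g\,e^{2f}>\epsilon^2 e^{2f}$ (using $K_g<-\epsilon^2$) and $\Delta h=-K_{g_\epsilon}\,e^{2h}=\epsilon^2 e^{2h}$ (since $g_\epsilon$ still has curvature $-\epsilon^2$).

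\textbf{Key steps.} The heart of the argument is to show $f\le h$ on $\D$, which is exactly \eqref{comparison conformal factors}. Set $\varphi=f-h$. One computes $\Delta\varphi=\Delta f-\Delta h>\epsilon^2(e^{2f}-e^{2h})=\epsilon^2 e^{2h}(e^{2\varphi}-1)$. If $\varphi$ attained a positive interior maximum at some $z_0$, then at $z_0$ we would have $\Delta\varphi\le 0$ while the right-hand side $\epsilon^2 e^{2h(z_0)}(e^{2\varphi(z_0)}-1)>0$, a contradiction; so $\varphi$ has no positive interior maximum. The point making this genuinely a \emph{boundary} argument is that $h\to+\infty$ as $|z|\to 1$ while $f$, being a smooth metric on $\D$, need not blow up; more carefully, one applies the maximum principle on the subdisc $\{|z|\le r\}$ to the function $f-h_r$ where $h_r$ is the conformal factor of the complete hyperbolic metric of curvature $-\epsilon^2$ on the disc of radius $r$, notes $h_r\to+\infty$ on $|z|=r$ so $f-h_r$ is negative near that circle, concludes $f\le h_r$ on $|z|<r$ by the above maximum-principle argument, and then lets $r\to 1$, using $h_r\to h$ pointwise. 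This yields $e^{2f}\le 4\epsilon^{-2}(1-|z|^2)^{-2}$; the strict inequality follows since equality at one point would force $\varphi\equiv 0$ by the strong maximum principle, contradicting $K_g<-\epsilon^2=K_{g_\epsilon}$. The lower bound \eqref{comparison conformal factors below} is the mirror image: now $-\delta^2<K_g$ gives $\Delta f=-K_g e^{2f}<\delta^2 e^{2f}$, and comparing with the Poincar\'e metric of curvature $-\delta^2$ one shows $f\ge h$ by looking for a negative interior minimum of $f-h$; here no exhaustion is needed since at a minimum $\Delta(f-h)\ge 0$ while the curvature inequality forces it negative, and one only has to rule out the minimum escaping to the boundary, which is automatic because $h\to+\infty$ there while $f$ stays bounded near any interior compact set — one instead runs the minimum-principle argument directly on all of $\D$ after observing $f-h$ tends to $-\infty$ at the boundary, so any infimum is attained in the interior.

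\textbf{Main obstacle.} The only subtle point is the boundary behavior: one must be careful that the maximum/minimum principle is being applied to a function with the right sign near $\partial\D$, which is why I would use the exhaustion by hyperbolic metrics of curvature $-\epsilon^2$ on shrinking subdiscs rather than trying to compare with the complete metric directly on all of $\D$ at once. Everything else — the curvature formula, the sign chasing at a critical point, the strong maximum principle for the strict inequality — is routine. Since the paper attributes this to \cite{ahlfors}, I would in fact keep the proof brief, stating that it follows from the Ahlfors–Schwarz lemma applied to the identity map between $(\D,g)$ and $(\D,g_\epsilon)$ (respectively $(\D,g_\delta)$), and only spell out the maximum-principle comparison if a self-contained argument is wanted.
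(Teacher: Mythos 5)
The paper gives no proof of this lemma at all (it simply cites Ahlfors), so there is nothing to compare line by line; judged on its own, your argument for the upper bound \eqref{comparison conformal factors} is correct and standard: compare $f$ with the conformal factor $h_r$ of the complete metric of curvature $-\epsilon^2$ on $\{|z|<r\}$, use that $h_r\to+\infty$ on $|z|=r$ to rule out a positive maximum of $f-h_r$ escaping to the boundary, and let $r\to 1$. That is exactly the Ahlfors--Schwarz argument the citation refers to.

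The lower bound \eqref{comparison conformal factors below} is where you have a genuine gap. Your boundary analysis is inverted: you argue that the infimum of $\varphi=f-h$ cannot escape to $\partial\D$ ``because $h\to+\infty$ there while $f$ stays bounded'', and then that ``$f-h$ tends to $-\infty$ at the boundary, so any infimum is attained in the interior''. If $f-h\to-\infty$ at the boundary, the infimum is $-\infty$ and is precisely \emph{not} attained at an interior point, so the interior minimum-principle computation never gets off the ground. In fact the inequality is false for a general conformal metric on $\D$: the flat metric $g=|dz|^2$ has $K_g=0>-\delta^2$ but certainly does not dominate $4\delta^{-2}(1-|z|^2)^{-2}|dz|^2$ near $|z|=1$ (nor even at $0$ if $\delta\le 2$). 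The missing hypothesis is completeness of $g$, which holds in the paper's application since $g$ is the induced metric on a complete area-minimizing disc, but which your argument never uses. With completeness the statement is true, but the proof is not a naive interior minimum principle: completeness does not force $e^{2f}$ to blow up pointwise at $\partial\D$, so one needs either the Schwarz lemma of Yau for the identity map $(\D,g)\to(\D,\,4\delta^{-2}(1-|z|^2)^{-2}|dz|^2)$, or equivalently the Omori--Yau generalized maximum principle applied on the complete surface $(\D,g)$ (whose curvature is bounded below by $-\delta^2$, so that principle is available), to handle the possibility that the supremum of $h-f$ is not attained. You should state the completeness hypothesis explicitly and replace the second half of your argument by one of these completeness-based comparisons.
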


\begin{remark} \label{remark compare conformal and geodesic balls}
A consequence of Lemma \ref{bounded conformal factor} is that, for a conformal metric $g=e^{2f}|dz|^2$ on $\D$, if the curvature of $g$ is bounded from above by $K_g<-\epsilon^2<0$, then a conformal ball $B_0(p,R)$ (i.e. a ball of radius $R$ for the Euclidean metric $|dz|^2$) is contained in the geodesic ball of radius $R'$ (for the metric $g$) centered at the same point, where $R'$ only depends from $R$. This can be checked by a simple integration argument, and $R'$ is actually obtained by multiplying $R$ for the square root of the constant in the RHS of Equation \eqref{comparison conformal factors}. Analogously, a lower bound on the curvature, of the form $-\delta^2<K_g$, ensures that the geodesic ball of radius $R$ centered at $p$ is contained in the conformal ball $B_0(p,R')$, where $R'$ depends on $R$ and $\delta$.
\end{remark}

\begin{lemma} \label{curvature bounded below}
For every $K_0>1$, there exists a constant $\Lambda_0>0$ such that all minimal surfaces $S$ with $\partial_\infty S$ a $K$-quasicircle, $K\leq K_0$, have principal curvatures bounded by $||\lambda||_\infty<\Lambda_0$.
\end{lemma}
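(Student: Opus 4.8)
The plan is to argue by contradiction using a blow-up/compactness argument for conformal harmonic maps, in the spirit of \cite{cuschieri}. Suppose no uniform bound $\Lambda_0$ exists. Then there is a sequence of minimal embedded discs $S_n\subset\Hyp^3$, each with $\partial_\infty S_n=\Gamma_n$ a $K_n$-quasicircle, $K_n\leq K_0$, and points $x_n\in S_n$ with $\lambda(x_n)\to\infty$. I would first normalize: acting by an isometry of $\Hyp^3$, one may assume all $x_n$ equal a fixed basepoint $x_0$ and that the tangent plane $T_{x_0}S_n$ is fixed. Recall that minimal discs in $\Hyp^3$ are exactly images of conformal harmonic maps $\sigma_n:\D\to\Hyp^3$; after precomposing with a M\"obius transformation of $\D$ we may assume $\sigma_n(0)=x_0$. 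The quasicircles $\Gamma_n=\partial_\infty S_n$ are images of $K_0$-quasiconformal maps $\Psi_n$; normalizing the three fixed points (using that $\Gamma_n$ passes near a fixed configuration after the isometry normalization) and invoking the compactness Theorem \ref{Compactness property of quasicircles}, a subsequence of the $\Psi_n$ converges uniformly to a $K_0$-quasiconformal map $\Psi_\infty$, so $\Gamma_n\to\Gamma_\infty$, again a $K_0$-quasicircle.

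Next I would pass to a limit of the surfaces themselves. The key input is Proposition \ref{computation width bers norm} together with Corollary \ref{minimal surface contained convex hull}: each $S_n$ lies in $\mathcal{CH}(\Gamma_n)$, and since $\Gamma_n$ is a $K$-quasicircle with $K\leq K_0$, these convex hulls stay in a fixed compact region of $\Hyp^3$ near $x_0$ (their width is controlled). In particular the $\sigma_n$ are uniformly bounded on compact subsets of $\D$. One can then quote the interior regularity/compactness theory for conformal harmonic maps into $\Hyp^3$ with uniformly bounded image (as in \cite{cuschieri}): the conformal factors $e^{2f_n}$ and all derivatives of $\sigma_n$ are uniformly bounded on compact subsets of $\D$, so a subsequence converges in $C^\infty_{\mathrm{loc}}(\D)$ to a conformal harmonic map $\sigma_\infty:\D\to\Hyp^3$, whose image $S_\infty$ is a minimal surface with $\partial_\infty S_\infty=\Gamma_\infty$. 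The point where one needs care is the a priori bound on the conformal factor $f_n$ at the basepoint: one needs that $f_n(0)$ does not diverge (otherwise the parametrization degenerates). This is exactly where the confinement of $S_n$ inside the thin region between $P_-$ and $P_+$ of Proposition \ref{computation width bers norm} is used, preventing both collapse and the conformal harmonic parametrization from spreading out; Lemma \ref{bounded conformal factor} gives the complementary upper bound on $e^{2f_n}$ from $K_{S_n}=-1-\lambda_n^2<-1$.

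Finally, I would derive the contradiction. On $S_\infty$, the principal curvature function $\lambda_\infty$ is finite at every point (it is a smooth minimal surface), so in particular $\lambda_\infty(x_0)<\infty$; but principal curvatures are computed from the second fundamental form, which depends on up to second derivatives of $\sigma_n$ and on $e^{2f_n}$ — all of which converge — so $\lambda(x_n)=\lambda_n(0)\to\lambda_\infty(0)<\infty$, contradicting $\lambda(x_n)\to\infty$. I expect the main obstacle to be setting up the compactness for the conformal harmonic maps cleanly: one must simultaneously control the image (via the convex hull confinement), rule out degeneration of the conformal parametrization at the basepoint, and match up the normalizations of $\sigma_n$ (a M\"obius normalization of $\D$) with the normalization of $\Gamma_n$ (the three-point normalization for quasiconformal compactness), so that both limits exist on the same subsequence and are compatible. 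Everything else is a routine consequence of standard elliptic theory and the results already established in the paper.
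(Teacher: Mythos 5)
Your overall strategy --- arguing by contradiction, renormalizing by isometries so the bad points become a fixed basepoint $x_0$, using the three-point normalization plus Theorem \ref{Compactness property of quasicircles} to get Hausdorff convergence of the quasicircles, and then passing to a $C^\infty_{\mathrm{loc}}$ limit of the conformal harmonic parametrizations to contradict the curvature blow-up at $x_0$ --- is exactly the paper's proof, which carries out the last step by quoting Lemma \ref{compactness harmonic maps}.

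The one step that does not work as you wrote it is the confinement argument. You invoke Proposition \ref{computation width bers norm} together with Corollary \ref{minimal surface contained convex hull} to say that the convex hulls $\mathcal{CH}(\Gamma_n)$ ``stay in a fixed compact region of $\Hyp^3$ near $x_0$'', and you make this responsible both for the uniform boundedness of the $\sigma_n$ on compact subsets of $\D$ and for the non-degeneracy of $f_n(0)$. First, Proposition \ref{computation width bers norm} requires $||\Psi||_{\mathcal B}<1/2$, and for an arbitrary $K_0>1$ (which is what the lemma must handle) the Bers norm of a $K$-quasicircle with $K\leq K_0$ can well exceed $1/2$, so that proposition is simply unavailable here. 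Second, the convex hull of a quasicircle is never compact --- it has $\Gamma_n$ as asymptotic boundary --- so ``uniformly bounded image'' is false; what is true, and all that is needed, is that the image of each fixed compact subset of the parameter disc is bounded, and this follows (as in the proof of Lemma \ref{compactness harmonic maps}) from the normalization $\sigma_n(0)=x_0$ together with the Ahlfors upper bound of Lemma \ref{bounded conformal factor}, valid for every minimal surface since $K_{S_n}=-1-\lambda_n^2\leq -1$: it gives $\sigma_n(B_0(0,2R))\subset B_{\Hyp^3}(x_0,R')$ with $R'$ depending only on $R$. Likewise, the thin-slab confinement between $P_-$ and $P_+$ would not by itself yield the lower bound on $e^{2f_n(0)}$ you are worried about; the paper does not argue that way, but simply extracts $C^\infty$ convergence on compact sets from Lemma \ref{compactness harmonic maps} and reads off convergence of the curvature at $x_0$. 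If you replace your confinement step by a direct appeal to Lemma \ref{compactness harmonic maps} --- whose hypotheses, namely $\sigma_n(0)=x_0$ and Hausdorff convergence of $\Gamma_n$, you have already arranged --- your argument coincides with the one in the paper.
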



We will prove Lemma \ref{curvature bounded below} by giving a compactness argument. It is known that a conformal embedding $\sigma:\D\rar \Hyp^3$ is harmonic if and only if $\sigma(\D)$ is a minimal surface, see \cite{eells}. The following Lemma is proved in \cite{cuschieri} in the more general case of CMC surfaces. We give a sketch of the proof here for convenience of the reader.
\begin{lemma} \label{compactness harmonic maps}
Let $\sigma_n:\D\rar\Hyp^3$ a sequence of conformal harmonic maps such that $\sigma(0)=x_0$ and $\partial_\infty(\sigma_n(\D))=\Gamma_n$ is a Jordan curve, and assume $\Gamma_n\rar \Gamma$ in the Hausdoff topology. Then there exists a subsequence $\sigma_{n_k}$ which converges $C^\infty$ on compact subsets to a conformal harmonic map $\sigma_\infty:\D\to\Hyp^3$ with $\partial_\infty(\sigma_\infty(\D))=\Gamma$.
\end{lemma}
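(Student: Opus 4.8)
The plan is to prove Lemma~\ref{compactness harmonic maps} by a standard elliptic-compactness argument, where the key technical input is the \emph{a priori} curvature bound from Lemma~\ref{curvature bounded below}, which itself is established via the very compactness statement we are proving, run by contradiction. So the logical order is: first deduce interior estimates for a single harmonic map in terms of the geometry it inherits from being a minimal surface, then package these into a normal-families argument, then close the loop on the curvature bound.

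\textbf{Step 1: Interior a priori estimates.} Fix a conformal harmonic map $\sigma:\D\to\Hyp^3$ parametrizing a minimal surface $S$, and write the induced metric as $g_S=e^{2f}|dz|^2$. Harmonicity means $\sigma$ satisfies a semilinear elliptic system $\Delta_0\sigma + (\text{first-order terms involving } \partial\sigma)=0$ coming from the tension-field equation with the Christoffel symbols of $\Hyp^3$ pulled back. The minimal-surface hypothesis gives the curvature identity $K_S=-1-\lambda^2$; combined with an \emph{upper} bound $K_S\le -1$ (which is automatic) Lemma~\ref{bounded conformal factor} yields the upper bound $e^{2f}<4/(1-|z|^2)^2$ on $f$ on all of $\D$, hence a uniform $C^0$ bound on $f$ on any fixed compact $\D'\subset\subset\D$. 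Once we also have a \emph{lower} bound on the curvature, i.e.\ a bound $||\lambda||_\infty<\Lambda_0$, Lemma~\ref{bounded conformal factor} (second part) gives a matching lower bound on $e^{2f}$ on $\D'$, so $f$ is uniformly bounded above and below on compacts. This controls $|\partial\sigma|$ in $L^\infty_{loc}$, and then elliptic bootstrapping (Schauder estimates applied iteratively to the harmonic map system, as in \cite{cuschieri}) upgrades this to uniform $C^{k}$ bounds on $\sigma$ on every $\D''\subset\subset\D'$, for all $k$.

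\textbf{Step 2: Normal families.} Given the sequence $\sigma_n$ with $\sigma_n(0)=x_0$ and $\Gamma_n\to\Gamma$, each $\sigma_n(\D)$ is a complete minimal surface with bounded principal curvatures, so Step~1 applies uniformly (the constants depend only on $\Lambda_0$, hence only on the $K$-quasicircle bound, which is uniform once one knows $\Gamma_n$ has uniformly bounded quasiconformal constant — this follows from Hausdorff convergence to a fixed Jordan curve together with Theorem~\ref{Compactness property of quasicircles}, or is simply assumed). Thus $\{\sigma_n\}$ is precompact in $C^\infty_{loc}(\D,\Hyp^3)$; extract a subsequence $\sigma_{n_k}\to\sigma_\infty$ in $C^\infty$ on compacts. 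The limit $\sigma_\infty$ is again conformal and harmonic (both conditions pass to the limit under $C^2_{loc}$ convergence), hence $\sigma_\infty(\D)$ is a minimal surface, and $\sigma_\infty(0)=x_0$. The remaining point is to identify $\partial_\infty(\sigma_\infty(\D))$ with $\Gamma$: one shows $\sigma_\infty(\D)\subset\mathcal{CH}(\Gamma)$ using Corollary~\ref{minimal surface contained convex hull} applied to the $\sigma_n$ (each $\sigma_n(\D)\subset\mathcal{CH}(\Gamma_n)$, and convex hulls converge under Hausdorff convergence of the curves) so the asymptotic boundary is contained in $\Gamma$; conversely a barrier/maximum-principle argument forces the asymptotic boundary to be all of $\Gamma$ (it is a closed subset of a Jordan curve that must separate the two complementary domains in which nearby leaves live). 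This is the standard argument for asymptotic Plateau limits.

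\textbf{Step 3: Closing the loop for Lemma~\ref{curvature bounded below}.} Suppose no uniform $\Lambda_0$ exists: there is a sequence of minimal surfaces $S_n$ with $\partial_\infty S_n$ a $K_n$-quasicircle, $K_n\le K_0$, and $||\lambda_n||_\infty\to\infty$. Pick points $x_n\in S_n$ where $\lambda_n$ is nearly maximal, and apply isometries of $\Hyp^3$ to arrange $x_n=x_0$ fixed; reparametrize conformally so that $\sigma_n(0)=x_0$. By Theorem~\ref{Compactness property of quasicircles} the normalized quasicircles $\Gamma_n$ subconverge in the Hausdorff topology to a quasicircle $\Gamma$ (possibly after a further Möbius normalization fixing three points with mutually bounded spherical distances). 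Here the subtlety is that a priori we only have the \emph{upper} curvature bound, so in Step~1 we only know $f$ is bounded above on compacts, which is enough to get $|\partial\sigma_n|$ bounded above, hence uniform $C^\infty_{loc}$ bounds by bootstrapping the harmonic map equation (the equation only needs an upper gradient bound, not a lower one, since $\Hyp^3$ has bounded geometry). Thus $\sigma_n\to\sigma_\infty$ in $C^\infty_{loc}$ and in particular the shape operators converge, so $\lambda_n(0)\to\lambda_\infty(0)<\infty$ — contradicting $\lambda_n(0)\to\infty$. This gives $\Lambda_0=\Lambda_0(K_0)$.

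\textbf{Main obstacle.} The delicate point is the interplay between the two halves of the argument: the clean statement of Lemma~\ref{compactness harmonic maps} (with a limit whose asymptotic boundary is exactly $\Gamma$) seems to need the lower curvature bound of Lemma~\ref{curvature bounded below} to control the conformal factor from below and keep compact sets in the conformal parametrization from degenerating, yet Lemma~\ref{curvature bounded below} is proved \emph{using} a compactness argument of exactly this type. The resolution is that the contradiction argument in Step~3 only requires the \emph{weaker} compactness statement that needs just the upper curvature bound (always available), from which one extracts a smooth limit and derives a contradiction with the blow-up of curvature — and only afterwards does one have the full Lemma~\ref{compactness harmonic maps} with both bounds in force. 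Getting this dependency order right, and in particular checking that the harmonic-map bootstrap genuinely needs only an \emph{upper} bound on $|\partial\sigma|$ (which is where the homogeneity of $\Hyp^3$ and the structure of the tension equation are used), is the crux; the asymptotic-boundary identification via convex hulls and maximum principle is more routine but must be done carefully since the $\sigma_n$ are only controlled on compact subsets of $\D$.
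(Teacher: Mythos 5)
Your overall strategy (Schauder bootstrap of the harmonic map system, Arzel\`a--Ascoli with a diagonal argument, convex hulls for one inclusion of the asymptotic boundary and a barrier for the other) is the same as the paper's, but the way you organize the hypotheses introduces a genuine flaw. The lemma is stated for arbitrary Jordan curves $\Gamma_n\to\Gamma$: there is no quasicircle hypothesis, and your claim that a uniform quasiconformal constant for the $\Gamma_n$ ``follows from Hausdorff convergence to a fixed Jordan curve together with Theorem \ref{Compactness property of quasicircles}'' is false --- that theorem goes in the opposite direction (bounded dilatation plus normalization gives a Hausdorff-convergent subsequence), and Hausdorff convergence of Jordan curves gives no quasiconformal control whatsoever (the limit need not even be a quasicircle). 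Consequently your Steps 1--2, which invoke $\Lambda_0$ from Lemma \ref{curvature bounded below} (and hence a lower bound on $e^{2f}$), rest on hypotheses that are neither stated nor derivable, and, as you yourself note, would be circular since Lemma \ref{curvature bounded below} is proved from the present lemma.

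Your ``Main obstacle'' paragraph contains the right observation --- the bootstrap needs only an upper bound on $\|d\sigma_n\|$ --- but you mischaracterize what it yields: you present it as a ``weaker'' compactness statement and assert that the lemma as stated ``seems to need the lower curvature bound.'' It does not. Since $K_{S}=-1-\lambda^2\leq -1$ for any minimal surface, Lemma \ref{bounded conformal factor} (with $\epsilon=1$) gives the upper bound on $e^{2f_n}$ unconditionally; together with the normalization $\sigma_n(0)=x_0$ this confines $\sigma_n(B_0(0,2R))$ to a fixed compact subset of $\Hyp^3$ (bounding the Christoffel symbols in the Poincar\'e model) and bounds $\|d\sigma_n\|^2=2e^{2f_n}$, which is all the Schauder iteration requires. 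Since the conclusion is only $C^\infty$ convergence on compact subsets of $\D$, no lower bound on the conformal factor (hence no non-degeneracy of the parametrization) is needed; and the identification $\partial_\infty(\sigma_\infty(\D))=\Gamma$ uses only Corollary \ref{minimal surface contained convex hull} plus Hausdorff convergence for the inclusion $\subseteq$, and a separating totally geodesic plane (a half-space meeting $\Gamma$ but not $\sigma_\infty(\D)$, which must meet $\sigma_n(\D)$ for large $n$) for $\supseteq$. So the full lemma, not a weakened version, is proved from the upper curvature bound alone; the lower bound and Remark \ref{remark compare conformal and geodesic balls} only enter later (Lemma \ref{curvature bounded below} and Proposition \ref{schauder estimate hyp}), and there is no weak/strong dichotomy to manage. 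You should delete the appeal to $\Lambda_0$ and to uniform quasiconformality from Steps 1--2 and run the argument you sketch in the last paragraph as the main (and only) line of proof.
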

\begin{proof}[Sketch of proof]
Consider the coordinates on $\Hyp^3$ given by the Poincar\'e model, namely $\Hyp^3$ is the unit ball in $\R^3$. 
Let $\sigma_n^l$, for $l=1,2,3$, be the components of $\sigma_n$ in such coordinates. Fix $R>0$ for the moment.

Since the curvature of the minimal surfaces $\sigma_n(\D)$ is less than $-1$, from Lemma \ref{bounded conformal factor} (setting $\epsilon=1$) and Remark \ref{remark compare conformal and geodesic balls}, for every $n$ we have that $\sigma_n(B_0(0,2R))$ is contained in a geodesic ball for the induced metric of fixed radius $R'$ centered at $x_0$. In turn, the geodesic ball for the induced metric is clearly contained in the ball $B_{\Hyp^3}(x_0,R')$, for the hyperbolic metric of $\Hyp^3$. We remark that the radius $R'$ only depends on $R$.

We will apply standard Schauder theory (compare also similar applications in Sections \ref{subsection schauder}) to the harmonicity condition
\begin{equation} \label{harmonicity condition}
\Delta_0 \sigma_n^l=-\left(\Gamma_{jk}^l\circ\sigma\right)\left(\frac{\partial \sigma_i^j}{\partial x^1}\frac{\partial \sigma_i^k}{\partial x^1}+\frac{\partial \sigma_i^j}{\partial x^2}\frac{\partial \sigma_i^k}{\partial x^2}\right)=:h_n^l
\end{equation}
for the Euclidean Laplace operator $\Delta_0$, where $\Gamma_{jk}^l$ are the Christoffel symbols of the hyperbolic metric in the Poincar\'e model. 

The RHS in Equation (\ref{harmonicity condition}), which is denoted by $h_n^l$, is uniformly bounded on $B_0(0,2R)$. Indeed Christoffel symbols are uniformly bounded, since $\sigma_n(B_0(0,2R))$ is contained in a compact subset of $\Hyp^3$, as already remarked. The partial derivatives of $\sigma_n^l$ are bounded too, since one can observe that, if the induced metric on $S$ is $e^{2f}|dz|^2$, then $2e^{2f}=||d\sigma||^2$, where
$$||d\sigma||^2=\frac{4}{(1-\Sigma_i (\sigma_n^i)^2)^2}\left(\left( \frac{\partial \sigma_n^1}{\partial x}\right)^2\!\!+\!\left( \frac{\partial \sigma_n^2}{\partial x}\right)^2\!\!+\!\left( \frac{\partial \sigma_n^3}{\partial x}\right)^2\!\!+\!\left( \frac{\partial \sigma_n^1}{\partial y}\right)^2\!\!+\!\left( \frac{\partial \sigma_n^2}{\partial y}\right)^2\!\!+\!\left( \frac{\partial \sigma_n^3}{\partial y}\right)^2 \right)\,.$$
Hence from Lemma \ref{bounded conformal factor} and again the fact that $\sigma_n(B_0(0,2R))$ is contained in a compact subset of $\Hyp^3$, all partial derivatives of $\sigma_n$ are uniformly bounded.

The Schauder estimate for the equation $\Delta_0 \sigma_n^l=h_n^l$ (\cite{giltrud}) give (for every $\alpha\in(0,1)$) a constant $C_1$ such that:
$$|| \sigma_n^l||_{C^{1,\alpha}(B_0(0,R_1))}\leq C_1\left( || \sigma_n^l ||_{C^{0}(B_0(0,2R))}+||h_n^l||_{C^{0}(B_0(0,2R))}  \right)\,.$$
Hence one obtains uniform $C^{1,\alpha}(B_0(0,R_1))$ bounds on $\sigma_n^l$, where $R<R_1<2R$, and this provides $C^{0,\alpha}(B_0(0,R_1))$ bounds on $h_n^l$. Then the following estimate of Schauder-type
$$|| \sigma_n^l||_{C^{2,\alpha}(B_0(0,R_2))}\leq C_2\left( || \sigma_n^l ||_{C^{0}(B_0(0,R_1))}+||h_n^l||_{C^{1,\alpha}(B_0(0,R_1))}  \right)$$
provide $C^{2,\alpha}$ bounds on $B_0(0,R_2)$, for $R<R_2<R_1$. By a boot-strap argument which repeats this construction, uniform $C^{k,\alpha}(B_0(0,R))$ for $\sigma_n^l$ are obtained for every $k$.

 By Ascoli-Arzel\`a theorem, one can extract a subsequence of $\sigma_n$ converging uniformly in $C^{k,\alpha}(B_0(0,R))$ for every $k$. By applying a diagonal procedure one can find a subsequence converging $C^\infty$. One concludes the proof by a diagonal process again on a sequence of compact subsets $B_0(0,R_n)$ which exhausts $\D$.
 
The limit function $\sigma_\infty:\D\to\Hyp^3$ is conformal and harmonic, and thus gives a parametrization of a minimal surface. It remains to show that $\partial_\infty(\sigma_\infty(\D))=\Gamma$. Since each $\sigma_n(\D)$ is contained in the convex hull of $\Gamma_n$, the Hausdorff convergence on the boundary at infinity ensures that $\sigma_\infty(\D)$ is contained in the convex hull of $\Gamma$, and thus $\partial_\infty(\sigma_\infty(\D))\subseteq \Gamma$.

For the other inclusion, assume there exists a point $p\in\Gamma$ which is not in the boundary at infinity of $\sigma_\infty(\D)$. Then there is a neighborhood of $p$ which does not intersect $\sigma_\infty(\D)$, and one can find a totally geodesic plane $P$ such that a half-space bounded by $P$ intersects $\Gamma$ (in $p$, for instance), but does not intersect $\sigma_\infty(\D)$. But such half-space intersects $\sigma_n (\D)$ for large $n$ and this gives a contradiction.
\end{proof}

\begin{proof}[Proof of Lemma \ref{curvature bounded below}]
We argue by contradiction. Suppose there exists a sequence of minimal surfaces $S_n$ bounded by $K$-quasicircles $\Gamma_n$, with $K\leq K_0$, with curvature in a point $K_{S_n}(x_n)\leq -n$. Let us consider isometries $T_n$ of $\Hyp^3$, so that $T_n(x_n)=x_0$. 




We claim that, since the point $x_0$ is contained in the convex hull of $T_n(\Gamma_n)$ for every $n$, the quasicircles $T_n(\Gamma_n)$ can be assumed to be the image of $S^1$ under $K_0$-quasiconformal maps $\Psi_n:\widehat\C\to\widehat \C$, such that $\Psi_n$ maps three points of $S^1$ (say $1$, $i$ and $-1$) to points of $T_n(\Gamma_n)$  at uniformly positive distance from one another in the spherical metric (thus satisfying the hypothesis of Theorem \ref{Compactness property of quasicircles}). Indeed, recall that composing a $K_0$-quasiconformal map by a conformal map does not change the constant $K_0$. Thus it suffices to prove that the quasicircles $T_n(\Gamma_n)=\Psi_n(S^1)$ ($\Psi_n$ a $K_0$-quasiconformal map) contain three points $u_n,v_n,w_n$ at uniformly positive distance from one another, and then one can re-parameterize the quasicircle by pre-composing $\Psi_n$ with a biholomorphism of $\widehat\C$ (which is determined by the image of three points on $S^1$) so that $1,i,-1$ are mapped to $u_n,v_n,w_n$. Moreover, it suffices to prove that the quasicircles $T_n(\Gamma_n)$ contain two points $u_n,v_n$ with distance $d_{S^2}(u_n,v_n)>2C$, where $C$ is some constant independent from $n$. Indeed, the Jordan curve $T_n(\Gamma_n)$ will then necessarily contain a third point $w_n$ such that $d_{S^2}(u_n,w_n)$ and $d_{S^2}(v_n,w_n)$ are larger than $C$. The latter claim is easily proved by contradiction: if the statement was not true, then for every integer $j$ there would exists a quasicircle $T_{n_j}(\Gamma_{n_j})$ which is contained in a ball of radius $1/j$ for the spherical metric on $S^2$. But then it is clear that, for large $j$, the convex hull of $T_{n_j}(\Gamma_{n_j})$ would not contain the fixed point $x_0$. See Figure \ref{contradiction}.

\begin{figure}[htbp]
\centering
\includegraphics[height=6.5cm]{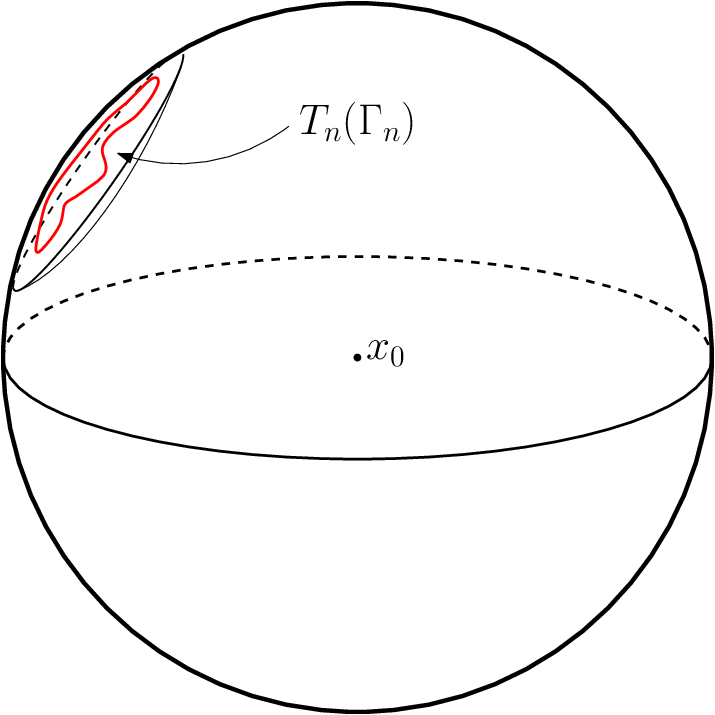}
\caption{If the quasicircle $T_n(\Gamma_n)$ is contained in a small ball for the spherical metric, then the (fixed) point $x_0$ cannot be in the convex hull of the quasicircle.} \label{contradiction}
\end{figure}

By the compactness property in Theorem \ref{Compactness property of quasicircles}, there exists a subsequence $T_{n_k}(\Gamma_{n_k})$ converging to a $K$-quasicircle $\Gamma_\infty$, with $K\leq K_0$. By Lemma \ref{compactness harmonic maps}, the minimal surfaces $T_{n_k}(S_{n_k})$ converge $C^\infty$ on compact subsets (up to a subsequence) to a smooth minimal surface $S_\infty$ with $\partial_\infty(S_\infty)=\Gamma_\infty$. Hence the curvature of $T_{n_k}(S_{n_k})$ at the point $x_0$ converges to the curvature of $S_\infty$ at $x_0$. This contradicts the assumption that the curvature at the points $x_n$ goes to infinity.
\end{proof}

It follows that the curvature of $S$ is bounded by $-\delta^2<K_S<-\epsilon^2$, where $\delta$ is some constant, whereas we can take $\epsilon=1$. 

\begin{remark}
The main result of this section, Theorem \ref{hyperbolic close to identity}, is indeed a quantitative version of Lemma \ref{curvature bounded below}, which gives a control of how an optimal constant $\Lambda_0$ would vary if $K_0$ is chosen close to $0$.
\end{remark}

\subsection{Schauder estimates} \label{subsection schauder}

By using equation (\ref{hessian hyp}), we will eventually obtain bounds on the principal curvatures of $S$. For this purpose, we need bounds on $u=\sinh d_{\Hyp^3}(\cdot,P_-)$ and its derivatives. Schauder theory plays again an important role: since $u$ satisfies the equation
\begin{equation}
\Delta_S u-2u=0\,. \tag{\ref{lap 2u hyp}}
\end{equation} 
we will use uniform estimates of the form $$||u||_{C^2(B_0(0,\frac{R}{2}))}\leq C ||u||_{C^0(B_0(0,R))}$$ for the function $u$, written in a suitable coordinate system. The main difficulty is basically to show that the operators
$$u\mapsto \Delta_S u-2u$$
are strictly elliptic and have uniformly bounded coefficients.

\begin{prop} \label{schauder estimate hyp}
Let $K_0>1$ and $R>0$ be fixed. There exist a constant $C>0$ (only depending on $K_0$ and $R$) such that for every choice of:
\begin{itemize}
\item A minimal embedded disc $S\subset\Hyp^3$ with $\partial_\infty S$ a $K$-quasicircle, with $K\leq K_0$;
\item A point $x\in S$;
\item A plane $P_-$; 
\end{itemize}
the function $u(\bullets)=d_{\Hyp^3}(\bullets,P_-)$ expressed in terms of normal coordinates of $S$ centered at $x$, namely $$u(z)=\sinh d_{\Hyp^3}(\exp_x(z),P_-)$$
where $\exp_x:\R^2\cong T_{x}S\rar S$ denotes the exponential map, satisfies the Schauder-type inequality
\begin{equation} \label{schauder hyp}
||u||_{C^2(B_0(0,\frac{R}{2}))}\leq C ||u||_{C^0(B_0(0,R))}\,.
\end{equation} 
\end{prop}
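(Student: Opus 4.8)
\textbf{Proof plan for Proposition \ref{schauder estimate hyp}.} The strategy is the standard interior Schauder estimate for the elliptic equation \eqref{lap 2u hyp}, with the entire subtlety located in verifying that the coefficients of $\Delta_S$, expressed in normal coordinates centered at $x$, are uniformly elliptic and uniformly bounded in $C^{0,\alpha}$ over the whole family of surfaces $S$ and points $x$ allowed by the hypotheses. First I would fix $K_0>1$ and $R>0$, and recall from Lemma \ref{curvature bounded below} that every such $S$ has principal curvatures bounded by a universal $\Lambda_0=\Lambda_0(K_0)$, so its sectional curvature satisfies $-\delta^2<K_S<-1$ with $\delta=\delta(K_0)$. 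This two-sided curvature bound is what makes the normal-coordinate chart $\exp_x\colon B_0(0,R)\to S$ well-behaved: by comparison geometry (Rauch/Jacobi field estimates, using $|K_S|\le\delta^2$), the metric $g_S$ written in these coordinates as $g_{ij}(z)$ has eigenvalues pinched between two positive constants depending only on $\delta$ and $R$, and — crucially — its first derivatives $\partial_k g_{ij}$ are controlled, since the derivatives of Jacobi fields along geodesics are governed by $K_S$ and its covariant derivative. To get the $C^{0,\alpha}$ (indeed $C^\infty_{\mathrm{loc}}$) bound on the $g_{ij}$ that Schauder theory requires, I would invoke the compactness Lemma \ref{compactness harmonic maps} together with Remark \ref{remark compare conformal and geodesic balls}: the latter ensures that the conformal ball $B_0(0,\rho)\subset\D$ maps into a geodesic ball of controlled radius and vice versa, so that uniform $C^{k,\alpha}$ bounds on the conformal harmonic parametrizations $\sigma_n$ transfer to uniform bounds on the induced metric read in normal coordinates.

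With the coefficients under control, the operator $Lu:=\Delta_S u-2u=g^{ij}\partial_i\partial_j u+b^k\partial_k u-2u$ is strictly elliptic with ellipticity constants and $C^{0,\alpha}(B_0(0,R))$ coefficient norms bounded by a constant depending only on $K_0$ and $R$. The next step is a two-scale Schauder bootstrap entirely parallel to the one in the sketch of Lemma \ref{compactness harmonic maps}. Since $u=\sinh d_{\Hyp^3}(\exp_x(\cdot),P_-)$ is a smooth bounded solution of $Lu=0$, the interior Schauder estimate (\cite{giltrud}) on nested balls $B_0(0,R/2)\subset\!\subset B_0(0,3R/4)\subset\!\subset B_0(0,R)$ gives first
$$\|u\|_{C^{1,\alpha}(B_0(0,3R/4))}\le C_1\bigl(\|u\|_{C^0(B_0(0,R))}+\|Lu\|_{C^{0}(B_0(0,R))}\bigr)=C_1\|u\|_{C^0(B_0(0,R))}\,,$$
because $Lu\equiv 0$; and then, upgrading the zeroth-order term $-2u$ to a $C^{0,\alpha}$ right-hand side,
$$\|u\|_{C^{2,\alpha}(B_0(0,R/2))}\le C_2\bigl(\|u\|_{C^0(B_0(0,3R/4))}+\|{-2u}\|_{C^{0,\alpha}(B_0(0,3R/4))}\bigr)\le C\,\|u\|_{C^{0}(B_0(0,R))}\,,$$
where $C_1,C_2,C$ depend only on $K_0$ and $R$ through the ellipticity and coefficient bounds. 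In particular $\|u\|_{C^2(B_0(0,R/2))}\le C\|u\|_{C^0(B_0(0,R))}$, which is \eqref{schauder hyp}.

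\textbf{Main obstacle.} The genuinely delicate point is the uniformity of the chart: one needs that, across all admissible $(S,x)$, the normal-coordinate metric $g_{ij}$ and its derivatives up to the order needed for Schauder theory are bounded \emph{and} uniformly Hölder, and that the exponential radius $R$ is admissible (i.e. $\exp_x$ is a diffeomorphism on $B_0(0,R)$ with no conjugate points). The curvature upper bound $K_S<-1<0$ rules out conjugate points at any radius, so $\exp_x$ is always a diffeomorphism; the curvature lower bound $-\delta^2<K_S$ from Lemma \ref{curvature bounded below} gives the metric comparison; and the passage from the two-sided pointwise curvature bound to $C^{k,\alpha}$ control of $g_{ij}$ is where one must either differentiate the Gauss equation (bounding $\nabla B$, $\nabla^2 B$ via the Codazzi equation and elliptic regularity for the minimal surface system) or, more cleanly, quote the compactness of conformal harmonic parametrizations from Lemma \ref{compactness harmonic maps} to get $C^\infty_{\mathrm{loc}}$ bounds and then change coordinates. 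Once this uniformity is in place, the Schauder bootstrap itself is entirely routine.
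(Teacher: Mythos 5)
Your plan is correct in substance and ends with the same Schauder bootstrap, but it is organized quite differently from the paper, which never tries to establish the uniform coefficient bounds directly. The paper argues by contradiction: assuming surfaces $S_n$, points $x_n$ and planes $P_n$ with $||u_n||_{C^2(B_0(0,\frac{R}{2}))}\geq n\,||u_n||_{C^0(B_0(0,R))}$, it normalizes by isometries so that $x_n\mapsto x_0$ with a fixed tangent plane, uses compactness of $K_0$-quasiconformal maps (Theorem \ref{Compactness property of quasicircles}) to get Hausdorff convergence of the boundary quasicircles, then Lemma \ref{compactness harmonic maps} to get $C^\infty$ convergence of the conformal harmonic parametrizations, hence convergence of the coefficients of the operators $\Delta_{S_n'}-2$ in normal coordinates (Lemma \ref{curvature bounded below} and Remark \ref{remark compare conformal and geodesic balls} enter exactly as in your plan, to compare conformal and normal charts); uniform ellipticity and coefficient bounds along the sequence then give a uniform Schauder constant, contradicting the choice of $u_n$. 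Your direct route buys a cleaner, quantitative statement and avoids Theorem \ref{Compactness property of quasicircles} altogether, but two caveats should be addressed. First, Lemma \ref{compactness harmonic maps} as stated is only a sequential compactness result, so you cannot literally ``quote'' it for uniform $C^{k,\alpha}$ bounds; what you actually need are the uniform estimates inside its proof (which do hold for any conformal harmonic parametrization normalized by $\sigma(0)=x_0$, so you should state explicitly that you first move $x$ to a fixed basepoint $x_0$ by an ambient isometry), otherwise you are implicitly back to the paper's contradiction scheme. Second, your opening suggestion that Rauch/Jacobi comparison controls $\partial_k g_{ij}$ from the two-sided pointwise bound on $K_S$ alone does not work: derivative control of the metric in normal coordinates needs bounds on $\nabla K_S$ (equivalently on $\nabla B$), which Lemma \ref{curvature bounded below} does not provide; you correctly fall back on the parametrization-plus-change-of-coordinates route in your ``main obstacle'' paragraph, and that is the argument to keep, made quantitative as above.
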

\begin{proof}
This will be again an argument by contradiction, using the compactness property. 

 Suppose our assertion is not true, and find a sequence of minimal surfaces $S_n$ with $\partial_\infty(S_n)=\Gamma_n$ a $K$-quasicircle ($K\leq K_0$), a sequence of points $x_n\in S_n$, and a sequence of planes $P_n$, such that the functions $u_n(z)=\sinh d_{\Hyp^3}(\exp_{x_n}(z),P_n)$ have the property that
$$||u_n||_{C^2(B_0(0,\frac{R}{2}))}\geq n ||u||_{C^0(B_0(0,R))}\,.$$
We can compose with isometries $T_n$ of $\Hyp^3$ so that $T_n(x_n)=x_0$ for every $n$ and the tangent plane to $T_n(S_n)$ at $x_0$ is a fixed plane. Let $S_n'=T_n(S_n)$, $\Gamma_n'=T_n(\Gamma_n)$ and $P_n'=T_n(P_n)$. Note that $\Gamma_n'$ are again $K$-quasicircles, for $K\leq K_0$, and the convex hull of each $\Gamma_n'$ contains $x_0$. 

Using this fact, it is then easy to see - as in the proof of Lemma \ref{curvature bounded below} - that one can find $K_0$-quasiconformal maps $\Psi_n$ such that $\Psi_n(S^1)=\Gamma_n'$ and $\Psi_n(1)$, $\Psi_n(i)$ and $\Psi_n(-1)$ are at uniformly positive distance from one another.
Therefore, using Theorem \ref{Compactness property of quasicircles} there exists a subsequence of $\Psi_n$ converging uniformly to a $K_0$-quasiconformal map. This gives a subsequence $\Gamma_{n_k}'$ converging to $\Gamma_\infty'$ in the Hausdorff topology.

By Lemma \ref{compactness harmonic maps}, considering $S_n'$ as images of conformal harmonic embeddings $\sigma_n':\D\rar \Hyp^3$, we find a subsequence of $\sigma_{n_k}'$ converging $C^{\infty}$ on compact subsets to the conformal harmonic embedding of a minimal surface $S_\infty'$. Moreover, by Lemma \ref{curvature bounded below} and Remark \ref{remark compare conformal and geodesic balls}, the convergence is also $C^{\infty}$ on the image under the exponential map of compact subsets containing the origin of $\R^2$.


It follows that the coefficients of the Laplace-Beltrami operators $\Delta_{S_n'}$ on a Euclidean ball $B_0(0,R)$ of the tangent plane at $x_0$, for the coordinates given by the exponential map, converge to the coefficients of $\Delta_{S_\infty'}$. Therefore the operators $\Delta_{S_n'}-2$ are uniformly strictly elliptic with uniformly bounded coefficients. Using these two facts, one can apply Schauder estimates to the functions $u_n$, which are solutions of the equations $\Delta_{S_n'}(u_n)-2u_n=0$. See again \cite{giltrud} for a reference. We deduce that there exists a constant $c$ such that $$||u_n||_{C^2(B_0(0,\frac{R}{2}))}\leq c ||u_n||_{C^0(B_0(0,R))}$$ for all $n$, and this gives a contradiction.
\end{proof}

\subsection{Principal curvatures}

We can now proceed to complete the proof of Theorem \ref{hyperbolic close to identity}. Fix some $w>0$. We know from Section \ref{infinity} that if the Bers norm is smaller than the constant $(1/2)\tanh(w)$, then every point $x$ on $S$ lies on a geodesic segment $l$ orthogonal to two planes $P_-$ and $P_+$ at distance $d_{\Hyp^3}(P_-,P_+)<w$. Obviously the distance is achieved along $l$.

Fix a point $x\in S$. Denote again $u=\sinh d_{\Hyp^3}(\cdot,P_-)$. By Proposition \ref{schauder estimate hyp}, first and second partial derivatives of $u$ in normal coordinates on a geodesic ball $B_S(x,R/2)$ of fixed radius $R/2$ are bounded by 
$C||u||_{C^0(B_S(x,R))}$. 
The last step for the proof is an estimate of the latter quantity in terms of $w$.

We first need a simple lemma which controls the distance of points in two parallel planes, close to the common orthogonal geodesic. Compare Figure \ref{lemmaprojection}.

\begin{lemma} \label{lemma distance formulae}
Let $p\in P_-$, $q\in P_+$ be the endpoints of a geodesic segment $l$ orthogonal to $P_-$ and $P_+$ of length $w$. Let $p'\in P_-$ a point at distance $r$ from $p$ and let $d=d_{\Hyp^3}((\pi|_{P_+})^{-1}(p'),P_-)$. Then
\begin{gather} \label{trig hyp}
\tanh d=\cosh r \tanh w \\
\sinh d=\cosh r \frac{\sinh w}{\sqrt{1-(\sinh r)^2(\sinh w)^2}}\,. \label{expression sin distance width}
\end{gather}
\end{lemma}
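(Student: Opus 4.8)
The plan is to work in a suitable model of $\Hyp^3$ in which the geometry of the configuration becomes explicit, and then reduce the identities to elementary hyperbolic trigonometry. First I would set up coordinates adapted to the common perpendicular geodesic $l$: place $l$ along a geodesic, with $p \in P_-$ and $q \in P_+$ its endpoints, so that $P_-$ and $P_+$ are two totally geodesic planes both orthogonal to $l$, at distance $w$ apart. The cleanest choice is the hyperboloid model, where $P_-$ is dual to a point $p^\perp \in \dS^3$ and $P_+$ is dual to another point obtained from $p^\perp$ by flowing distance $w$ along the (dual) normal direction; alternatively one can use the upper half-space model with $l$ a vertical geodesic. I expect the half-space model or the hyperboloid model with a well-chosen orthonormal frame to make the computation shortest.

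The key geometric step is to understand the map $(\pi|_{P_+})^{-1}$, i.e. orthogonal projection onto $P_+$ followed by its inverse: starting from $p' \in P_-$ at distance $r$ from $p$, one drops the perpendicular from $p'$ to $P_+$, lands at some point $q' \in P_+$, and then $d = d_{\Hyp^3}(q', P_-)$ is the distance from that point back to $P_-$. I would identify the totally geodesic plane $\Pi$ containing $l$ and $p'$ (this is a copy of $\Hyp^2$, since $l \perp P_-$ forces $p'$, $p$, $q$ and the relevant perpendicular feet to be coplanar), so the whole problem lives in $\Hyp^2$. In $\Hyp^2$ the configuration is a quadrilateral with three right angles — a Lambert quadrilateral — with vertices $p$, $q$, $q'$, and the foot of the perpendicular from $p'$ to the line $P_+ \cap \Pi$; here $p'$ lies on the side $P_- \cap \Pi$ at distance $r$ from $p$, the side $pq$ has length $w$, and $d$ is the length of the fourth side. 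Then \eqref{trig hyp} is exactly the standard Lambert-quadrilateral relation $\tanh d = \cosh r \,\tanh w$, which I would either cite or derive in one line from the hyperbolic law of cosines / the right-angled formulas.

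For \eqref{expression sin distance width}, the plan is to convert \eqref{trig hyp} purely algebraically. From $\tanh d = \cosh r\,\tanh w$ one gets $\cosh^2 d = 1/(1-\tanh^2 d) = 1/(1-\cosh^2 r\,\tanh^2 w)$ and hence $\sinh^2 d = \cosh^2 d - 1 = \cosh^2 r\,\tanh^2 w/(1-\cosh^2 r\,\tanh^2 w)$. Multiplying numerator and denominator by $\cosh^2 w$ and using $\cosh^2 r\,\cosh^2 w - \cosh^2 r\,\sinh^2 w = \cosh^2 r$ together with $\cosh^2 w - \cosh^2 r\,\sinh^2 w = 1 + \sinh^2 w - \cosh^2 r\,\sinh^2 w = 1 - \sinh^2 r\,\sinh^2 w$, one obtains $\sinh^2 d = \cosh^2 r\,\sinh^2 w/(1 - \sinh^2 r\,\sinh^2 w)$, and taking the positive square root gives \eqref{expression sin distance width}. (One should note in passing that this requires $\sinh^2 r\,\sinh^2 w < 1$, which holds in the regime where the configuration is non-degenerate, i.e. where the perpendicular from $p'$ to $P_+$ still meets $P_+$; this will be automatic under the smallness hypotheses in which the lemma is applied.)

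The only genuine obstacle is the first identity \eqref{trig hyp}: one must correctly recognize that the four relevant points are coplanar and assemble the right trihedral/quadrilateral and then invoke the correct hyperbolic trigonometric formula without sign or hyperbolic-function errors. Once \eqref{trig hyp} is in hand, \eqref{expression sin distance width} is a mechanical algebraic manipulation. I would therefore spend the bulk of the argument carefully justifying the planarity and the Lambert-quadrilateral computation — most transparently by an explicit computation in the hyperboloid model using the inner-product distance formulas \eqref{formula distanza iperbolico} and \eqref{definition linear distance function} rather than by quoting synthetic hyperbolic trigonometry — and then close with the short algebra above.
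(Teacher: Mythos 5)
There is a genuine gap: you misidentify the point $q'=(\pi|_{P_+})^{-1}(p')$, which is the whole geometric content of the lemma. In the paper $\pi:\Hyp^3\to P_-$ is the orthogonal projection onto $P_-$ (this is how $\pi$ is defined where the lemma is used, and how $q'$ is described in the figure caption), so $q'$ is the point of $P_+$ whose perpendicular \emph{to $P_-$} has foot $p'$: it is obtained by erecting at $p'$ the geodesic orthogonal to $P_-$ and following it until it meets $P_+$, and $d$ is the length of that segment. You instead ``drop the perpendicular from $p'$ to $P_+$'', i.e.\ the segment orthogonal to $P_+$; that is a different point, and for it the claimed identities fail: one finds $\sinh d_{\Hyp^3}(p',P_+)=\cosh r\,\sinh w$ for the length of that dropped perpendicular, and the distance of its foot from $P_-$ is $\cosh r\,\sinh w\,\cosh w/\sqrt{1+\cosh^2 r\,\sinh^2 w}$, neither of which is \eqref{trig hyp} or \eqref{expression sin distance width}. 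Equivalently, in the correct Lambert quadrilateral with vertices $p$, $q$, $q'$, $p'$ the right angles are at $p$, $q$ and $p'$ and the acute angle is at $q'$; your configuration puts the right angle at $q'$, and your list of vertices is internally inconsistent (under your definition $q'$ coincides with ``the foot of the perpendicular from $p'$ to $P_+\cap\Pi$'', while $p'$ is not listed as a vertex). A telltale sign is the non-degeneracy condition you mention: with your construction the foot on $P_+$ always exists, whereas the true constraint is that the perpendicular to $P_-$ erected at $p'$ meets $P_+$ at all, i.e.\ $\cosh r\,\tanh w<1$, which is exactly the condition $\sinh r\,\sinh w<1$ visible in \eqref{expression sin distance width}.

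Apart from this misidentification, your plan is essentially the paper's proof: the paper computes explicitly in the hyperboloid model, taking $P_-=\{x_3=0\}$, $p=(0,0,0,1)$, parametrizing the geodesic $l_1$ orthogonal to $P_-$ through $p'$ and imposing $\langle l_1(d),l'(w)\rangle=0$ to detect the intersection with $P_+$, which yields $\tanh d=\cosh r\,\tanh w$ (and the reduction is indeed planar, as you anticipate). Your algebraic passage from \eqref{trig hyp} to \eqref{expression sin distance width} is correct and is precisely the step the paper calls straightforward. So the fix is confined to the key geometric step: carry out your hyperboloid computation with the perpendicular erected on $P_-$ at $p'$, not with the perpendicular dropped onto $P_+$.
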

\begin{proof}
This is easy hyperbolic trigonometry, which can actually be reduced to a 2-dimensional problem. However, we give a short proof for convenience of the reader. In the hyperboloid model, we can assume $P_-$ is the plane $x_3=0$, $p=(0,0,0,1)$ and the geodesic $l$ is given by $l(t)=(0,0,\sinh t,\cosh t)$. Hence $P_+$ is the plane orthogonal to $l'(w)=(0,0,\cosh w,\sinh w)$ passing through $l(w)=(0,0,\sinh w,\cosh w)$. The point $p'$ has coordinates $$p'=(\cos\theta\sinh r,\sin\theta\sinh r,0,\cosh r)$$ and the geodesic $l_1$ orthogonal to $P_-$ through $p'$ is given by
$$l_1(d)=(\cosh d)(p')+(\sinh d)(0,0,1,0)\,.$$
We have $l_1(d)\in P_+$ if and only if $\langle l_1(d),l'(w)\rangle=0$, which is satisfied for $$\tanh d=\cosh r \tanh w\,,$$ provided $\cosh r \tanh w<1$. The second expression follows straightforwardly.
\end{proof}

\begin{figure}[htbp]
\centering
\includegraphics[height=6.5cm]{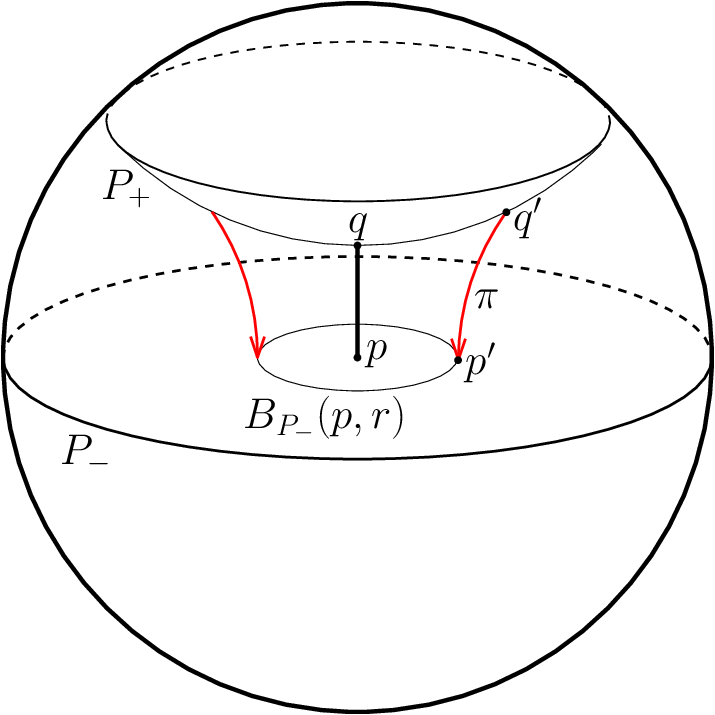}
\caption{The setting of Lemma \ref{lemma distance formulae}. Here $d_{\Hyp^3}(p,p')=r$ and $q'=(\pi|_{P_+})^{-1}(p')$.} \label{lemmaprojection}
\end{figure}

We are finally ready to prove Theorem \ref{hyperbolic close to identity quantitative}. The key point for the proof is that all the quantitative estimates previously obtained in this section are independent on the point $x\in S$.

\begin{reptheorem}{hyperbolic close to identity quantitative}
There exist constants $K_0>1$ and $C>4$ such that
the principal curvatures $\pm\lambda$ of every minimal surface $S$ in $\Hyp^3$ with $\partial_\infty S=\Gamma$ a $K$-quasicircle, with $K\leq K_0$, are bounded by:
\begin{equation} \label{stima finale hyp}
||\lambda||_\infty\leq\frac{C||\Psi||_\mathcal{B}}{\sqrt{1-C||\Psi||_\mathcal{B}^2}}
\end{equation}
\noindent where $\Gamma=\Psi(S^1)$, for $\Psi\in QD(\D)$.
\end{reptheorem}

\begin{proof}
Fix $K_0>1$. Let $S$ a minimal surface with $\partial_\infty S$ a $K$-quasicircle, $K\leq K_0$. Let $x\in S$ an arbitrary point on a minimal surface $S$. By Proposition \ref{computation width bers norm}, we find two planes $P_-$ and $P_+$ whose common orthogonal geodesic passes through $x$, and has length $w=\arctanh(2||\Psi||_{\mathcal{B}})$. 

 Now fix $R>0$.
By Proposition \ref{schauder estimate hyp}, applied to the point $x$ and the plane $P_-$, we obtain that the first and second derivatives of the function 
 $$u=\sinh d_{\Hyp^3}(\exp_x(\bullets),P_-)$$
 on a geodesic ball $B_S(x,R/2)$ for the induced metric on $S$, are bounded by the supremum of $u$ itself, on the geodesic ball $B_S(x,R)$, multiplied by a universal constant $C=C(K_0,R)$.

Let $\pi:\Hyp^3\rar P_-$ the orthogonal projection to the plane $P_-$. The map $\pi$ is contracting distances, by negative curvature in the ambient manifold. Hence $\pi(B_{S}(x,R))$ is contained in $B_{P_-}(\pi(x),R)$. Moreover, since $S$ is contained in the region bounded by $P_-$ and $P_+$, clearly $\sup\{u(x):x\in B_S(0,R)\}$ is less than the hyperbolic sine of the distance of points in $(\pi|_{P_+})^{-1}(B_{P_-}(\pi(x),R))$ from $P_-$. See Figure \ref{projectionhyp}.

\begin{figure}[htbp]
\centering
\includegraphics[height=6.5cm]{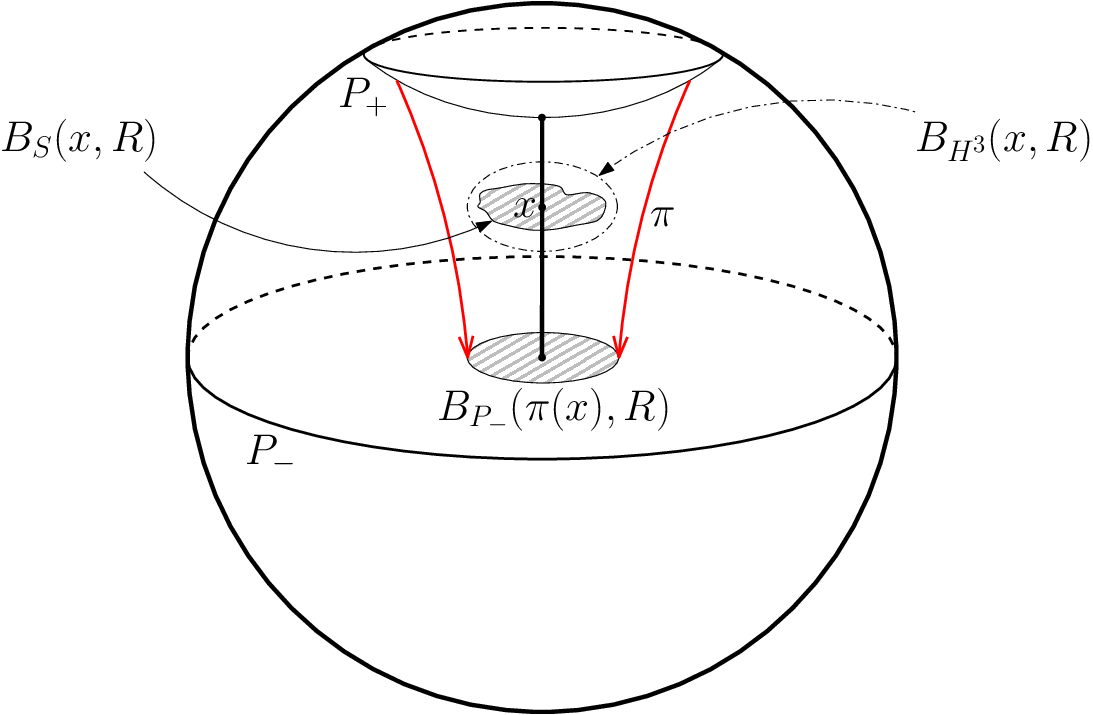}
\caption{Projection to a plane $P_-$ in $\Hyp^3$ is distance contracting. The dash-dotted ball schematically represents a geodesic ball of $\Hyp^3$.} \label{projectionhyp}
\end{figure}

Hence, using Proposition \ref{lemma distance formulae} (in particular Equation \eqref{expression sin distance width}), we get:
\begin{equation} \label{step norm c0 for u final proof}
||u||_{C^0(B_S(x,R))} \leq \cosh R \frac{\sinh w}{\sqrt{1-(\sinh R)^2(\sinh w)^2}}\,,
\end{equation}
where we recall that $w=\arctanh(2||\Psi||_{\mathcal{B}})$.

We finally give estimates on the principal curvatures of $S$, in terms of the complexity of $\partial_\infty(S)=\Psi(S^1)$. We compute such estimate only at the point $x\in S$; by the independence of all the above construction from the choice of $x$, the proof will be concluded. From Equation (\ref{hessian hyp}), we have $$B=\frac{1}{\sqrt{1+u^2-||\grad u||^2}}(\Hess\, u-u\,E)\,.$$
Moreover, in normal coordinates centered at the point $x$, the expression for the Hessian and the norm of the gradient at $x$ are just $$(\Hess u)_i^j=\frac{\partial^2 u}{\partial x^i \partial x^j}\,,\qquad\qquad||\grad u||^2=\left(\frac{\partial u}{\partial x^1}\right)^2+\left(\frac{\partial u}{\partial x^2}\right)^2\,.$$ 
It then turns out that the principal curvatures $\pm\lambda$ of $S$, i.e. the eigenvalues of $B$, are bounded by 
\begin{equation} \label{estimate princ curv bers norm step 1}
|\lambda|\leq \frac{C_1 ||u||_{C^0(B_S(x,R))}}{\sqrt{1-C_1||u||_{C^0(B_S(x,R))}^2}}\,.
\end{equation}
The constant $C_1$ involves the constant  $C$ of Equation \eqref{schauder hyp} in the statement of Proposition \ref{schauder estimate hyp}. 
Substituting Equation \eqref{step norm c0 for u final proof} into Equation \eqref{estimate princ curv bers norm step 1}, with some manipulation one obtains
\begin{equation}
||\lambda||_\infty\leq\frac{C_1(\cosh R)(\tanh w)}{\sqrt{1-(1+C_1)(\cosh R)^2(\tanh w)^2}}\,.
\end{equation}
On the other hand $\tanh w= 2||\Psi||_{\mathcal{B}}$. Upon relabelling $C$ with a larger constant, the inequality
$$||\lambda||_\infty\leq\frac{C||\Psi||_\mathcal{B}}{\sqrt{1-C||\Psi||_\mathcal{B}^2}}$$
is obtained.
\end{proof}

\begin{remark}
Actually, the statement of Theorem \ref{hyperbolic close to identity quantitative} is true for any choice of $K_0>1$ (and the constant $C$ varies accordingly with the choice of $K_0$). However, the estimate in Equation \eqref{stima finale hyp} does not make sense when $||\Psi||^2\geq 1/C$. Indeed, our procedure seems to be quite uneffective when the quasicircle at infinity is ``far'' from being a circle - in the sense of universal Teichm\"uller space. 
Applying Theorem \ref{thm bers emb locally bilipschitz}, this possibility is easily ruled out, by replacing $K_0$ in the statement of Theorem \ref{hyperbolic close to identity quantitative} with a smaller constant.
\end{remark}

Observe that the function $x\mapsto Cx/\sqrt{1-Cx^2}$ is differentiable with derivative $C$ at $x=0$. 
As a consequence of Theorem \ref{thm bers emb locally bilipschitz}, there exists a constant $L$ (with respect to the statement of Theorem \ref{thm bers emb locally bilipschitz} above, $L=1/b_1$) such that
 $||\Psi||_\mathcal{B}\leq Ld_\Teich([\Psi],[\mathrm{id}])$ if $d_\Teich([\Psi],[\mathrm{id}])\leq r$ for some small radius $r$. Then the proof of Theorem \ref{hyperbolic close to identity} follows, replacing the constant $C$ by a larger constant if necessary.
 
\begin{reptheorem}{hyperbolic close to identity}
There exist universal constants $K_0$ and $C$ such that every minimal embedded disc in $\Hyp^3$ with boundary at infinity a $K$-quasicircle $\Gamma\subset\partial_\infty\Hyp^3$, with $K\leq K_0$, has principal curvatures bounded by $$||\lambda||_\infty\leq C\log K\,.$$
\end{reptheorem}

\begin{remark}
With the techniques used in this paper, it seems difficult to give explicit estimates for the best possible value of the constant $C$ of Theorem \ref{hyperbolic close to identity}.
Indeed, in the proof of Theorem \ref{hyperbolic close to identity quantitative}, the constant which occurs in the inequality \eqref{stima finale hyp} depends on the choices of the bound $K_0$ on the maximal dilatation of the quasicircle, and on the choice of a radius $R$. The radius $R$ does not really have a key role in the proof, since the estimate on the principal curvatures is then used only for the point $x$ (in a manner which does not depend on $x$). However, the choice of $R$ is essentially due to the form of Schauder estimates, which provide a constant $C_{\mathrm{Sch}}$ such that 
$$||u||_{C^2(B_0(0,\frac{R}{2}))}\leq C_{\mathrm{Sch}} ||u||_{C^0(B_0(0,R))}\,,$$
where $C_{\mathrm{Sch}}$ depends on the radius $R$. Moreover, $C_{\mathrm{Sch}}$ depends on the bounds on the coefficient of the equation satisfied by $u$, which in our case is 
\begin{equation}
\Delta_S u-2u=0\,. \tag{\ref{lap 2u hyp}}
\end{equation} 
The bound on the coefficients of such equation, which depends on the Laplace-Beltrami operator of the minimal surface $S$, thus depends implicitly on the choice of $K_0$ (a compactness argument was used in this paper, in the proof of Proposition \ref{schauder estimate hyp}).
 Finally, the dependence on the constant $K_0$ appears again in the proof of Theorem \ref{hyperbolic close to identity}, when applying the fact that the Bers embedding is locally bi-Lipschitz (Theorem \ref{thm bers emb locally bilipschitz}). In fact, the local bi-Lipschitz constant depends on the chosen neighborhood of the identity in universal Teichm\"uller space.
\end{remark}

\section{Some applications and open questions} \label{application}

In this section we discuss the proofs of Theorem \ref{theorem uniqueness}, of Corollaries \ref{cor teich dist}, \ref{corollary teichmuller almost fuchsian} and \ref{corollary neighborhood}, and mention some related questions.

\subsection{Uniqueness of minimal discs}
We recall here Theorem \ref{theorem uniqueness}, which was stated in the introduction.

\begin{reptheorem}{theorem uniqueness}
There exists a universal constant $K'_0$ such that every $K$-quasicircle $\Gamma\subset\partial_\infty\Hyp^3$ with $K\leq K'_0$ is the boundary at infinity of a unique minimal embedded disc.
\end{reptheorem}

To prove Theorem  \ref{theorem uniqueness}, one applies the well-known fact that a minimal disc in $\Hyp^3$ with principal curvatures in $[-1+\epsilon,1-\epsilon]$ for some $\epsilon>0$ is the unique one with fixed boundary at infinity. Under this hypothesis, the curve at infinity is necessarily a quasicircle (one can adapt the argument of \cite[Lemma 3.3]{ghw}). For the convenience of the reader, we provide here a sketch of a proof which uses the tools of this paper.

\begin{lemma} \label{lemma uniqueness}
Let $S$ be a minimal embedded disc in $\Hyp^3$ with $\partial_\infty S=\Gamma$. If the principal curvatures of $S$ satisfy $||\lambda||_\infty<1$, then $S$ is the unique minimal disc with $\partial_\infty S=\Gamma$.
\end{lemma}
\begin{proof}[Sketch of proof]
Suppose $\Gamma$ is such that there exists two minimal surfaces $S$ and $S'$ with $\partial_\infty S=\partial_\infty S'=\Gamma$, and that the principal curvatures of $S$ are in $[-1+\epsilon,1-\epsilon]$. As observed after the proof of Lemma \ref{lemma equidistant surfaces hyp}, the  $\rho$-equidistant surfaces from $S$ give a foliation of a convex subset $\mathcal{C}$ of $\Hyp^3$, for $\rho\in (-\arctanh||\lambda||_\infty,\arctanh||\lambda||_\infty)$. By Corollary \ref{minimal surface contained convex hull}, the minimal surface $S'$ is also contained in $\mathcal{C}$.

Now, let $\rho_0$ the supremum of the value of $\rho$ on the minimal surface $S'$. If this supremum is achieved on $S'$, then the minimal surface $S'$ is tangent to the smooth surface $S_{\rho_0}$ at distance $\rho_0$ from $S$. But by Equation \eqref{formulae prin curv equi}, when $\rho>0$ the mean curvature of $S_\rho$ is negative (in our setting, a concave surface, for instance obtained for large positive $\rho$, has negative principal curvatures). Hence by the maximum principle, necessarily $\rho_0\leq 0$. 

If the supremum is not attained, let us pick a sequence of points $x_n\in S'$ such that the value of $\rho$ at $x_n$ converges to $\rho_0$ as $n\to\infty$. One can apply isometries $T_n$ of $\Hyp^3$ so that $x_n$ is mapped to a fixed point $x_0$. By the usual argument (see also Lemma \ref{curvature bounded below}), one can apply Theorem \ref{Compactness property of quasicircles} to ensure that the quasicircles $T_n(\Gamma)$ converge to a quasicircle $\Gamma_\infty$, and then Lemma \ref{compactness harmonic maps} to get the $C^\infty$ convergence on compact sets of the minimal discs $T_n(S')$ to a minimal disc $S'_\infty$ with $\partial_\infty S'_\infty=\Gamma_\infty$, up to a subsequence. Moreover, one can also assume that the minimal discs $T_n(S)$ converge to a minimal disc $S_\infty$. Indeed, consider the points $y_n$ on $S$ such that the geodesic of $\Hyp^3$ through $y_n$, perpendicular to $S$, contains $x_n$. The isometries $T_n$ map $y_n$ to a compact region of $y_n$ (as $d(x_0,T_n(y_n))=d(x_n,y_n)\leq \arctanh||\lambda||_\infty$), thus one can repeat the previous argument (first compose with isometries $R_n$ which map $T_n(y_n)$ to a fixed point $y_0$, and extract a subsequence of $R_n$ converging to an isometry $R_\infty$). By the $C^\infty$ convergence, the minimal surface $S_\infty$ still has principal curvatures in $[-1+\epsilon,1-\epsilon]$, and therefore one can repeat the argument of the previous paragraph, applied to $S_\infty$ and $S'_\infty$, to show that $\rho_0\leq 0$.

In the same way, one proves that the infimum of $\rho$ on $S'$ must be nonnegative, and thus $\rho$ must always be zero on $S'$. This proves that $S=S'$.
\end{proof}

The proof of Theorem \ref{theorem uniqueness} then follows from Lemma \ref{lemma uniqueness}. With respect to the constants $K_0$ and $C$ of Theorem \ref{hyperbolic close to identity}, by choosing some constant $K_0'<\min\{K_0,e^{1/C}\}$  one obtains  that every minimal embedded disc with boundary at infinity a $K$-quasicircle, with $K\leq K_0'$, has principal curvatures bounded by $||\lambda||_\infty<1$.

\subsection{Quasi-Fuchsian manifolds}

In this subsection we collect the applications of Theorem \ref{hyperbolic close to identity} to quasi-Fuchsian manifolds. A quasi-Fuchsian manifold is a Riemannian manifold isometric to $\Hyp^3/G$, where $G$ is subgroup of $\isom(\Hyp^3)$, which acts freely and properly discontinuously on $\Hyp^3$, isomorphic to the fundamental group of a closed surface $\Sigma$, and such that the limit set (i.e. the set of accumulation points in $\partial_\infty\Hyp^3$ of orbits of the action of $G$) is a quasicircle. The topology of a quasi-Fuchsian manifold is $\Sigma\times\R$, where $\Sigma$ is the closed surface. Therefore the results obtained in the previous sections hold for the universal cover $S=\tilde \Sigma_0$ of any closed minimal surface $\Sigma_0$ homotopic to $\Sigma\times\{0\}$.

Recall that Teichmüller space $\Teich(\Sigma)$ of a closed surface $\Sigma$ is the space of Riemann surface structures on $\Sigma$, considered up to biholomorphisms isotopic to the identity. In the same way, the classifying space for quasi-Fuchsian manifolds, which we denote by $\mathcal{QF}(\Sigma)$, is the space of quasi-Fuchsian metrics on $\Sigma\times\R$ up to isometries isotopic to the identity. By the celebrated Bers' Simultaneous Uniformization Theorem (\cite{bers}), $\mathcal{QF}(\Sigma)$ is parameterized by $\Teich(\Sigma)\times \Teich(\Sigma)$. The construction is as follows: since the limit set $\Lambda$ of $G$ is a Jordan curve, the complement of $\Lambda$ in $\partial_\infty\Hyp^3$ has two connected components $\Omega_+$ and $\Omega_-$ on which $G$ acts freely, properly discontinuously and by biholomorphisms. This construction thus provides two Riemann surface structures on $\Sigma$, namely the structures given by the quotients $\Omega_+/G$ and $\Omega_-/G$. Bers proved that these two Riemann surface structures, as points in $\Teich(\Sigma)$, can be prescribed and determine uniquely the quasi-Fuchsian structure in $\mathcal{QF}(\Sigma)$.

Finally, recall that the Teichm\"uller distance between two points of $\Teich(\Sigma)$, namely two Riemann surface structures $\mathcal{A}_1$ and $\mathcal{A}_2$ on $\Sigma$, is defined as:
$$d_{\Teich(\Sigma)}((\Sigma,\mathcal{A}_1),(\Sigma,\mathcal{A}_2))=\frac{1}{2}\inf_{f\sim\mathrm{id}}\log K(f)\,,$$
where $K(f)$ is the maximal dilatation of $f$ and the infimum is taken over all $f:(\Sigma,\mathcal{A}_1)\to(\Sigma,\mathcal{A}_2)$ quasiconformal and isotopic to the identity.

\begin{repcor}{cor teich dist}
There exist universal constants $C>0$ and $d_0>0$ such that, for every quasi-Fuchsian manifold $M=\Hyp^3/G$ with $d_{\Teich(\Sigma)}(\Omega_+/G,\Omega_-/G)<d_0$ and every minimal surface $S$ in $M$ homotopic to $\Sigma\times\{0\}$, the supremum of the principal curvatures of $S$ satisfies:
$$||\lambda||_\infty\leq C d_{\Teich(\Sigma)}(\Omega_+/G,\Omega_-/G)\,.$$
\end{repcor}

Corollary \ref{cor teich dist} follows directly from Theorem \ref{hyperbolic close to identity}. Indeed, let us choose $d_0=(1/2)\log K_0$. If the Teichm\"uller distance between $\Omega_+/G$ and $\Omega_-/G$ is less than $d_0$, then for every $d<d_0$, $d$ larger than the Teichm\"uller distance, one can obtain (by lifting to the universal cover) a $K$-quasiconformal map between $\Omega_+$ and $\Omega_-$ with $K=e^{2d}\leq K_0$. Thus the limit set $\Gamma$ is a $K$-quasicircle, with $K\leq K_0$. Thus by Theorem \ref{hyperbolic close to identity} the lift $S=\tilde\Sigma_0$ of any minimal surface in $M$ satisfies 
$$||\lambda||_\infty\leq C\log K=2Cd$$
Since the choice of $d$ was arbitrary, one obtains
$$||\lambda||_\infty\leq 2C d_{\Teich(\Sigma)}(\Omega_+/G,\Omega_-/G)$$
and the statement is concluded, replacing $C$ by $2C$.

Clearly, the simplest example of quasi-Fuchsian manifolds are Fuchsian manifolds, namely those quasi-Fuchsian manifolds which contain a totally geodesic (and thus minimal) surface homotopic to $\Sigma\times\{0\}$. The lift to $\Hyp^3$ of such surface is a totally geodesic plane, whose boundary at infinity is a circle. Fuchsian manifolds are parameterized by the induced metric on this totally geodesic surface, and thus the space $\mathcal{F}$ of Fuchsian metrics on $\Sigma\times\R$, up to isometry isotopic to the identity, is parameterized by $\Teich(\Sigma)$. As a subset of $\mathcal{QF}$, $\mathcal{F}$ is precisely the diagonal in  $\Teich(\Sigma)\times \Teich(\Sigma)$. 

It is easy to see that the totally geodesic surface in a quasi-Fuchsian manifold is the unique minimal surface. Although the uniqueness of the minimal surface in a quasi-Fuchsian manifold does not hold in general, there is a larger class of manifolds where uniqueness is guaranteed. According to the terminology in \cite{Schlenker-Krasnov}, we have the following definition of almost-Fuchsian manifolds:

\begin{defi}
A quasi-Fuchsian manifold is \emph{almost-Fuchsian} if it contains a minimal surface homotopic to $\Sigma\times\{0\}$ with principal curvatures in $(-1,1)$. 
\end{defi}

We will denote by $\mathcal{AF}(\Sigma)$ the subset of $\mathcal{QF}(\Sigma)$ of almost-Fuchsian manifolds.
Uhlenbeck in \cite{uhlenbeck} first observed that the minimal surface in an almost-Fuchsian manifold is unique. This follows also from the proof of Lemma \ref{lemma uniqueness}, in a simplified version for the compact case. A direct consequence of our results is the following:

\begin{repcor}{corollary teichmuller almost fuchsian}
If the Teichm\"uller distance between the conformal metrics at infinity of a quasi-Fuchsian manifold $M$ is smaller than a universal constant $d_0'$, then $M$ is almost-Fuchsian.
\end{repcor}

Indeed, in Corollary \ref{cor teich dist}, if the Teichm\"uller distance is small enough, then the principal curvatures are bounded by $1$ in absolute value. Finally, if we endow $\mathcal{QF}\cong\Teich(\Sigma)\times \Teich(\Sigma)$ by the 1-product metric, namely
$$d_{\Teich(\Sigma)\times\Teich(\Sigma)}((\mathcal{A}_1,\mathcal{A}'_1),(\mathcal{A}_2,\mathcal{A}'_2))=
d_{\Teich(\Sigma)}(\mathcal{A}_1,\mathcal{A}_2)+d_{\Teich(\Sigma)}(\mathcal{A}'_1,\mathcal{A}'_2)\,,$$
then Corollary \ref{corollary teichmuller almost fuchsian} can be restated by saying that if the distance of a point $(\Omega_+/G,\Omega_-/G)$ from the diagonal is less than $d_0'$, then the quasi-Fuchsian manifold determined by $(\Omega_+/G,\Omega_-/G)$ is almost-Fuchsian. We state this in Corollary \ref{corollary neighborhood} below.

\begin{repcor}{corollary neighborhood}
There exists a uniform neighborhood $N(\mathcal{F}(\Sigma))$ of the Fuchsian locus $\mathcal{F}(\Sigma)$ in $\mathcal{QF}(\Sigma)\cong \Teich(\Sigma)\times \Teich(\Sigma)$ such that $N(\mathcal{F}(\Sigma))\subset \mathcal{AF}(\Sigma)$.
\end{repcor}

\subsection{Further directions}

There is a number of questions left open on quasi-Fuchsian and almost-Fuchsian manifolds. In particular, the results presented in this paper hold for quasi-Fuchsian manifolds such that the two Riemann surfaces at infinity are \emph{close} in Teichm\"uller space. The understanding of the subset of almost-Fuchsian manifolds \emph{far} from the Fuchsian locus is far from being completed. More in general, it is an interesting and challenging problem to understand the geometric behavior of minimal discs in hyperbolic space with boundary at infinity a Jordan curve, especially when this Jordan curve becomes more exotic and phenomena of bifurcations occur.

The techniques of this paper, as observed in Remark \ref{remark width}, motivate towards a definition of \emph{thickness} or \emph{width} of the convex core of a quasi-Fuchsian manifold or, more in general, the convex hull of a quasicircle in $\partial_\infty \Hyp^3$. One might expect to find a relation between such notion of thickness and, for instance, the Teichm\"uller distance between the conformal ends of the quasi-Fuchsian manifold, or the maximal dilatation of the quasicircle. Again, it seems challenging to provide relations which hold far from the Fuchsian locus.



\clearpage
\bibliographystyle{alpha}
\bibliographystyle{ieeetr}
\bibliography{../bs-bibliography}

\end{document}